\DeclareMathOperator{\id}{id}
\DeclareMathOperator{\op}{op}
\DeclareMathOperator{\Mod}{\mathbf{Mod}}
\DeclareMathOperator{\fp}{fp}
\DeclareMathOperator{\End}{End}
\DeclareMathOperator{\Spec}{Spec}
\DeclareMathOperator{\rk}{rk}
\DeclareMathOperator{\Sym}{Sym}
\DeclareMathOperator{\sgn}{sgn}
\DeclareMathOperator{\coev}{coev}
\DeclareMathOperator{\ev}{ev}
\DeclareMathOperator{\Ind}{Ind}
\DeclareMathOperator{\Aff}{\mathbf{Aff}}
\DeclareMathOperator{\QCoh}{\mathbf{QCoh}}
\DeclareMathOperator{\fpqc}{\mathit{fpqc}}
\DeclareMathOperator{\Aut}{Aut}
\DeclareMathOperator{\colim}{colim}
\DeclareMathOperator{\Cat}{\mathbf{Cat}}
\DeclareMathOperator{\Gpd}{\mathbf{Gpd}}
\DeclareMathOperator{\Set}{\mathbf{Set}}
\newcommand{\ca}[1]{\mathscr{#1}}
\newcommand{\ten}[1]{\mathop{{\otimes}_{#1}}}
\newcommand{\boxten}[1]{\mathop{{\boxtimes}_{#1}}}
\newcommand{\pb}[1]{\mathop{{\times}_{#1}}}
\newcommand{\defl}{\mathrel{\mathop:}=}
\theoremstyle{plain}
\newtheorem{thm}{Theorem}[section]
\newtheorem{prop}[thm]{Proposition}
\newtheorem{lemma}[thm]{Lemma}
\newtheorem{cor}[thm]{Corollary}
\theoremstyle{definition}
\newtheorem{example}[thm]{Example}
\newtheorem{rmk}[thm]{Remark}
\newtheorem{dfn}[thm]{Definition}
\newtheorem{notation}[thm]{Notation}
\newtheoremstyle{citing}{}{}{\itshape}{}{\bfseries}{.}{ }{\thmnote{#3}}
\theoremstyle{citing}
\newtheoremstyle{citingdfn}{}{}{}{}{\bfseries}{.}{ }{\thmnote{#3}}
\theoremstyle{citingdfn}
\numberwithin{equation}{section}
\keywords{Adams stacks, weakly Tannakian categories, geometric tensor categories}
\subjclass[2000]{14A20, 16T05, 18D20}
\author{Daniel Sch\"appi}
\title{Which abelian tensor categories are geometric?}
\address{Department of Mathematics\\
Middlesex College\\
The University of Western Ontario\\
1151 Richmond Street\\
London, Ontario\\
Canada, N6A 5B7}
\begin{document}

\begin{abstract}
 For a large class of geometric objects, the passage to categories of quasi-coherent sheaves provides an embedding in the 2-category of abelian tensor categories. The notion of weakly Tannakian categories introduced by the author gives a characterization of tensor categories in the image of this embedding.
  
 However, this notion requires additional structure to be present, namely a fiber functor. For the case of classical Tannakian categories in characteristic zero, Deligne has found intrinsic properties --- expressible entirely within the language of tensor categories --- which are necessary and sufficient for the existence of a fiber functor. In this paper we generalize Deligne's result to weakly Tannakian categories in characteristic zero. The class of geometric objects whose tensor categories of quasi-coherent sheaves can be recognized in this manner includes both the gerbes arising in classical Tannaka duality and more classical geometric objects such as projective varieties over a field of characteristic zero.

 Our proof uses a different perspective on fiber functors, which we formalize through the notion of geometric tensor categories. A second application of this perspective allows us to describe categories of quasi-coherent sheaves on fiber products.
\end{abstract}

\maketitle

\tableofcontents

\section{Introduction}\label{section:introduction}

 In \cite{LURIE}, Lurie gave a characterization of the tensor functors between categories of quasi-coherent sheaves
\[
 \QCoh(Y) \rightarrow \QCoh(X)
\]
 on stacks $X$ and $Y$ which are induced by a morphism $f \colon X \rightarrow Y$. He called such tensor functors \emph{tame}. Brandenburg and Chirvasitu showed that if $Y$ is a quasi-compact quasi-separated scheme, then \emph{every} left adjoint tensor functor between the categories of quasi-coherent sheaves is tame (see \cite{BRANDENBURG_CHIRVASITU}). In a similar vein, the author showed that the same is true for categories of quasi-coherent sheaves on \emph{Adams stacks}. These are stacks on the $\fpqc$-site\footnote{Since this site is a large category, there are some set-theoretical difficulties when defining arbitrary sheaves and stacks on it. These can be circumvented in several ways, for example by using Grothendieck universes.} $\Aff_R$ associated to flat affine groupoids which also satisfy the strong resolution property: the quasi-coherent sheaves with duals form a generator of the category of all quasi-coherent sheaves. The gerbes associated to Tannakian categories are examples of such stacks, as are more classical geometric objects such as projective varieties and quasi-compact semi-separated schemes with the strong resolution property.

 Thus, we can embed a large class of geometric objects in the 2-category of tensor categories. This leads to a wealth of natural questions: what do various geometric constructions and properties correspond to in the world of tensor categories? Note that many questions of this kind have been studied before Lurie's result was known, in order to generalize classical concepts to noncommutative algebraic geometry.

 In \cite{SCHAEPPI_TENSOR}, the author has shown that products of Adams stacks correspond to the Kelly tensor product of categories (a tensor product which generalizes Deligne's tensor product of abelian categories, in the sense that the two coincide whenever the latter exists, see \cite{IGNACIO_TENSOR}). In \S \ref{section:fiber} we use similar ideas to the ones used in \cite{SCHAEPPI_TENSOR} to extend this result to fiber products of Adams stacks.

 \begin{thm}\label{thm:fiber_product}
  Let $X$, $Y$, and $Z$ be Adams stacks. Then there is an equivalence
\[
 \QCoh_{\fp}(X \pb{Z} Y) \simeq \QCoh_{\fp}(X) \boxten{\QCoh_{\fp}(Z)} \QCoh_{\fp}(Y) 
\]
 of symmetric monoidal $R$-linear categories.
 \end{thm}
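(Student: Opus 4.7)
The approach mirrors the proof of the absolute case (for ordinary products) in \cite{SCHAEPPI_TENSOR}, adapted to the relative setting. Write $f\colon X \to Z$ and $g\colon Y \to Z$ for the structural morphisms, and $p_X, p_Y$ for the projections out of the fiber product. The pullback tensor functors $p_X^*, p_Y^*$ together with the canonical 2-isomorphism $p_X^*f^* \cong p_Y^*g^*$ supplied by the fiber square provide precisely the data needed to invoke the universal property of the relative Kelly tensor product, giving a canonical comparison symmetric monoidal $R$-linear functor
\[
 \Phi\colon \QCoh_{\fp}(X) \boxten{\QCoh_{\fp}(Z)} \QCoh_{\fp}(Y) \longrightarrow \QCoh_{\fp}(X \pb{Z} Y).
\]
The content of the theorem is that $\Phi$ is an equivalence.

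To prove this, the plan is a 2-Yoneda argument. For an arbitrary Adams stack $T$, post-composition with $\Phi$ induces a functor between the categories of right exact symmetric monoidal $R$-linear functors into $\QCoh_{\fp}(T)$. On the source side, the universal property of $\boxten{\QCoh_{\fp}(Z)}$ identifies such functors with triples consisting of right exact tensor functors $u^*\colon \QCoh_{\fp}(X) \to \QCoh_{\fp}(T)$, $v^*\colon \QCoh_{\fp}(Y) \to \QCoh_{\fp}(T)$, and a balancing isomorphism $u^*f^* \cong v^*g^*$. On the target side, the characterization of tame tensor functors on Adams stacks recalled in the introduction identifies such functors with morphisms $T \to X \pb{Z} Y$, which by the universal property of the fiber product amount to the same triples. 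Naturality of this correspondence in $T$, combined with the fact that affine Adams stacks detect equivalences between finitely presentable $R$-linear tensor categories, will force $\Phi$ to be an equivalence.

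The main obstacle is twofold. First, one must know that $X \pb{Z} Y$ is itself an Adams stack, so that Tannaka recognition applies on the right-hand side; this is a nontrivial geometric point, requiring that the strong resolution property be preserved under fiber products of the relevant class of stacks. Second, the precise matching of coherence data needs care: the balancing isomorphism produced by the universal property of the relative Kelly tensor product must be canonically identified with the 2-isomorphism built into the fiber product of stacks, and this identification requires unpacking $\boxten{\QCoh_{\fp}(Z)}$ as a coequalizer (or coend) of Kelly tensor products in the 2-category of finitely cocomplete $R$-linear categories and verifying compatibility with the symmetric monoidal structures. Both difficulties should be tractable by adapting the constructions of \cite{SCHAEPPI_TENSOR}, with the geometric point about $\QCoh_{\fp}(X \pb{Z} Y)$ being the more delicate one.
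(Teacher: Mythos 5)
There is a genuine gap at the final step. Your reduction of both mapping categories to the same category of triples $(u^*,v^*,\,u^*f^*\cong v^*g^*)$ over each Adams stack $T$ is fine as far as it goes, but the concluding Yoneda argument does not close. You only test the comparison functor $\Phi\colon \ca{P}\defl\QCoh_{\fp}(X)\boxten{\QCoh_{\fp}(Z)}\QCoh_{\fp}(Y)\to\QCoh_{\fp}(X\pb{Z}Y)$ against objects of the form $\QCoh_{\fp}(T)$. For a bicategorical Yoneda argument in $\ca{RM}$ to force $\Phi$ to be an equivalence you must either test against \emph{all} of $\ca{RM}$ (which you cannot, since tameness only describes tensor functors landing in geometric categories), or you must already know that the source $\ca{P}$ --- the bicategorical pushout in $\ca{RM}$ --- is itself weakly Tannakian, so that both objects lie in a full sub-2-category on which the restricted Yoneda embedding is fully faithful. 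Otherwise you have no way to manufacture an inverse $\QCoh_{\fp}(X\pb{Z}Y)\to\ca{P}$: none of your identifications give any control over functors \emph{into} $\ca{P}$. That $\ca{P}$ is weakly Tannakian is the real content of the theorem, and your proposal never addresses it; the two obstacles you do flag (that $X\pb{Z}Y$ is an Adams stack, and the coherence bookkeeping) are the comparatively routine parts.

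The paper supplies exactly this missing ingredient. It writes $X\pb{Z}Y$ as the pullback of $f\times g$ along the diagonal $\Delta\colon Z\to Z\times Z$, which is affine for an Adams stack; it computes the pushout of tensor categories along the free-module functor $A\otimes-\colon\ca{C}\to\ca{C}_A$ explicitly as a module category $\ca{D}_{FA}$ (Proposition~\ref{prop:pushout_along_affine}); it shows that geometric categories are closed under cohomologically affine functors (Proposition~\ref{prop:closure_under_affine_functors}); and it shows that affine morphisms of stacks induce affine functors (Proposition~\ref{prop:affine_morphism_implies_affine_functor}). Only after these steps can one argue that the abstract pushout and $\QCoh(X\pb{Z}Y)$ are both pushouts inside the image of $\QCoh(-)$ and hence equivalent. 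If you want to retain your Yoneda framing, you must insert an argument that the pushout in $\ca{RM}$ of weakly Tannakian categories along these functors is again weakly Tannakian; that is where the work lies.
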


 Here $\QCoh_{\fp}$ denotes the full subcategory of finitely presentable quasi-coherent sheaves. The construction $\ca{A} \boxten{\ca{C}} \ca{B}$ is a generalization of a tensor product for finite tensor categories introduced by Greenough in \cite[Definition~3.5]{GREENOUGH} (see \S \ref{section:fiber} for a precise definition).

 These results about categories of quasi-coherent sheaves rely on an answer to the following more basic question: can we give a characterization of tensor categories which arise from algebro-geometric objects? 

 The notion of weakly Tannakian category introduced by the author (see \cite{SCHAEPPI_STACKS, SCHAEPPI_TENSOR}) gives such a characterization, where the geometric objects in question are Adams stacks over a fixed commutative ring $R$. Recall that a category is \emph{ind-abelian} if its category of ind-objects is abelian. There is also an intrinsic description of such categories (see \cite{SCHAEPPI_TENSOR}). Note that any abelian category is in particular ind-abelian. We call a symmetric monoidal category with finite colimits \emph{right exact} if tensoring with any fixed object gives a right exact functor.

\begin{dfn}\label{dfn:weakly_tannakian}
 Let $\ca{A}$ be an ind-abelian right exact symmetric monoidal $R$-linear category, and $B$ a commutative $R$-algebra. A functor
\[
 w \colon \ca{A} \rightarrow \Mod_B
\]
 is called a \emph{fiber functor} if it is faithful, flat, and right exact. 

 We say that $\ca{A}$ is \emph{weakly Tannakian} if it satisfies the conditions:
\begin{enumerate}
 \item[(i)] There exists a fiber functor $w \colon \ca{A} \rightarrow \Mod_B$ for some commutative $R$-algebra $B$;
\item[(ii)] For all objects $A \in \ca{A}$ there exists an epimorphism $A^{\prime} \rightarrow A$ such that $A^{\prime}$ has a dual.
\end{enumerate}
\end{dfn}

 See for example \cite[\S 3.1]{SCHAEPPI_TENSOR} for a definition of flat functors. If $\ca{A}$ is abelian, then a functor $w$ as above is flat if and only if it is left exact.
 
 One of the central results of \cite{SCHAEPPI_TENSOR} is that $\ca{A}$ is weakly Tannakian if and only if it is equivalent to the category $\QCoh_{\fp}(X)$ of finitely presentable quasi-coherent sheaves on an Adams stack $X$.

 Note, however, that this characterization relies not just on properties of the symmetric monoidal category $\ca{A}$, but also on the existence of additional structure (namely the fiber functor). The problem of finding intrinsic axioms for the existence of fiber functors was called the \emph{description problem} in \cite{WEDHORN} (as opposed to the \emph{recognition problem} for categories of quasi-coherent sheaves on Adams stacks, for which the notion of weakly Tannakian already provides a satisfactory answer).

 In \cite[\S 7]{DELIGNE}, Deligne solved the description problem for Tannakian categories over a field of characteristic zero. Recall that an abelian weakly Tannakian category over a field $k$ is called \emph{Tannakian} if there exists a fiber functor landing in $K$-vector spaces for some field extension $k \subseteq K$, if \emph{every} object has a dual, and the endomorphism ring of the unit object is equal to $k$. The Tannakian categories are precisely the categories of coherent sheaves on $\fpqc$-gerbes with affine band over $k$ (see \cite[Th\'eor\`eme~1.12 and \S 3.6]{DELIGNE}).

 Deligne uses exterior powers and traces of endomorphisms --- notions which make sense in any right exact symmetric monoidal category in characteristic zero --- in order to prove the following description result (see \cite[Th\'eor\`eme~7.1]{DELIGNE}). 

\begin{thm}[Deligne]\label{thm:deligne_description}
 Let $k$ be a field of characteristic zero. Let $\ca{A}$ be an abelian symmetric monoidal $k$-linear category such that the $k$-algebra of endomorphisms of the unit object is equal to $k$. Then $\ca{A}$ is Tannakian if and only if for every object $X \in \ca{A}$, there exists an integer $n \geq 0$ such that $\Lambda^n X=0$.
\end{thm}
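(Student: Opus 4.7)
The $(\Leftarrow)$ direction is immediate: given a fiber functor $w \colon \ca{A} \to \Vect_K$ with $K/k$ a field extension, $w$ is exact, faithful, symmetric monoidal, and hence preserves exterior powers. Every object in a Tannakian $\ca{A}$ has a dual, so $w(X)$ is finite-dimensional, say of dimension $d$ over $K$; then $w(\Lambda^{d+1}X) = \Lambda^{d+1}w(X) = 0$, and faithfulness gives $\Lambda^{d+1}X = 0$.

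For $(\Rightarrow)$, I would follow Deligne's strategy, organized around a dimension theory for objects. Using the $S_n$-action on $X^{\otimes n}$ (semisimple in characteristic zero), one first constructs duals for every object and then defines $\dim X := \mathrm{tr}(\id_X) \in \End(\mathbf{1}) = k$. The identity $\mathrm{tr}(\id_{\Lambda^n X}) = \binom{\dim X}{n}$, combined with the vanishing hypothesis, forces $\dim X$ to be a non-negative integer; parallel Schur-functor computations show that each generalized Schur-dimension is a non-negative integer and that $S_\lambda X = 0$ whenever $\lambda$ has more than $\dim X$ parts.

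Second, exploit this integrality and positivity to build the fiber functor. For each rigid $X$, the full tensor subcategory $\langle X \rangle \subseteq \ca{A}$ is a finitely generated abelian rigid tensor category, so (over an appropriate splitting field extension) it is equivalent to the representation category $\Rep_K(G_X)$ of an affine group scheme $G_X$ by classical Tannakian reconstruction. These local fiber functors are then assembled into a single functor $w \colon \ca{A} \to \Vect_K$ via an ultrafilter/filtered-limit construction on the diagram of finitely generated rigid tensor subcategories of $\ca{A}$.

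The main obstacle is the assembly step: coherently choosing a splitting field and mutually compatible fiber functors on all $\langle X \rangle$ simultaneously. This requires a compactness argument, together with the verification that the resulting $w$ remains exact and faithful. The characteristic zero hypothesis enters essentially throughout the argument via the semisimplicity of symmetric group representations, both to make the dimension and Schur-functor analysis work and to carry out the local Tannakian reconstructions.
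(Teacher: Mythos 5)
The backward implication and the first half of your forward argument are sound: the rank computation $\rk(\Lambda^n X)=\binom{\rk X}{n}$ together with $\Lambda^n X=0$ and $\End(I)=k$ a field of characteristic zero does force $\rk X$ to be an integer in $\{0,\dots,n-1\}$; this is exactly Deligne's Lemme 7.2, reproduced in this paper as Lemma~\ref{lemma:trace_of_exterior_power} and exploited in Lemma~\ref{lemma:integral_rank}. (Two small caveats: rigidity is part of Deligne's hypotheses --- his ``cat\'egorie tensorielle'' is rigid by definition --- so you neither need to nor can ``construct duals for every object'' from the stated conditions; and positivity of the rank is genuinely needed later, not just integrality.)

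The genuine gap is in your second step, which is circular. You propose to obtain a fiber functor on each finitely generated rigid subcategory $\langle X\rangle$ by ``classical Tannakian reconstruction,'' but reconstruction takes a fiber functor as \emph{input} and produces the group(oid) scheme; there is no classical result identifying an abstract finitely generated rigid abelian tensor category with $\Rep_K(G)$ absent a fiber functor --- that identification \emph{is} the theorem being proved, and it is no easier for $\langle X\rangle$ than for $\ca{A}$. Consequently the ``assembly'' step, which you flag as the main obstacle, is assembling objects you have not constructed. Deligne's actual proof (and its generalization in \S\ref{section:description} of this paper) produces the fiber functor by building an explicit faithfully flat commutative algebra $A$ in $\Ind(\ca{A})$: for each epimorphism $p\colon X\to I$ with $X$ rigid one forms the universal algebra $I\ten{\Sym(I)}\Sym(X^{\vee})$ splitting $p$ (Proposition~\ref{prop:splitting_epis}), and for each rigid $X$ of integral rank $n>0$ one forms $A\ten{\Sym_A(A)}\Sym_A(M\oplus M^{\vee})$ splitting off a trivial summand (Proposition~\ref{prop:unit_direct_summand}); the integrality and positivity of ranks, plus $\Lambda^{n+1}X=0$ and $\rk=0\Rightarrow X=0$, are used precisely to show these algebras are faithfully flat and that iterating yields $A\otimes X\cong A^{\oplus n}$. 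A filtered colimit of tensor products of such algebras is then a faithfully flat algebra $A$ which is a projective generator of $\ca{C}_A$, and the fiber functor is the free-module functor $A\otimes-$ followed by the equivalence $\ca{C}_A\simeq \Mod_B$ of Lemma~\ref{lemma:projective_generator_implies_affine}. Without some version of this internal-algebra construction (or another concrete mechanism for producing even one fiber functor on one $\langle X\rangle$), your outline does not yield a proof.
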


 We recall the definitions of exterior powers and traces in \S \ref{section:description}. For the moment it suffices to know that they coincide with the classical constructions if the symmetric monoidal category in question is the category of $B$-modules of a commutative $R$-algebra $B$, and that they are preserved by tensor functors.

 In \S \ref{section:description}, we will generalize Deligne's result in order to solve the description problem for weakly Tannakian $R$-linear categories if $R$ is a $\mathbb{Q}$-algebra. To do this we need to identify a few additional axioms which are either obvious or provable in the Tannakian case. 

 Let $\ca{A}$ be a weakly Tannakian category. Recall that the \emph{rank} of an object with dual is the trace of the identity morphism. Deligne uses the fact that in any category satisfying the conditions of Theorem~\ref{thm:deligne_description}, an object has zero rank if and only if it is zero. His proof does not generalize to the weakly Tannakian case, but it is clearly a necessary condition for the existence of a fiber functor: any tensor functor preserves rank, and a finitely generated projective module with rank zero must be equal to zero. This identifies one of the additional axioms that we require.

 The second axiom concerns epimorphisms $ p \colon X \rightarrow Y$ between objects with duals in $\ca{A}$. The fiber functor sends such an epimorphism to an epimorphism between finitely generated projective $B$-modules, hence to a split epimorphism. It follows that the kernel $K$ of $p$ has a dual as well, and that the dual sequence
\[
 \xymatrix{0 \ar[r] & Y^{\vee} \ar[r]^{p^{\vee}} & X^{\vee} \ar[r] & K^{\vee} \ar[r] & 0 }
\]
 is exact. This is in particular true if the target $Y$ is the unit object $I \in \ca{A}$. Note that this property clearly holds under the assumptions of Theorem~\ref{thm:deligne_description} since every object has a dual. For a general weakly Tannakian category this need not be the case. 

 In \S \ref{section:description}, we will see that the properties we identified are enough to ensure the existence of a fiber functor.

\begin{thm}\label{thm:description}
 Assume that $R$ is a  $\mathbb{Q}$-algebra. Let $\ca{A}$ be an ind-abelian $R$-linear right exact symmetric monoidal category. Suppose that $\ca{A}$ satisfies the following four conditions:
\begin{itemize}
 \item[(i)] For every object $X \in \ca{A}$, there exists an epimorphism $X^{\prime} \rightarrow X$ such that $X^{\prime}$ has a dual.

\item[(ii)] If $X \in \ca{A} $ has a dual and if $\rk(X)=0$, then $X=0$.

\item[(iii)] If $X \in \ca{A}$ has a dual, then there exists an integer $n \geq 0$ such that $\Lambda^n X=0$.
 
 \item[(iv)] If $X \in \ca{A}$ has a dual and $p \colon X \rightarrow I$ is an epimorphism, then the dual morphism $I \cong I^{\vee} \rightarrow X^{\vee}$ is a monomorphism whose cokernel has a dual. 
\end{itemize}
 Then $\ca{A}$ is weakly Tannakian. In particular, there exists an Adams stack $Y$ such that $\ca{A} \simeq \QCoh_{\fp}(Y)$.
\end{thm}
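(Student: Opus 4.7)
The plan is to reduce Theorem~\ref{thm:description} to Deligne's Theorem~\ref{thm:deligne_description} by first constructing a fiber functor on an abelian rigid tensor envelope of the subcategory of dualizable objects in $\ca{A}$, and then extending it to all of $\ca{A}$ using hypothesis~(i).

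Let $\ca{D} \subseteq \ca{A}$ denote the full subcategory of dualizable objects. This is a rigid symmetric monoidal $R$-linear category on which the hypotheses (ii), (iii), and (iv) are directly phrased. In general $\ca{D}$ need not itself be abelian (for instance, in $\QCoh_{\fp}(X)$ on a smooth projective variety, $\ca{D}$ consists of vector bundles), but the hypotheses suggest the existence of an abelian rigid envelope $\overline{\ca{D}}$: condition~(iv) supplies the exactness needed to form cokernels of morphisms into $I$, and via the tensor--hom adjunction $\Hom(X, Y) \cong \Hom(X \otimes Y^{\vee}, I)$ one should be able to transfer this to arbitrary morphisms in $\ca{D}$, while conditions (ii) and (iii) should persist under the envelope construction. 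After localization at a maximal ideal of $\End_{\ca{A}}(I)$ and passage to the residue field $k$ (necessarily of characteristic zero since $R$ is a $\mathbb{Q}$-algebra), Theorem~\ref{thm:deligne_description} then applies to $\overline{\ca{D}}_k$ and yields a fiber functor $w_0 \colon \overline{\ca{D}}_k \to \Vect_K$ for some field extension $k \subseteq K$.

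To extend $w_0$ to a functor $w \colon \ca{A} \to \Mod_K$, I would left Kan extend along $\ca{D} \hookrightarrow \ca{A}$. Hypothesis~(i), applied once to $A$ and once to the kernel of the resulting epimorphism $A_0 \to A$, presents every $A \in \ca{A}$ as a coequalizer of a pair of morphisms between dualizables, and $w(A)$ is defined as the coequalizer of the corresponding pair in $\Mod_K$. Symmetric monoidality and right exactness of $w$ follow from those of $w_0$ together with right exactness of $\otimes$ on $\ca{A}$; faithfulness uses rank preservation by tensor functors together with condition~(ii) (a finitely generated projective of zero rank vanishes); flatness follows from condition~(iv) via the dual presentation of objects.

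The main obstacle is the construction of the abelian envelope $\overline{\ca{D}}$ and the verification that hypotheses (ii) and (iii) propagate to it. Condition~(iv) directly controls only morphisms into $I$, and the reduction of a general morphism in $\ca{D}$ to the $I$-target case via the tensor--hom adjunction requires careful management of which Schur functors appear, so as to ensure that the exterior-power vanishing hypothesis persists. This step is the natural analogue in our setting of the existence of abelian envelopes of rigid tensor categories in characteristic zero (in the spirit of Deligne's arguments), and is where the bulk of the technical work of the proof is likely to sit. A secondary technicality is to make the extended $w$ land in an $R$-algebra $B$ rather than merely in a field extension of a residue field, which should be handled by choosing the maximal ideal of $\End_{\ca{A}}(I)$ compatibly and descending.
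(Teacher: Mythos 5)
There is a genuine gap, and it is fatal to the overall strategy rather than a repairable technicality. Your plan is to invoke Deligne's Theorem~\ref{thm:deligne_description} on a rigid abelian envelope $\overline{\ca{D}}$ of the dualizable objects, obtaining a fiber functor into $\Vect_K$ for a field $K$, and then Kan extend to $\ca{A}$. But a fiber functor on $\ca{A}$ with values in vector spaces over a field can exist only when the associated stack is (an affine gerbe over) a point, which is precisely the Tannakian case the theorem is meant to go beyond. Take $\ca{A}=\Coh(\mathbb{P}^1_{\mathbb{Q}})$, which satisfies (i)--(iv): by Corollary~\ref{cor:faithfully_flat_coverings_vs_algebras} a faithful, flat, right exact tensor functor $\Ind(\ca{A})\rightarrow \Vect_K$ would correspond to a faithfully flat affine algebra with $\ca{C}_A\simeq\Vect_K$, i.e.\ to a faithfully flat affine morphism $\Spec(K)\rightarrow\mathbb{P}^1$, which does not exist. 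Concretely, any functor built from $\ca{D}$ (the vector bundles) by Kan extension through a point, such as the generic fibre, kills the torsion sheaves, so faithfulness fails; and your argument for faithfulness via ``rank zero implies zero'' only addresses dualizable objects, whereas the objects that get killed (torsion sheaves) are exactly the non-dualizable ones of rank zero. Note also that the preliminary step is already problematic: for a projective variety the dualizable objects do \emph{not} admit an abelian envelope in which every object stays dualizable (any abelian category receiving $\ca{D}$ and generated by it in the relevant sense is $\Coh(X)$, which is not rigid), so the hypotheses of Deligne's theorem cannot be arranged. Adjusting the choice of maximal ideal of $\End(I)$, as you suggest, does not help, since $\End(I)$ is already a field in this example.

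The paper's proof avoids this by never passing through a pointwise fiber functor. Working in the geometric category $\ca{C}=\Ind(\ca{A})$, it imitates Deligne's two \emph{constructions} rather than his theorem: first a faithfully flat commutative algebra $A$ that universally splits all epimorphisms $X\rightarrow I$ with $X$ dualizable (condition (iv) is used to verify faithful flatness via Lemma~\ref{lemma:faithfully_flat_condition}), making $A$ projective as an $A$-module; second a faithfully flat algebra $A^{\prime}$ that universally trivializes a generating set of dualizable objects of integral rank (conditions (ii) and (iii), plus the padding argument of Lemma~\ref{lemma:integral_rank}), making $A^{\prime}$ a generator of $\ca{C}_{A^{\prime}}$. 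The tensor product $A\otimes A^{\prime}$ is then a faithfully flat affine algebra, and the resulting covering $\ca{C}\rightarrow\ca{C}_{A\otimes A^{\prime}}\simeq\Mod_B$ is the fiber functor; the ring $B$ is genuinely not a field in the non-gerbe examples, which is exactly what your reduction cannot produce.
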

 
 Both our result about quasi-coherent sheaves on fiber products (Theorem~\ref{thm:fiber_product}) and the above description result rely on a slightly different perspective on fiber functors involving the category $\Ind(\ca{A})$ of ind-objects in $\ca{A}$. We call categories of this form \emph{geometric tensor categories}, and we develop their basic theory in \S \ref{section:geometric}. We use this new perspective in \S \ref{section:affine} to study functors corresponding to affine morphisms between Adams stacks.

 We give a proof of Theorem~\ref{thm:fiber_product} in \S \ref{section:fiber}, and we prove our description theorem (Theorem~\ref{thm:description}) in \S \ref{section:description}.

 We end with two applications of our results to illustrate how they can be applied to the study of the duality between stacks and tensor categories. In \S \ref{section:etale} we use Theorem~\ref{thm:fiber_product} to give a description of finite \'etale morphisms between Adams stacks in terms of projective separable algebras in the category of quasi-coherent sheaves. As one of the referees pointed out, this can also be proved more directly (and for a broader class of stacks) using different methods. As such it should be seen as a proof of concept that results along this line \emph{can} be proved using the duality between stacks and tensor categories.
  
 The second result on the other hand is a genuine application of the techniques developed in this paper. In \S \ref{section:complete} we use the above description theorem to show that the 2-category of Adams stacks over a commutative ring $R$ containing $\mathbb{Q}$ is bicategorically complete.

 Throughout, we fix a commutative ring $R$. We write $\ca{AS}$ for the 2-category of Adams stacks over $R$, and $\ca{RM}$ for the 2-category of right exact symmetric monoidal $R$-linear categories (that is, finitely cocomplete symmetric monoidal $R$-linear categories with the property that tensoring with any fixed object gives a right exact functor), and right exact symmetric strong monoidal functors between them.
 
\section*{Acknowledgments}

 I want to thank Martin Brandenburg for pointing out several corrections and for suggesting some improvements of the exposition. The results about pullbacks along affine maps (Corollary~\ref{cor:pushout_along_affine_functor}) as well as the description of such (Proposition~\ref{prop:pushout_along_affine}) were independently discovered by him, and will appear in his PhD thesis.
 
  I thank both referees for carefully reading the manuscript. Their comments improved the readability in several places and simplified some of the proofs.
\section{Geometric and pre-geometric tensor categories}\label{section:geometric}

 In this section we introduce the notions of geometric and pre-geometric tensor categories. Most of the material presented here is already present in \cite{SCHAEPPI_STACKS} and \cite{SCHAEPPI_TENSOR}, though the focus will be on the category of ind-objects (rather than its subcategory of finitely presentable objects).

 Unless specified otherwise, all categories and functors we consider are $R$-linear. We first review some categorical terminology and fix some notation.

\subsection{Categorical background and notation}
 
 Throughout, we denote the hom-set of a category $\ca{C}$ between two objects $A$ and $B$ by $\ca{C}(A,B)$. We mostly deal with $R$-linear categories, which means that the hom-sets are endowed with $R$-module structures, and that composition is $R$-bilinear.
 
 Left adjoint functors are usually denoted by $F$, right adjoints by $U$ (to remind the reader of the free-forgetful adjunction in algebraic contexts). We sometimes write $F \dashv U$ to indicate that $F$ is left adjoint to $U$. The unit and counit (that is, the morphism corresponding to the respective identity morphisms under the adjunction) are always denoted by $\eta_A \colon A \rightarrow UF(A)$ and $\varepsilon_A \colon FU(A) \rightarrow A$. A functor which preserves all small limits (respectively all small colimits) is called \emph{continuous} (respectively \emph{cocontinuous}). In particular, a left adjoint is always cocontinuous. A category which has all small limits (colimits) is called \emph{complete} (\emph{cocomplete}).

 In a large class of categories, cocontinuity of a functor is equivalent to the existence of a right adjoint (results along those lines are known as adjoint functor theorems). This is in particular the case if both categories are \emph{locally finitely presentable}, which will be the case for the categories we consider. Recall that an object $C$ of a category $\ca{C}$ is called \emph{finitely presentable} if the representable functor $\ca{C}(C,-) \colon \ca{C} \rightarrow \Set$ preserves filtered colimits. The category $\ca{C}$ is \emph{locally finitely presentable} if there exists a strong generating sets of finitely presentable objects. A category that is both locally finitely presentable and abelian is in particular Grothendieck abelian.

 A category $\ca{C}$ is called \emph{symmetric monoidal} if it is equipped with a \emph{tensor product} functor $\ca{C} \times \ca{C} \rightarrow \ca{C}$ (denoted by $-\otimes -$), a unit object $I \in \ca{C}$, and coherent natural isomorphisms $(A\otimes B) \otimes C \cong A \otimes (B\otimes C)$, $A \otimes I \cong A \cong I \otimes A$, and a symmetry isomorphism $s_{A,B} \colon A \otimes B \cong B\otimes A$. These data are subject to various axioms, for example a pentagon diagram that relates the two ways of moving between the various bracketings of four objects.

 A functor $F \colon \ca{C} \rightarrow \ca{C}^{\prime}$ between symmetric monoidal categories is called a \emph{symmetric (lax) monoidal functor} if it is equipped with a morphism $\varphi_0 \colon I \rightarrow FI$ and natural morphisms $\varphi_{A,B} \colon FA \otimes FB \rightarrow F(A\otimes B)$, subject to compatibility conditions with the structure morphisms of a symmetric monoidal category. If $\varphi_0$ and all the $\varphi_{A,B}$ are invertible, the functor $F$ is called \emph{symmetric strong monoidal}. If we want to emphasize the dependence of the structure morphisms on the functor $F$, we will write $\varphi_0^F$ and $\varphi^F_{A,B}$ instead of $\varphi_0$ and $\varphi_{A,B}$.

 When a category is both symmetric monoidal and locally finitely presentable, we also require that the two structures are compatible in the following sense (cf.\ \cite{KELLY_FINLIM}). A \emph{locally finitely presentable symmetric monoidal category} is a category $\ca{C}$ that is both locally finitely presentable and symmetric monoidal, the unit object is finitely presentable, and finitely presentable objects are closed under the tensor product.
 
 Similarly, when a category is both $R$-linear and symmetric monoidal, we demand that these two structures are compatible, that is, we demand that the tensor product functor is $R$-bilinear.

 When dealing with stacks and categories equipped with various algebraic structures, we are inevitably lead into the world of two-dimensional category theory: in both cases there is a natural notion of ``morphism between morphisms.'' A \emph{(strict) 2-category} $\ca{K}$ has for each pair of objects $A$, $B$ a \emph{category} $\ca{K}(A,B)$ (rather than a mere set), and composition is given by a \emph{functor} $\ca{K}(B,C) \times \ca{K}(A,B) \rightarrow \ca{K}(A,C)$ (subject to the usual associativity and identity conditions). The morphisms in a 2-category are called \emph{1-cells}, and the morphisms between 1-cells are the \emph{2-cells}. When this terminology is used, the objects are often referred to as \emph{0-cells}.  The primordial example of a strict 2-category is the 2-category of small categories, with 1-cells the functors and 2-cells the natural transformations.

 The notion of a strict 2-category can be weakened to obtain the notion of a \emph{bicategory}, where the axioms only hold up to coherent isomorphisms. While we have no need of this level of generality, the notion of limit and colimit in 2-categories we will consider is the weak one. For example, strict colimits in a 2-category are defined from limits in the category of small categories via a natural \emph{isomorphism}
\[
 \ca{K}(\colim D_i,A) \cong  \lim \ca{K}(D_i,A)
\]
 of categories, subject to naturality conditions. A \emph{bicategorical colimit} or \emph{bicolimit} is instead defined via an \emph{equivalence}
\[
 \ca{K}(\colim D_i,A) \simeq  \lim \ca{K}(D_i,A)
\]
 of categories. These are only unique up to equivalence (not up to isomorphism). Moreover, rather than considering universal properties with strictly commuting diagrams, we will mostly be interested in diagrams that commute up to coherent isomorphisms, and objects which are universal among such. The main example we consider are \emph{bicategorical pushouts} (respectively pullbacks), which are universal among squares which commute up to a specified isomorphism (not subject to any further conditions).


 The general theory of such colimits ``up to coherent isomorphism'' can be made precise using the notion of \emph{weighted} colimits, which is discussed in detail in \cite{KELLY_BASIC} (under the name of indexed colimit). The relevant bicategorical version can be found in \cite{STREET_FIBRATIONS, STREET_CORRECTIONS}. For most results in this paper no knowledge of general weighted colimits is required.

\subsection{Definitions and basic properties}

\begin{dfn}\label{def:tensor_category}
 Let $\ca{C}$ be a locally finitely presentable symmetric monoidal $R$-linear category. If $\ca{C}$ is closed, we call it a \emph{tensor category}. As mentioned above, the unit object of a tensor category is thus always assumed to be finitely presentable. A functor
 \[
 F \colon \ca{C} \rightarrow \ca{D}
 \]
 between two tensor categories is called a \emph{tensor functor} if it is symmetric strong monoidal and if it has a right adjoint (equivalently, if it preserves colimits). We write $\ca{T}$ for the 2-category of tensor categories, tensor functors, and symmetric monoidal natural transformations between them.
\end{dfn}

\begin{rmk}
 Note that there are various definitions of tensor categories in the literature (as opposed to the term monoidal category, which is unambiguous). For us, a tensor category will in particular always be cocomplete and closed. 
\end{rmk}

\begin{dfn}\label{def:pregeometric_category}
 A tensor category $\ca{C}$ is called \emph{pre-geometric} if it is abelian and the objects with duals form a generator of $\ca{C}$. In particular, $\ca{C}$ is a Grothendieck abelian category.
\end{dfn}

 Recall that a set $\ca{G}$ of objects in a category $\ca{C}$ is called \emph{dense generator} if every object $M$ of $\ca{C}$ is the colimit of the following canonical diagram associated to $M$. The objects of the indexing category are the morphisms with target $M$ whose domain lies in $\ca{G}$. The morphisms of the indexing category are the morphisms in $\ca{C}$ that make the evident triangle commutative. Finally, the diagram in $\ca{C}$ is obtained by sending a morphism (that is, an object in this indexing category) to its domain.

\begin{rmk}\label{rmk:duals_dense}
 Day and Street showed that a set of objects forms a generator of a Grothendieck category $\ca{C}$ if and only if it is a dense generator of $\ca{C}$ (see \cite[Theorem~2 and Example~(3)]{DAY_STREET_GENERATORS}).  
\end{rmk}

\begin{rmk}\label{rmk:flat_resolutions_hence_tame}
 In any pre-geometric category, there exist enough flat resolutions to set up a good theory of Tor functors. Thus pre-geometric categories are tame in the sense of Lurie (see \cite[Remark~5.3]{LURIE} and the proof of Lemma~\ref{lemma:flatness_from_filtration}).
\end{rmk}

 Recall that $\ca{RM}$ denotes the 2-category of right exact symmetric monoidal categories and right exact symmetric strong monoidal functors between them. The passage to ind-objects (respectively the restriction to finitely presentable objects) sets up a close relationship between 2-categories $\ca{RM}$ and $\ca{T}$. Note, however, that restriction to finitely presentable objects is \emph{not} functorial, since there is no requirement that tensor functors need to preserve such. This problem does not arise in the case of pre-geometric categories.

\begin{lemma}\label{lemma:pregeometric_implies_finitary}
 Let $\ca{D}$ be a pre-geometric category. Then every tensor functor with domain $\ca{D}$ sends finitely presentable objects to finitely presentable objects. In particular, there is an equivalence
\[
 \ca{RM}(\ca{D}_{\fp},\ca{A}) \simeq \ca{T}(\ca{D},\Ind(\ca{A})) 
\]
 of categories which is natural in the right exact symmetric monoidal category $\ca{A}$.
\end{lemma}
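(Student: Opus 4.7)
The plan is to prove the two claims in turn. For the first claim---that any tensor functor $F \colon \ca{D} \to \ca{C}$ out of a pre-geometric category preserves finitely presentable objects---I would argue equivalently that its right adjoint $U$ preserves filtered colimits. The essential input is that any dualizable object $X$ in a tensor category is finitely presentable: using closedness of $\ca{C}$ and the fact that the internal hom out of a dualizable object is given by tensoring with its dual, the natural identification $\ca{C}(X, -) \cong \ca{C}(I, X^{\vee} \otimes -)$ exhibits $\ca{C}(X,-)$ as the cocontinuous functor $X^{\vee} \otimes -$ followed by $\ca{C}(I,-)$, which preserves filtered colimits since the unit of a tensor category is finitely presentable by definition. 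Since tensor functors are symmetric strong monoidal, they preserve dualizability, and hence $FD$ is finitely presentable in $\ca{C}$ for every dualizable $D \in \ca{D}$.

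I would then verify that $U$ preserves filtered colimits by showing that the canonical comparison map $\colim U M_i \to U \colim M_i$ becomes an isomorphism after applying $\ca{D}(D, -)$ for every dualizable $D$. Using that $D$ is finitely presentable in $\ca{D}$ on one side and that $FD$ is finitely presentable in $\ca{C}$ on the other, together with the adjunction $F \dashv U$, both sides reduce to $\colim \ca{C}(FD, M_i)$. Since dualizable objects form a dense generator by Remark~\ref{rmk:duals_dense}, this suffices to conclude that the comparison map is an isomorphism, and hence that $F$ preserves finite presentability.

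For the second claim, I would send $F \in \ca{T}(\ca{D}, \Ind(\ca{A}))$ to the restriction $F|_{\ca{D}_{\fp}}$, which by the first claim lands in $\Ind(\ca{A})_{\fp} \simeq \ca{A}$; the equivalence $\Ind(\ca{A})_{\fp} \simeq \ca{A}$ holds because objects of $\ca{RM}$ are additive with cokernels, hence Cauchy complete. In the opposite direction, I would left Kan extend $G \in \ca{RM}(\ca{D}_{\fp}, \ca{A})$ along $\ca{D}_{\fp} \hookrightarrow \ca{D} \simeq \Ind(\ca{D}_{\fp})$ and postcompose with $\ca{A} \hookrightarrow \Ind(\ca{A})$; this is cocontinuous by the universal property of $\Ind$, and it inherits a symmetric strong monoidal structure from $G$ using that the tensor product on $\ca{D}$ is determined by cocontinuity from its restriction to $\ca{D}_{\fp}$. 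The main obstacle is the careful transfer of the symmetric monoidal structure under this extension and the verification that the two assignments are mutually quasi-inverse and natural in $\ca{A}$; both follow from standard universal-property arguments once one checks that the structure maps $FA \otimes FB \cong F(A \otimes B)$ extend compatibly along filtered colimits in each variable.
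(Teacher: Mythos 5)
Your proof is correct, but it routes the first claim differently from the paper. The paper's argument is very short: since the objects with duals form a dense generator of the Grothendieck abelian category $\ca{D}$ (Remark~\ref{rmk:duals_dense}), every finitely presentable object of $\ca{D}$ is a cokernel of a morphism between objects with duals; tensor functors preserve duals and cokernels, duals are finitely presentable (for the reason you give, namely $\ca{C}(X,-)\cong\ca{C}(I,X^{\vee}\otimes-)$ and finite presentability of the unit), and finite colimits of finitely presentable objects are finitely presentable. You instead prove the equivalent statement that the right adjoint $U$ is finitary, by checking the comparison map $\colim UM_i\to U\colim M_i$ against the dense generator of duals; this uses exactly the same two ingredients (duals are finitely presentable in any tensor category, and they are dense in $\ca{D}$) but packages them as a representability argument rather than a presentation of finitely presentable objects. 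The paper's version is more economical and yields the reusable structural fact that every finitely presentable object of a pre-geometric category is a cokernel of a map between duals, whereas your version makes explicit the (equivalent) finitariness of $U$, which is also invoked later (e.g.\ in the proof of Proposition~\ref{prop:cohomologically_affine_implies_affine}). Your treatment of the second claim --- restriction versus monoidal left Kan extension along $\ca{D}_{\fp}\hookrightarrow\ca{D}\simeq\Ind(\ca{D}_{\fp})$, with the observation that $\Ind(\ca{A})_{\fp}\simeq\ca{A}$ because additive categories with cokernels are idempotent complete --- is the intended one; the paper leaves it implicit, deferring the monoidal structure on $\Ind(\ca{A})$ to Day's work as cited in the proof of Proposition~\ref{prop:geometric_equals_weakly_Tannakian}.
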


\begin{proof}
 Since the objects with duals form a dense generator of the Grothendieck abelian category $\ca{D}$ (see Remark~\ref{rmk:duals_dense}), every finitely presentable object can be written as cokernel of a morphism between objects with duals. The claim follows from the fact that tensor functors preserve duals. 
\end{proof}

\begin{dfn}
 Let $\ca{C}$ be a tensor category over $R$ and let $B$ be a commutative $R$-algebra. A tensor functor $w\colon \ca{C} \rightarrow \Mod_B$ is called a \emph{faithfully flat covering} if $w$ is faithful and exact. 

 A pre-geometric tensor category $\ca{C}$ is called \emph{geometric} if there exists a commutative $R$-algebra $B$ and a faithfully flat covering $\ca{C}\rightarrow \Mod_B$.
\end{dfn}

 Note that this does not cover all the examples of abelian tensor categories arising from algebraic geometry. For example, categories of quasi-coherent sheaves on schemes which do not satisfy the resolution property are not geometric in this sense. From this point of view it would make sense to reserve the unqualified adjective ``geometric'' for the broader class. Since there are currently no recognition results for these more general tensor categories, we have instead decided to include the resolution property in the definition of geometric tensor categories.
 
 The question of whether or not a given scheme has the resolution property is generally hard to answer. However, a large class of schemes (for example smooth separated schemes and projective varieties) do have the resolution property.

\begin{prop}\label{prop:geometric_equals_weakly_Tannakian}
 An essentially small category $\ca{A}$ is weakly Tannakian if and only if $\Ind(\ca{A})$ is geometric. A tensor category $\ca{C}$ is geometric if and only if the full subcategory $\ca{C}_{\fp}$ of finitely presentable objects is weakly Tannakian. In fact, these constructions define mutually inverse 2-equivalences between the 2-category of weakly Tannakian categories and the 2-category of geometric categories.
\end{prop}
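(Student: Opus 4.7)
The plan is to verify the two bi-implications separately and then promote the resulting object-level correspondence to a 2-equivalence using Lemma~\ref{lemma:pregeometric_implies_finitary}.

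For the forward direction, suppose $\ca{A}$ is weakly Tannakian. Being ind-abelian, $\Ind(\ca{A})$ is abelian, and as the free filtered cocompletion of an essentially small category it is locally finitely presentable. The right exact symmetric monoidal structure on $\ca{A}$ extends uniquely to a closed symmetric monoidal structure on $\Ind(\ca{A})$ whose tensor product preserves filtered colimits in each variable; finite presentability of the unit and closure of finitely presentable objects under the tensor product are inherited from $\ca{A}$, so $\Ind(\ca{A})$ is a tensor category. For pre-geometricity, condition (ii) of Definition~\ref{dfn:weakly_tannakian} supplies for every $A \in \ca{A}$ an epimorphism $A^{\prime} \twoheadrightarrow A$ with $A^{\prime}$ dualizable; writing an arbitrary $X \in \Ind(\ca{A})$ as a filtered colimit of objects of $\ca{A}$ and using that both coproducts of epimorphisms and filtered colimits in a Grothendieck abelian category preserve epimorphisms, one obtains an epimorphism onto $X$ from a coproduct of dualizables. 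Finally, the fiber functor $w$ extends by left Kan extension to $\Ind(w) \colon \Ind(\ca{A}) \rightarrow \Mod_B$; flatness of $w$ is by definition left exactness of $\Ind(w)$, so this functor is exact and symmetric strong monoidal, and faithfulness transfers from $w$ to $\Ind(w)$ because non-vanishing morphisms in $\Ind(\ca{A})$ can be detected on finitely presentable sources by filteredness of the colimit presentations. This exhibits $\Ind(\ca{A})$ as geometric.

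For the converse, suppose $\ca{C}$ is geometric. Then $\ca{C}_{\fp}$ is essentially small, contains the unit, and is closed under the tensor product by the definition of locally finitely presentable symmetric monoidal category. It is closed under finite colimits, and tensoring with a fixed object is right exact because the ambient tensor product of $\ca{C}$ is cocontinuous. Since $\Ind(\ca{C}_{\fp}) \simeq \ca{C}$ is abelian, $\ca{C}_{\fp}$ is ind-abelian. Restricting the faithfully flat covering $\ca{C} \rightarrow \Mod_B$ to $\ca{C}_{\fp}$ yields a faithful, flat, right exact symmetric monoidal functor, establishing condition (i) of Definition~\ref{dfn:weakly_tannakian}. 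Condition (ii) is exactly the statement established inside the proof of Lemma~\ref{lemma:pregeometric_implies_finitary}: every finitely presentable object of a pre-geometric category is the cokernel of a morphism between objects with duals, hence admits an epimorphism from such.

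For the 2-equivalence, both constructions are essentially inverse to one another on objects: the equivalence $\ca{C} \simeq \Ind(\ca{C}_{\fp})$ is a defining property of locally finitely presentable categories, while $\ca{A} \simeq \Ind(\ca{A})_{\fp}$ follows from the fact that idempotents of $\ca{A}$ split in the abelian category $\Ind(\ca{A})$ with retracts again finitely presentable, so the splittings automatically lie in $\ca{A}$. Lemma~\ref{lemma:pregeometric_implies_finitary} then provides the natural equivalence of hom-categories that upgrades the assignments $\ca{A} \mapsto \Ind(\ca{A})$ and $\ca{C} \mapsto \ca{C}_{\fp}$ to mutually pseudo-inverse 2-functors. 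The main obstacle I anticipate is the bookkeeping around the extension of the right exact symmetric monoidal structure on $\ca{A}$ to a closed symmetric monoidal structure on $\Ind(\ca{A})$ satisfying all the compatibility conditions of a tensor category, together with the idempotent completeness argument on the other side; each individual step is standard for finitary extensions to Ind-completions, but assembling them uniformly enough to define an honest 2-equivalence (rather than just an equivalence of underlying 1-categories) requires some care.
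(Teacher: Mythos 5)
Your proposal is correct and follows essentially the same route as the paper: Day-style extension of the right exact monoidal structure to a closed one on $\Ind(\ca{A})$, condition (ii) yielding pre-geometricity, the correspondence between fiber functors on $\ca{A}$ and affine coverings of $\Ind(\ca{A})$ via left Kan extension and restriction, and Lemma~\ref{lemma:pregeometric_implies_finitary} to handle 1-cells and 2-cells in the 2-equivalence. The only spot that deserves slightly more care than you give it is faithfulness of the Kan-extended fiber functor, where one should note that $w$ lands in finitely presentable $B$-modules (each $A \in \ca{A}$ being a cokernel of a map between dualizables) so that morphisms out of $w(X_i)$ into a filtered colimit are detected at finite stages; the paper itself defers this to a citation.
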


\begin{proof}
 A symmetric monoidal structure on $\ca{A}$ with the property that tensoring with a fixed object is right exact induces a symmetric monoidal closed structure on $\Ind(\ca{A})$ (this follows for example from work of Day \cite{DAY_REFLECTION}, see \cite[Proposition~3.2.3]{SCHAEPPI_STACKS} for details). Moreover, from Part~(ii) of the definition of weakly Tannakian categories it follows that $\Ind(\ca{A})$ is pre-geometric. 
 
 From Lemma~\ref{lemma:pregeometric_implies_finitary} we know that all functors between geometric categories preserve finitely presentable objects. This shows that the assignment which sends a weakly Tannakian category $\ca{A}$ to $\Ind(\ca{A})$ is essentially surjective on 1-cells. From density of the objects with duals it follows that the assignment is full and faithful on 2-cells.
 
 Finally, to see that $\Ind(\ca{A})$ is geometric if and only if $\ca{A}$ is weakly Tannakian, note that left Kan extension along the inclusion $\ca{A}\rightarrow \Ind(\ca{A})$ induces an equivalence between fiber functors on $\ca{A}$ and affine coverings of $\Ind(\ca{A})$, with inverse equivalence given by restriction along this inclusion.
\end{proof}

\begin{thm}\label{thm:geometric_equals_qc}
 A tensor category $\ca{C}$ is geometric if and only if there exists an Adams stack $X$ and an equivalence
 \[
  \ca{C} \simeq \QCoh(X)
 \]
 of tensor categories. The pseudofunctor which sends an Adams stack $X$ to the category $\QCoh(X)$ and a morphism of stacks to the inverse image functor is a biequivalence between the 2-category of Adams stacks and the full subcategory of $\ca{T}$ consisting of geometric categories.
\end{thm}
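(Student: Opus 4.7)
The plan is to reduce the theorem to two existing inputs: Proposition~\ref{prop:geometric_equals_weakly_Tannakian} above, and the author's earlier recognition theorem from \cite{SCHAEPPI_TENSOR} which asserts that a category $\ca{A}$ is weakly Tannakian if and only if $\ca{A} \simeq \QCoh_{\fp}(X)$ for some Adams stack $X$, and moreover that the pseudofunctor $X \mapsto \QCoh_{\fp}(X)$ is a biequivalence between $\ca{AS}$ and the 2-category of weakly Tannakian categories. Given these, the claim should follow by taking (and undoing) ind-objects.

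For the first assertion, let $\ca{C}$ be geometric. By Proposition~\ref{prop:geometric_equals_weakly_Tannakian}, the subcategory $\ca{C}_{\fp}$ is weakly Tannakian, so the recognition theorem supplies an Adams stack $X$ together with a symmetric monoidal $R$-linear equivalence $\ca{C}_{\fp} \simeq \QCoh_{\fp}(X)$. Since $\ca{C}$ is locally finitely presentable, the canonical comparison $\ca{C} \to \Ind(\ca{C}_{\fp})$ is an equivalence of tensor categories (the monoidal closed structure on the right is the one arising from Day convolution, as in the proof of Proposition~\ref{prop:geometric_equals_weakly_Tannakian}). The analogous statement holds for $\QCoh(X) \simeq \Ind(\QCoh_{\fp}(X))$: because $X$ is an Adams stack, its category of quasi-coherent sheaves is generated by objects with duals, which are finitely presentable, so $\QCoh(X)$ is locally finitely presentable with finitely presentable objects exactly $\QCoh_{\fp}(X)$. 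Passing to ind-objects on both sides yields $\ca{C} \simeq \QCoh(X)$ as tensor categories. Conversely, if $\ca{C} \simeq \QCoh(X)$ for an Adams stack $X$, then $\ca{C}_{\fp} \simeq \QCoh_{\fp}(X)$ is weakly Tannakian (pull back a faithfully flat affine cover of $X$ to obtain a fiber functor), so $\ca{C}$ is geometric by Proposition~\ref{prop:geometric_equals_weakly_Tannakian}.

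For the biequivalence, the idea is to exhibit the pseudofunctor $X \mapsto \QCoh(X)$ as the composite of two biequivalences:
\[
 \ca{AS} \;\xrightarrow{\,\QCoh_{\fp}(-)\,}\; \{\text{weakly Tannakian categories}\} \;\xrightarrow{\,\Ind(-)\,}\; \{\text{geometric tensor categories}\}.
\]
The first arrow is a biequivalence by the cited result from \cite{SCHAEPPI_TENSOR}; the second is a biequivalence by Proposition~\ref{prop:geometric_equals_weakly_Tannakian} (its last sentence). The composite sends $X$ to $\Ind(\QCoh_{\fp}(X)) \simeq \QCoh(X)$, pseudonaturally in $X$ (the pseudonaturality amounts to the fact that inverse image functors preserve finitely presentable objects, which is a special case of Lemma~\ref{lemma:pregeometric_implies_finitary}). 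A composite of biequivalences is a biequivalence, concluding the argument.

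The main obstacle is not in the present proof but has been absorbed into the external inputs: the substantive content is the recognition theorem identifying weakly Tannakian categories with $\QCoh_{\fp}(X)$ for Adams stacks $X$, together with the tameness result showing that tensor functors between categories of quasi-coherent sheaves on Adams stacks come from morphisms of stacks. Once those are granted, the only bookkeeping required here is to check that the passage between $\ca{C}$ and $\ca{C}_{\fp}$ is compatible with the pseudofunctorial structure on both sides, which is immediate from Lemma~\ref{lemma:pregeometric_implies_finitary}.
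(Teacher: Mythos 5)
Your overall strategy is the same as the paper's: reduce everything to Proposition~\ref{prop:geometric_equals_weakly_Tannakian} together with the recognition and embedding theorems from the earlier work, and mediate between $\ca{C}$ and $\ca{C}_{\fp}$ (respectively $\QCoh(X)$ and $\QCoh_{\fp}(X)$) by passing to ind-objects. The factorization of $\QCoh(-)$ through $\QCoh_{\fp}(-)$ followed by $\Ind(-)$ is exactly how the paper organizes the argument, and your use of Lemma~\ref{lemma:pregeometric_implies_finitary} for the pseudonaturality is correct.

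There is one point you pass over that the paper singles out as the genuinely subtle step: you need $\QCoh(X)$ to be a \emph{locally finitely presentable symmetric monoidal category} in the sense of Definition~\ref{def:tensor_category} --- that is, the unit object $\mathcal{O}_X$ must be finitely presentable and finitely presentable objects must be closed under tensor product --- before $\QCoh(X)$ even qualifies as a tensor category and before $\Ind(\QCoh_{\fp}(X)) \simeq \QCoh(X)$ makes sense. You justify this by saying that $\QCoh(X)$ is generated by objects with duals, ``which are finitely presentable,'' but that claim is circular as stated: an object with a dual $V$ satisfies $\QCoh(X)(V,-) \cong \QCoh(X)(\mathcal{O}_X, V^{\vee}\otimes -)$, so its finite presentability is deduced \emph{from} the finite presentability of the unit, not the other way around. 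The paper closes this gap by identifying $\QCoh(X)$ with the category of comodules over a flat Hopf algebroid presenting $X$ and using the fact that a comodule is finitely presentable if and only if its underlying module is; this gives both that $\mathcal{O}_X$ is finitely presentable and that finitely presentable objects are closed under $\otimes$. You should either supply this argument or cite it; the rest of your reduction then goes through as written.
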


\begin{proof}
 The first two assertions follow immediately from Proposition~\ref{prop:geometric_equals_weakly_Tannakian}, the recognition theorem (see \cite[Theorem~1.6]{SCHAEPPI_TENSOR}), and the embedding theorem for weakly Tannakian categories (see \cite[Theoren~1.3.3]{SCHAEPPI_STACKS}).
 
 As one of the referees pointed out, checking that $\QCoh(X)$ is a locally finitely presentable as a symmetric monoidal category is slightly subtle. This follows for example from the two facts that $\QCoh(X)$ is equivalent to the category of comodules of any flat Hopf algebroid that provides a presentation for $X$ (see e.g.\ \cite[\S 3.4]{NAUMANN} and \cite[Remark~2.39]{GOERSS}), and that a comodule is finitely presentable if and only if its underlying module is (see \cite[Proposition~1.3.3]{HOVEY}). From this we deduce that the unit object is finitely presentable, and that finitely presentable objects are closed under tensor products. Since the objects with duals are finitely presentable and form a (dense) generator it follows that $\QCoh(X)$ is indeed locally finitely presentable as a symmetric monoidal category.
\end{proof}

 Proposition~\ref{prop:geometric_equals_weakly_Tannakian} shows that geometric categories are equivalent to weakly Tannakian categories. In other words, we have so far simply introduced a slightly different language to talk about the same mathematical objects. However, this language is better suited for our purposes since it is closer to the geometric interpretation and it allows us to talk about quasi-coherent sheaves directly.

\section{Affine and cohomologically affine functors}\label{section:affine}
 The equivalence between Adams stacks and geometric morphisms suggests that it should be possible to characterize various geometric properties and constructions purely categorically. For example, in \cite{SCHAEPPI_TENSOR} the author showed that products of Adams stacks are sent to coproducts of tensor categories. In this section we set up the machinery necessary to give a
 categorical characterization of affine morphisms between Adams stacks.

 The result we seek to generalize is Serre's criterion for a morphism to be affine (see Corollary~\ref{cor:serres_criterion}), though to prove it we will need to extend the result of \cite{SCHAEPPI_TENSOR} to arbitrary fiber products (see Theorem~\ref{thm:pushouts}). In fact, these two results are closely linked: in order to prove Theorem~\ref{thm:pushouts}, we need to use basic results about affine morphisms. The following Definition extends \cite[Definition~3.1]{ALPER} to general tensor categories.

\begin{dfn}\label{def:cohomologically_affine}
 A tensor functor $F \colon \ca{A} \rightarrow \ca{B}$ between two tensor categories is called \emph{cohomologically affine} if the right adjoint of $F$ is faithful and right exact.
\end{dfn}

 Recall that an \emph{algebra} $A$ in a tensor category $\ca{C}$ consists of an object $A$, together with a \emph{multiplication} $\mu \colon A\otimes A \rightarrow A$ and a \emph{unit} $\eta \colon I \rightarrow A$, subject to the associativity and unit axioms. An algebra is called \emph{commutative} if the diagram
\[
 \xymatrix{A\otimes A \ar[rd]_{\mu} \ar[rr]^{s_{A,A}} && A \otimes A \ar[ld]^{\mu} \\ & A }
\]
 is commutative.

 A \emph{module} of an algebra $A \in \ca{C}$ is an object $M$, together with a morphism $A\otimes M \rightarrow M$ which is compatible with the unit and multiplication of $A$. A morphism of modules is a morphism $M \rightarrow N$ in $\ca{C}$ such that the diagram
\[
 \xymatrix{A\otimes M \ar[r] \ar[d] & A \otimes N \ar[d] \\ M \ar[r] & N} 
\]
 is commutative. 

\begin{notation}\label{notation:module_category}
 Let $A \in \ca{C}$ be a commutative algebra. The category of $A$-modules in $\ca{C}$ is denoted by $\ca{C}_A$. This is again a tensor category, with tensor product given by the evident coequalizer of the two actions of $A$.
\end{notation}

\begin{rmk}
 Let $\ca{C}$ be a pre-geometric category and let $\ca{A}$ be a commutative algebra. Then $\ca{C}_A$ is pre-geometric. Indeed, every module is a quotient of a free module, and every free module is a colimit of free modules on objects with duals. The category $\ca{C}_A$ is abelian since limits and colimits in $\ca{C}_A$ are computed as in $\ca{C}$.
\end{rmk}
 
\begin{example}\label{example:free_module_affine}
 Let $\ca{C}$ be a tensor category, and let $A$ be a commutative algebra in $\ca{C}$. Then the tensor functor $A \otimes - \colon \ca{C} \rightarrow \ca{C}_A$ which sends an object $M$ to the free $A$-module is cohomologically affine.
\end{example}

 In fact, we will see that every cohomologically affine functor between geometric categories is of this form. In order to state this, it is convenient to introduce a name for the functors in Example~\ref{example:free_module_affine}.

\begin{dfn}\label{def:affine_functor}
 A tensor functor $F \colon \ca{C} \rightarrow \ca{D}$ is called an \emph{affine functor} if there exists a commutative algebra $A$ in $\ca{C}$ and an equivalence $\ca{D} \simeq \ca{C}_A$ of tensor categories such that the diagram
\[
 \xymatrix{ & \ca{C} \ar[rd]^{A \otimes -} \ar[ld]_{F} \\ \ca{D} \ar[rr]_{\simeq}  && \ca{C}_A }
\]
 commutes up to isomorphism.
\end{dfn}

\begin{rmk}\label{rmk:algebra_from_affine_functor}
 Note that if there exists a commutative algebra $A$ as in Definition~\ref{def:affine_functor}, then $A$ is necessarily isomorphic to $U(I)$, where $U$ denotes the right adjoint of the tensor functor $F$ and $I \in \ca{D}$ is the unit object.
\end{rmk}
 
 Recall from \cite{FAUSK_HU_MAY} that a tensor functor $F \colon \ca{C} \rightarrow \ca{D}$ is said to \emph{satisfy the projection formula} or to be \emph{coclosed} if the dashed arrow making the diagram
\begin{equation}\label{eqn:projection_formula}
\vcenter{
\xymatrix@!C=100pt{  UF(X \otimes UY) \ar[r]^{U\varphi^{-1}_{X,UY}} &U(FX \otimes FUY) \ar[d]^{U(FX \otimes \varepsilon_Y)} \\ X \otimes UY \ar[u]^{\eta_{X \otimes UY}} \ar@{-->}[r]_-{\chi_{X,Y}} & U(FX \otimes Y) }
}
\end{equation}
 commutative is an isomorphism for every $X \in \ca{C}$ and $Y \in \ca{D}$. 
 
\begin{lemma}\label{lemma:duals_implies_projection}
 Suppose that $X \in \ca{C}$ has a dual $X^{\vee}$. Then the natural morphism $\chi_{X,Y}$ defined in Equation~\ref{eqn:projection_formula} is an isomorphism for all $Y \in \ca{C}$.
\end{lemma}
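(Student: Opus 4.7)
The plan is to exploit the fact that an object $X$ with a dual in a symmetric monoidal category makes $X \otimes -$ into a two-sided adjoint (to $X^\vee \otimes -$), and that symmetric strong monoidal functors preserve duals, so that $FX$ has a dual $FX^\vee$ in $\ca{D}$.

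First I would observe that both $X \otimes U(-)$ and $U(FX \otimes -)$, viewed as functors $\ca{D} \to \ca{C}$, are composites of functors with left adjoints, and thus are themselves right adjoints. Concretely, $X \otimes U(-)$ is right adjoint to $F(X^\vee \otimes -)$, while $U(FX \otimes -)$ is right adjoint to $FX^\vee \otimes F(-)$. The strong monoidal structure of $F$ gives a canonical natural isomorphism $FX^\vee \otimes F(-) \cong F(X^\vee \otimes -)$, so these two left adjoints are canonically isomorphic, hence so are their right adjoints. This already shows that $X \otimes UY$ and $U(FX \otimes Y)$ are abstractly isomorphic; what remains is to identify the abstract isomorphism with $\chi_{X,Y}$.

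For that identification I would work via Yoneda. For an arbitrary $Z \in \ca{C}$, the composite
\[
\ca{C}(Z, X \otimes UY) \cong \ca{C}(X^\vee \otimes Z, UY) \cong \ca{D}(F(X^\vee \otimes Z), Y) \cong \ca{D}(FX^\vee \otimes FZ, Y) \cong \ca{D}(FZ, FX \otimes Y) \cong \ca{C}(Z, U(FX \otimes Y))
\]
uses, in order: the adjunction $X \otimes - \dashv X^\vee \otimes -$, the adjunction $F \dashv U$, the structure isomorphism $\varphi^{-1}_{X^\vee, Z}$, the adjunction $FX^\vee \otimes - \dashv FX \otimes -$ in $\ca{D}$, and again $F \dashv U$. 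I would trace through the definitions of each of these adjunctions in terms of (co)units and coherence isomorphisms, and check that the resulting composite equals post-composition with $\chi_{X,Y}$ as defined by Diagram~(\ref{eqn:projection_formula}). The claim then follows from the Yoneda lemma.

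The main obstacle is precisely this bookkeeping step: the definition of $\chi_{X,Y}$ involves the unit $\eta_{X \otimes UY}$, the inverse monoidal structure $\varphi^{-1}_{X,UY}$, and the counit $\varepsilon_Y$, while the adjunction chain above introduces counits and units in a different order using the duality on $X$. The verification reduces to the triangle identities for $F \dashv U$ together with the coherence axioms for a symmetric strong monoidal functor, and the compatibility of $\varphi$ with the evaluation/coevaluation maps that witness $FX^\vee$ as the dual of $FX$. An alternative (and essentially equivalent) route would be to write down the inverse explicitly as
\[
U(FX \otimes Y) \cong X \otimes X^\vee \otimes U(FX \otimes Y) \longrightarrow X \otimes U(FX^\vee \otimes FX \otimes Y) \longrightarrow X \otimes UY,
\]
where the first unlabeled arrow uses the lax monoidal structure of $U$ (equivalently, $\eta$ and $\varphi^{-1}$) and the second uses $\ev_{FX} \otimes Y$, and then verify by diagram chasing that this composite is inverse to $\chi_{X,Y}$ using the triangle identities for the duality on $X$.
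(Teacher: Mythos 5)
Your proposal is correct and is essentially the argument behind the paper's proof: the paper simply defers to \cite[Proposition~1.12]{FAUSK_HU_MAY}, and the proof there is exactly the two-sided-adjoint argument you describe --- $X\otimes U(-)$ and $U(FX\otimes -)$ are right adjoints of the canonically isomorphic functors $F(X^{\vee}\otimes -)$ and $FX^{\vee}\otimes F(-)$, followed by the Yoneda/mate identification of the resulting isomorphism with $\chi_{X,Y}$. The bookkeeping you defer is a routine mate computation using the triangle identities, and the explicit-inverse route you sketch at the end is the other standard way to finish it.
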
 

\begin{proof}
 This is proved in \cite[Proposition~1.12]{FAUSK_HU_MAY}. 
%
\end{proof}

\begin{prop}\label{prop:cohomologically_affine_implies_affine}
 Let $F \colon \ca{C} \rightarrow \ca{D}$ be a cohomologically affine functor. If $\ca{C}$ is pre-geometric, then $F$ is coclosed and affine.
\end{prop}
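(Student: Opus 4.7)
The strategy is to identify $\ca{D}$ with the category $\ca{C}_A$ of modules over the canonical commutative algebra $A := U(I)$ via a monadicity argument, after first establishing the projection formula. The starting observation is that $U$ preserves \emph{all} small colimits: by cohomological affineness it is right exact, and by Lemma~\ref{lemma:pregeometric_implies_finitary} its left adjoint $F$ preserves finitely presentable objects, so $U$ preserves filtered colimits; together these imply that $U$ is cocontinuous.

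From this I would deduce coclosedness by a density argument. Regarded as functors of $X$, both sides of the projection formula morphism $\chi_{X, Y}$ of Equation~\ref{eqn:projection_formula} preserve small colimits --- the source $X \otimes UY$ because $\ca{C}$ is closed, and the target $U(FX \otimes Y)$ because each of $F$, $(-) \otimes Y$, and $U$ is cocontinuous. By Lemma~\ref{lemma:duals_implies_projection}, $\chi_{X, Y}$ is invertible whenever $X$ has a dual, and since the objects with duals form a dense generator of $\ca{C}$ by Remark~\ref{rmk:duals_dense}, writing an arbitrary $X$ as the canonical colimit of such objects yields invertibility of $\chi_{X, Y}$ in general.

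With coclosedness in hand, the lax monoidal structure of $U$ equips $A = U(I)$ with the structure of a commutative algebra in $\ca{C}$, and specializing $\chi$ at $Y = I$ gives a natural isomorphism $(-) \otimes A \cong UF(-)$ of monads on $\ca{C}$, whose Eilenberg--Moore category is precisely $\ca{C}_A$. It remains to check that $F \dashv U$ is monadic. The image $F(\ca{G})$ of a dualizable generator $\ca{G}$ of $\ca{C}$ is a generator of $\ca{D}$ consisting of dualizable objects, since via the adjunction isomorphism $\ca{D}(FG, -) \cong \ca{C}(G, U-)$ together with the faithfulness of $U$ one transports the separating property of $\ca{G}$ in the Grothendieck abelian category $\ca{C}$ over to $\ca{D}$, and the detection of isomorphisms by $\ca{G}$ then shows that $U$ reflects isomorphisms. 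Combined with the cocontinuity of $U$ and the cocompleteness of $\ca{D}$, Beck's monadicity theorem yields the required equivalence $\ca{D} \simeq \ca{C}_A$ compatible with $F$ and the free module functor, so that $F$ is affine.

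The principal obstacle is the coclosedness step, for it is here that pre-geometricity of $\ca{C}$ enters non-trivially: it supplies both the dense generator of dualizable objects for the density argument and, via Lemma~\ref{lemma:pregeometric_implies_finitary}, the preservation of filtered colimits by $U$ needed to promote Lemma~\ref{lemma:duals_implies_projection} from the dualizable case to arbitrary objects. Once coclosedness is available, the passage from the coclosed adjunction to an identification with a category of modules is a relatively standard monadicity argument.
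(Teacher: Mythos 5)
Your proposal is correct and follows essentially the same route as the paper: $U$ is cocontinuous because it is right exact and preserves filtered colimits (via Lemma~\ref{lemma:pregeometric_implies_finitary}), coclosedness follows by extending Lemma~\ref{lemma:duals_implies_projection} along the dense generator of dualizable objects, and affineness follows from Beck monadicity together with the identification of the monad $UF$ with $U(I)\otimes -$ obtained by evaluating $\chi$ at $Y=I$. The only cosmetic differences are that you justify isomorphism-reflection of $U$ via transported generators rather than directly from faithfulness and exactness, and the paper is slightly more explicit (via the Eckmann--Hilton argument) that the monad multiplication agrees with the algebra multiplication on $U(I)$.
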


\begin{proof}
 First note that exactness of $U$ implies that $U$ preserves all small colimits. Indeed, the left adjoint $F$ preserves finitely presentable objects by Lemma~\ref{lemma:pregeometric_implies_finitary}, and this implies that $U$ preserves all filtered colimits. A functor which preserves filtered colimits and finite colimits preserves all small colimits.

 Using this, we can show that $F$ is coclosed. Indeed, both the domain and codomain of the natural transformation
 \[
 \chi_{X,Y} \colon X\otimes UY \rightarrow U(FX \otimes Y)
 \]
 are cocontinuous in $X$. The claim now follows from the fact that every object in $\ca{C}$ is a colimit of objects with duals and from Lemma~\ref{lemma:duals_implies_projection}.
 
 To see that $F$ is affine, note that $U$ is exact and faithful, so it is monadic by Beck's Monadicity Theorem (recall from Definition~\ref{def:pregeometric_category} that pre-geometric categories are assumed to be abelian). It only remains to check that the symmetric monoidal monad in question is induced by tensoring with a commutative algebra in $\ca{C}$. The existence of the isomorphism
 \[
 \xymatrix{UFY \ar[r]^-{\cong} & U(FY\otimes I) \ar[r]^-{\chi^{-1}} & Y \otimes U(I)}
 \]
 shows that the underlying symmetric monoidal functor of the symmetric monoidal monad $UF$ is given by tensoring with the commutative algebra $U(I)$. The monad structure endows $U(I)$ with a compatible algebra structure, so by the Eckmann-Hilton argument, the monad structure must also be given by the algebra structure of $U(I)$.
\end{proof}

 In \cite{TOTARO}, Totaro showed that for certain class of stacks $X$, having the strong resolution property implies that the diagonal morphism is affine. In other words, a morphism with affine domain and target $X$ is affine. The following proposition shows that an analogous fact is true in the dual world of tensor categories: if the objects with duals form a generator, then any tensor functor with affine codomain is itself affine.
 
\begin{prop}\label{prop:affine_target_implies_affine}
 Let $\ca{C}$ be a pre-geometric category, $B$ a commutative $R$-algebra. Then any tensor functor $F \colon \ca{C} \rightarrow \Mod_B$ is affine.
\end{prop}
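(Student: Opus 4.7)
My plan is to apply Proposition~\ref{prop:cohomologically_affine_implies_affine}: it suffices to prove that $F$ is cohomologically affine, i.e.\ that the right adjoint $U \colon \Mod_B \to \ca{C}$ is both faithful and right exact (left exactness of $U$ is automatic, since $U$ is a right adjoint).

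Faithfulness will follow from a short adjunction argument. Because $F$ is symmetric strong monoidal, $F(I) \cong B$, the unit of $\Mod_B$. The adjunction then yields a natural isomorphism $\ca{C}(I, U(-)) \cong \Mod_B(B,-)$, and the right-hand side is just the underlying-set functor of $\Mod_B$, which is faithful. Consequently, the composite $\Mod_B(Y, Z) \to \ca{C}(UY, UZ) \to \Set\bigl(\ca{C}(I, UY), \ca{C}(I, UZ)\bigr)$ is injective, forcing the first map to be injective as well; hence $U$ is faithful.

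For right exactness, I would exploit the dualizable generators of $\ca{C}$. Since $F$ is strong monoidal, it sends any $X \in \ca{C}$ with a dual to a dualizable, hence finitely generated projective, and in particular projective $B$-module. The adjunction then identifies $\ca{C}(X, U(-)) \cong \Mod_B(FX,-)$ as an exact functor for every such $X$. Given an epimorphism $p \colon M \to N$ in $\Mod_B$ and the image factorization $UM \twoheadrightarrow J \hookrightarrow UN$ of $Up$ in $\ca{C}$, the exactness of $\ca{C}(X, U(-))$ implies that $\ca{C}(X, UM) \to \ca{C}(X, UN)$ is surjective for every dual $X$; since this map factors through the injection $\ca{C}(X, J) \hookrightarrow \ca{C}(X, UN)$, that injection must itself be an isomorphism. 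By Day--Street (Remark~\ref{rmk:duals_dense}), the objects with duals form a \emph{dense} generator of $\ca{C}$, so this pointwise isomorphism upgrades to an actual isomorphism $J \cong UN$; thus $Up$ is epi. Combined with the left exactness of $U$, this shows that $U$ preserves epi-mono factorizations, and hence cokernels, so $U$ is exact.

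The main obstacle is the final density step, where one must pass from the pointwise (on duals) isomorphism $\ca{C}(X, J) \cong \ca{C}(X, UN)$ to the global conclusion that $J \to UN$ is itself an isomorphism. This is precisely the point where the weaker statement that the duals form a generator would not suffice, and Day--Street's sharper density result (Remark~\ref{rmk:duals_dense}) is essential. Once $U$ is shown to be faithful and exact, Proposition~\ref{prop:cohomologically_affine_implies_affine} delivers the conclusion that $F$ is affine.
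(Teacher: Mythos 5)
Your proof is correct and follows essentially the same route as the paper: reduce to cohomological affineness via Proposition~\ref{prop:cohomologically_affine_implies_affine}, then show $U$ preserves epimorphisms using that $F$ sends objects with duals to projective $B$-modules together with the generating property of the duals, and deduce faithfulness from $\ca{C}(I,U(-))\cong \Mod_B(B,-)$. One small correction: your last step needs only that the objects with duals form a generator --- in an abelian category a monomorphism $J\hookrightarrow UN$ through which every map from a generating family factors is already an isomorphism --- so the Day--Street density result is not actually essential there.
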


\begin{proof}
 By Proposition~\ref{prop:cohomologically_affine_implies_affine}, it suffices to show that $F$ is cohomologically affine. Thus we have to show that the right adjoint $U$ of $F$ is faithful and right exact.

 To see that it is exact, it suffices to show that it preserves epimorphisms (any right adjoint is left exact). Thus let $p \colon M \rightarrow N$ be an epimorphism of $B$-modules, let $V \in \ca{C}$ be an object with dual, and let $V \rightarrow UN$ be an arbitrary morphism. By adjunction, this last morphism corresponds to a morphism $FV \rightarrow N$. Since tensor functors preserve objects with duals, $FV$ is a finitely generated projective module. The morphism $FV \rightarrow N$ therefore factors through $p$. Applying the adjunction isomorphisms again we find that $V \rightarrow UN$ factors through $Up \colon UM \rightarrow UN$.
 
 The claim that $Up$ is an epimorphism now follows from the fact that the morphisms $V \rightarrow UN$ are jointly epimorphic (since the objects with duals form a generator of $\ca{C}$).
 
 It remains to show that $U$ is faithful. Since $U$ is exact this boils down to showing that $UM=0$ implies $M=0$ for every $B$-module $M$. This follows from the sequence
 \[
 M \cong \Mod_B(B,M) \cong \Mod_B(FI,M) \cong \ca{C}(I,UM)
 \]
 of isomorphisms.
%
\end{proof}

 Proposition~\ref{prop:cohomologically_affine_implies_affine} allows us to identify (cohomologically) affine morphisms with pre-geometric domain $\ca{C}$ with commutative algebras in $\ca{C}$. In other words, the entire information about an affine morphism (including its codomain) is contained in a single object in $\ca{C}$. 
 
 Since all functors with affine codomain are affine by Proposition~\ref{prop:affine_target_implies_affine}, we get the following intrinsic characterization of such functors. As a corollary we obtain an alternative description of faithfully flat coverings of pre-geometric categories (see Corollary~\ref{cor:faithfully_flat_coverings_vs_algebras}). This description is the starting point for the proofs of the two main results of this paper (Theorem~\ref{thm:fiber_product} and Theorem~\ref{thm:description}).
 
\begin{dfn}\label{def:affine_algebra}
 Let $\ca{C}$ be a pre-geometric category. A commutative algebra $A \in \ca{C}$ is called an \emph{affine algebra} if $A$ is a projective generator of the category $\ca{C}_A$ of $A$-modules.
\end{dfn}

\begin{example}
 If $\ca{C}$ is the category $\QCoh(X)$ of quasi-coherent sheaves on a quasi-compact and quasi-separated scheme $X$, then a commutative algebra in $\ca{C}$ is precisely a quasi-coherent $\mathcal{O}_X$-algebra $\mathcal{A}$. Such an algebra is affine if and only if the relative spectrum $\Spec_X(\mathcal{A})$ is an affine scheme.
\end{example}

\begin{prop}\label{prop:affine_functor_affine_algebra}
 Let $\ca{C}$ be a pre-geometric category. There is an equivalence between tensor functors $F \colon \ca{C} \rightarrow \Mod_B$ where $B$ is a commutative $R$-algebra on the one hand and affine algebras in $\ca{C}$ on the other. Under this equivalence, a tensor functor $F$ corresponds to the affine algebra $U(B)$, where $U$ denotes the right adjoint of $F$. 
\end{prop}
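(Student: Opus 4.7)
The plan is to describe the two constructions explicitly and verify they are mutually inverse, using Proposition~\ref{prop:cohomologically_affine_implies_affine}, Proposition~\ref{prop:affine_target_implies_affine}, and Remark~\ref{rmk:algebra_from_affine_functor} to do most of the heavy lifting.

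For the forward direction, given a tensor functor $F \colon \ca{C} \rightarrow \Mod_B$, Proposition~\ref{prop:affine_target_implies_affine} tells us that $F$ is affine, and Proposition~\ref{prop:cohomologically_affine_implies_affine} (whose proof uses Beck monadicity) shows that the right adjoint $U$ is faithful and right exact. Together these give a commutative algebra $A \in \ca{C}$ with $A \cong U(B)$ (by Remark~\ref{rmk:algebra_from_affine_functor}, since $B$ is the unit of $\Mod_B$) and an equivalence of tensor categories $\Mod_B \simeq \ca{C}_A$ that identifies the free $B$-module $B$ with the free $A$-module $A$. Since $B$ is a projective generator of $\Mod_B$, the equivalence transports this property to $A \in \ca{C}_A$, so $A$ is an affine algebra in the sense of Definition~\ref{def:affine_algebra}.

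For the backward direction, let $A$ be an affine algebra in $\ca{C}$. Set $B \defl \ca{C}_A(A,A) \cong \ca{C}(I, A)$, which inherits a commutative $R$-algebra structure from the commutative algebra structure on $A$. By Lemma~\ref{lemma:pregeometric_implies_finitary} the tensor functor $A \otimes - \colon \ca{C} \rightarrow \ca{C}_A$ preserves finitely presentable objects, so $A = (A \otimes -)(I)$ is a finitely presentable projective generator of the Grothendieck abelian category $\ca{C}_A$. A classical Morita-style argument then shows that $\ca{C}_A(A, -) \colon \ca{C}_A \rightarrow \Mod_B$ is an equivalence of categories, and this equivalence promotes to a tensor equivalence because both monoidal structures are cocontinuous in each variable and units are sent to units. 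Composing with $A \otimes -$ produces the desired tensor functor $F \colon \ca{C} \rightarrow \Mod_B$.

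Finally, the two constructions are mutually inverse up to isomorphism: starting from $F$, recovering $A = U(B)$ and composing with the equivalence $\Mod_B \simeq \ca{C}_A$ reproduces $F$; and starting from an affine algebra $A$, the right adjoint $U$ of the constructed $F$ satisfies $U(B) = U(F(I)) \cong A \otimes I \cong A$. The main obstacle I expect is the step of upgrading the Morita-type equivalence $\ca{C}_A \simeq \Mod_B$ to a \emph{symmetric monoidal} equivalence; once one argues that both tensor products are the essentially unique cocontinuous (in each variable) extension of the assignment sending the pair of units to the unit, the rest of the proof follows cleanly from results already in place.
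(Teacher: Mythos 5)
Your proof is correct and follows essentially the same route as the paper: both directions reduce to Proposition~\ref{prop:affine_target_implies_affine} (tensor functors into $\Mod_B$ are of the form $A\otimes -$) together with the observation that a tensor category whose unit is a projective generator is monoidally equivalent to modules over the endomorphism ring of the unit, which is exactly the paper's Lemma~\ref{lemma:projective_generator_implies_affine}. The only difference is that you spell out this Morita-type step (and the mutual inverseness) where the paper cites the lemma.
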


\begin{proof}
 From Proposition~\ref{prop:affine_target_implies_affine} we know that such tensor functors are (up to equivalence) of the form $A \otimes - \colon \ca{C} \rightarrow \ca{C}_A$. It is of the above form if and only if the tensor category $\ca{C}_A$ is equivalent to a category of $B$-modules for some commutative $R$-algebra $B$. The claim therefore follows from Lemma~\ref{lemma:projective_generator_implies_affine} below.
 
 The second statement follows directly from Remark~\ref{rmk:algebra_from_affine_functor}.
\end{proof}

\begin{lemma}\label{lemma:projective_generator_implies_affine}
 Let $\ca{C}$ be a tensor category such that the unit $I$ is a projective generator, and let $B=\ca{C}(I,I)$ be the commutative $R$-algebra of endomorphisms of $I$. Then the functor $\ca{C}(I,-) \colon \ca{C} \rightarrow \Mod_B$ is an equivalence of symmetric monoidal categories.
\end{lemma}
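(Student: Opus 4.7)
The strategy is to establish that $U \defl \ca{C}(I,-) \colon \ca{C} \to \Mod_B$ is an $R$-linear cocontinuous equivalence, and then upgrade it to a symmetric strong monoidal equivalence. First I would observe that $U$ factors through $\Mod_B$ via precomposition (giving a right action of $B = \End(I)$) and that it is $R$-linear. Since $I$ is projective, $U$ is exact; since $I$ is finitely presentable (as the unit of a tensor category), $U$ preserves filtered colimits; combining these, $U$ is cocontinuous. In particular $U$ preserves arbitrary coproducts, so $U(\bigoplus_S I) \cong B^{(S)}$ for every set $S$.

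Next I would prove that $U$ is fully faithful and essentially surjective by comparing presentations. Since $I$ is a projective generator of the cocomplete abelian category $\ca{C}$, every $X \in \ca{C}$ admits a free presentation $\bigoplus_S I \to \bigoplus_T I \to X \to 0$. The canonical comparison map $\Phi_{X,Y} \colon \ca{C}(X,Y) \to \Mod_B(U(X), U(Y))$ is tautologically an isomorphism when $X = I$ (both sides evaluate to $U(Y)$). In the general case, both $\ca{C}(-,Y)$ and $\Mod_B(U(-), U(Y))$ turn the presentation of $X$ into the same equaliser (using cocontinuity of $U$), and $\Phi$ is natural, so $\Phi_{X,Y}$ is an isomorphism for all $X$. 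For essential surjectivity, given any $B$-module $N$ pick a free presentation $B^{(S)} \to B^{(T)} \to N \to 0$; by fullness the map $B^{(S)} \to B^{(T)}$ lifts to a morphism $\bigoplus_S I \to \bigoplus_T I$ in $\ca{C}$, whose cokernel $X$ satisfies $U(X) \cong N$ since $U$ is cocontinuous.

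Finally I would upgrade $U$ to a symmetric strong monoidal equivalence. The unit comparison $B \to U(I)$ is the identity. The tensor comparison $\varphi_{X,Y} \colon U(X) \ten{B} U(Y) \to U(X \otimes Y)$ sends $f \otimes g$ to $(f \otimes g) \circ \lambda_I^{-1}$; it is well-defined on the coequaliser because $B$ is commutative and the symmetry of $\ca{C}$ matches the two $B$-actions. Both source and target of $\varphi$ are $R$-linear and cocontinuous in each variable (using that the tensor product of $\ca{C}$ preserves colimits in each variable, that $U$ is cocontinuous, and that $- \ten{B} -$ is cocontinuous in each variable), and they agree when $X = Y = I$. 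Density of $I$ then forces $\varphi_{X,Y}$ to be an isomorphism for all $X, Y$, and the monoidal coherence axioms follow by the same argument since they hold when evaluated on copies of $I$.

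I expect the main technical point to be the verification of cocontinuity of $U$ — specifically, using both projectivity of $I$ (for right exactness) and finite presentability of $I$ (for preservation of filtered colimits) to deduce preservation of arbitrary small colimits — after which the equivalence and the monoidal structure both reduce to routine presentation-and-density arguments.
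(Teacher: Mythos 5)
Your proposal is correct and follows essentially the same route as the paper: cocontinuity of $\ca{C}(I,-)$ from projectivity plus finite presentability of $I$, the Gabriel--Mitchell-style equivalence with $\Mod_B$ at the level of underlying categories (which the paper simply cites as well-known and you spell out via free presentations), and the monoidal comparison forced to be invertible because tensor products in both categories commute with colimits. No substantive difference in strategy, only in the level of detail.
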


\begin{proof}
 Since $I$ is by assumption finitely presentable (see Definition~\ref{def:tensor_category}), the right adjoint functor $\ca{C}(I,-)$ is exact and preserves filtered colimits, so it is in fact cocontinuous. Using this, the result is well-known at the level of underlying categories. Compatibility with the symmetric monoidal structure follows since tensor products in both categories commute with colimits.
\end{proof}

 As an immediate consequence, we get the following intrinsic characterization of faithfully flat coverings of pre-geometric categories. Recall that a commutative algebra $A$ in a tensor category $\ca{C}$ is called \emph{flat} if the functor $A \otimes -$ is exact, and \emph{faithful} if $A\otimes - \colon \ca{C} \rightarrow \ca{C}$ is faithful.

\begin{cor}\label{cor:faithfully_flat_coverings_vs_algebras}
 Let $\ca{C}$ be a pre-geometric category. Then the assignment which sends a faithfully flat covering
\[
 F \colon \ca{C} \rightarrow \Mod_B
\]
 to the algebra $U(B)$ (where $U$ denotes the right adjoint of $F$) gives an equivalence between faithfully flat coverings of $\ca{C}$ and faithfully flat affine algebras in $\ca{C}$.
\end{cor}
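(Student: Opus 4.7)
The plan is to deduce this corollary directly from Proposition~\ref{prop:affine_functor_affine_algebra}. That proposition already provides an equivalence between tensor functors $F\colon \ca{C}\rightarrow \Mod_B$ (with $B$ varying over commutative $R$-algebras) and affine algebras $A$ in $\ca{C}$, under which any such $F$ corresponds to the free module functor $A\otimes -\colon \ca{C}\rightarrow \ca{C}_A$ composed with an equivalence $\ca{C}_A \simeq \Mod_B$ of tensor categories, where $A=U(B)$. Thus it suffices to verify that this equivalence identifies the sub-class of faithfully flat coverings on one side with the sub-class of faithfully flat affine algebras on the other.

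For this, I would invoke the forgetful functor $\ca{C}_A \rightarrow \ca{C}$. It is faithful, and since $\ca{C}_A$ is monadic over $\ca{C}$ via a left adjoint $A\otimes -$ which is cocontinuous, it creates all finite limits and finite colimits. Consequently a functor into $\ca{C}_A$ is faithful (respectively exact) if and only if its composite with the forgetful functor is faithful (respectively exact). Applied to $A \otimes -\colon \ca{C} \to \ca{C}_A$, the composite with the forgetful functor is the endofunctor $A\otimes -\colon \ca{C}\rightarrow \ca{C}$, and the two conditions translate precisely into $A$ being a faithful algebra and a flat algebra, respectively. Since faithfulness and exactness are preserved and reflected by the equivalence $\ca{C}_A \simeq \Mod_B$, this matches the conditions on both sides.

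There is no substantive obstacle: the corollary is essentially a direct translation of Proposition~\ref{prop:affine_functor_affine_algebra} through the definitions of faithful and flat algebras, all of the real work having been carried out in proving Propositions~\ref{prop:cohomologically_affine_implies_affine}, \ref{prop:affine_target_implies_affine}, and \ref{prop:affine_functor_affine_algebra}.
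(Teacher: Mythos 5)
Your proposal is correct and follows essentially the same route as the paper: the paper's proof is a one-line appeal to Proposition~\ref{prop:affine_functor_affine_algebra} followed by the observation that $A\otimes -$ is faithful and exact if and only if $A$ is faithfully flat. You merely spell out the (correct) intermediate step that faithfulness and exactness of $A\otimes -\colon \ca{C}\rightarrow\ca{C}_A$ can be tested after composing with the forgetful functor, which the paper leaves implicit.
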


\begin{proof}
 By Proposition~\ref{prop:affine_functor_affine_algebra}, $F$ is (up to equivalence) given by tensoring with the affine algebra $U(B)$. Thus it is faithful and exact if and only if this algebra is faithfully flat.
\end{proof}

 The following two lemmas are useful for recognizing affine algebras respectively faithfully flat algebras in pre-geometric categories.
 
\begin{lemma}\label{lemma:affine_algebra_characterization}
 Let $\ca{C}$ be a pre-geometric category. Then a commutative algebra $A \in \ca{C}$ is affine if and only if the functor
 \[
 \ca{C}(I,U-) \colon \ca{C}_A \rightarrow \Mod_R
\] 
is faithful and right exact, where $U \colon \ca{C}_A \rightarrow \ca{C}$ denotes the forgetful functor.
\end{lemma}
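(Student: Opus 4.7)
The plan is to rewrite the composite functor $\ca{C}(I, U-)$ as a representable functor on $\ca{C}_A$, after which the lemma will reduce to standard abelian-category generalities. Using the free-forgetful adjunction $A \otimes - \dashv U$ together with the unit isomorphism $A \otimes I \cong A$ in $\ca{C}_A$, I would produce a natural isomorphism
\[
\ca{C}(I, UM) \;\cong\; \ca{C}_A(A \otimes I, M) \;\cong\; \ca{C}_A(A, M)
\]
of $R$-modules, so that $\ca{C}(I, U-)$ is naturally isomorphic to the $R$-linear representable functor $\ca{C}_A(A, -) \colon \ca{C}_A \to \Mod_R$.

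From there I would invoke two standard characterizations in the abelian category $\ca{C}_A$ (which is abelian because limits and colimits in $\ca{C}_A$ are computed as in $\ca{C}$). First, the representable functor $\ca{C}_A(A, -)$ is faithful if and only if $A$ is a generator of $\ca{C}_A$. Second, any representable functor preserves arbitrary products, and in an abelian category finite products coincide with finite coproducts; thus right exactness of $\ca{C}_A(A, -)$ reduces to the preservation of epimorphisms (equivalently, of cokernels), which is precisely the condition that $A$ be projective in $\ca{C}_A$. Combining these two observations yields that $\ca{C}(I, U-)$ is faithful and right exact if and only if $A$ is a projective generator of $\ca{C}_A$, which by Definition~\ref{def:affine_algebra} is the assertion that $A$ is affine.

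There is no genuinely difficult step here; once the identification with $\ca{C}_A(A,-)$ is in hand, the proof is a bookkeeping exercise. The only minor thing to verify is that the adjunction isomorphism above respects the $R$-module structure on both sides, but this is automatic from the fact that $A \otimes - \dashv U$ is an adjunction between $R$-linear functors.
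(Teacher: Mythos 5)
Your proof is correct and takes exactly the same route as the paper, whose entire argument is the observation that $\ca{C}(I,U-) \cong \ca{C}_A(A,-)$ via the adjunction $A \otimes - \dashv U$, with the standard facts about representable functors (faithful iff generator, right exact iff projective, using left exactness to reduce to preservation of epimorphisms) left implicit. You have simply spelled out those standard facts.
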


\begin{proof}
 This follows from the natural isomorphism $\ca{C}_A(A,-) \cong \ca{C}(I,U-)$.
\end{proof}

\begin{lemma}\label{lemma:faithfully_flat_condition}
 Let $\ca{C}$ be a pre-geometric category, and let $A \in \ca{C}$ be a commutative algebra. Then $A$ is faithfully flat if and only if the unit morphism $\eta \colon I \rightarrow A$ is a monomorphism and the cokernel of $\eta$ is flat.
\end{lemma}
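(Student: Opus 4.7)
The plan is to exploit the short exact sequence $0 \to I \xrightarrow{\eta} A \to B \to 0$ (available as soon as $\eta$ is a monomorphism, with $B \defl \operatorname{coker}(\eta)$), combined with the Tor functors on any pre-geometric category (see Remark~\ref{rmk:flat_resolutions_hence_tame}), in order to shuttle flatness and faithfulness back and forth between the three objects $I$, $A$, and $B$.

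For the implication ($\Leftarrow$), assuming $\eta$ mono and $B$ flat, I would first show that $A$ itself is flat: since $I$ is trivially flat and $B$ is flat by hypothesis, the long exact Tor sequence associated to $0 \to I \to A \to B \to 0$ forces $\operatorname{Tor}_1(A,M)=0$ for every $M$, so $A$ is flat. Tensoring the exact sequence with an arbitrary $M$ then yields a short exact sequence $0 \to M \to A \otimes M \to B \otimes M \to 0$, which shows that $\eta \otimes M$ is a monomorphism and that $A \otimes M = 0$ forces $M = 0$. Together with exactness of $A \otimes -$, this upgrades to faithfulness: any morphism $f$ with $A \otimes f = 0$ has image $K$ killed by $A \otimes -$, hence $K = 0$.

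For the implication ($\Rightarrow$), assuming $A$ faithfully flat, the identity $\mu \circ (A \otimes \eta) = \id_A$ shows that $A \otimes \eta \colon A \to A \otimes A$ is a split monomorphism. Since an exact and faithful functor reflects monomorphisms (if $A \otimes f$ is mono then $A \otimes \ker(f) = 0$, so $\ker(f) = 0$), the map $\eta$ itself is mono. Setting $B \defl \operatorname{coker}(\eta)$, I would apply $A \otimes -$ to the resulting short exact sequence; flatness of $A$ preserves exactness, and the retraction $\mu$ of $A \otimes \eta$ splits the whole sequence, producing $A \otimes A \cong A \oplus (A \otimes B)$. As the tensor product of two flat objects is flat, $A \otimes A$ is flat, and its summand $A \otimes B$ inherits flatness. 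To descend flatness from $A \otimes B$ back to $B$, I would take a flat resolution $P_\bullet \to B$; flatness of $A$ makes $A \otimes P_\bullet \to A \otimes B$ a flat resolution as well, and exactness of $A \otimes -$ commuting with homology yields the natural isomorphism $\operatorname{Tor}_i(A \otimes B, M) \cong A \otimes \operatorname{Tor}_i(B, M)$. Since $A \otimes B$ is flat this gives $A \otimes \operatorname{Tor}_1(B, M) = 0$, and faithfulness of $A \otimes -$ finishes the job.

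The main delicate point is precisely this descent step at the end, transferring flatness from $A \otimes B$ down to $B$: it is the one place where both the flatness of $A$ (so that $\operatorname{Tor}$ of $A \otimes B$ is computed via $A$ tensored with a flat resolution of $B$) and the faithfulness of $A \otimes -$ (to conclude vanishing of $\operatorname{Tor}$ of $B$) are used in tandem.
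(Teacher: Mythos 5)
Your proof is correct, and it is essentially the argument the paper relies on: the paper simply cites Lurie's Lemma~5.5 for tame tensor categories (together with the Tor/extension machinery of Lemma~\ref{lemma:flatness_from_filtration}), and what you have written out --- extension-closure of flat objects for the converse, and the splitting $A \otimes A \cong A \oplus (A \otimes B)$ followed by the descent $\operatorname{Tor}_i(A\otimes B,-)\cong A\otimes\operatorname{Tor}_i(B,-)$ for the forward direction --- is exactly that cited argument made explicit. The details all check out, including the delicate descent step, which legitimately uses the ``good theory of Tor functors'' guaranteed by Remark~\ref{rmk:flat_resolutions_hence_tame}.
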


\begin{proof}
 Recall from Remark~\ref{rmk:flat_resolutions_hence_tame} that $\ca{C}$ is a tame abelian category in the sense of \cite[Remark~5.3]{LURIE}. The statement is proved for tame tensor categories in \cite[Lemma~5.5]{LURIE}. It also follows directly from the proof of Lemma~\ref{lemma:flatness_from_filtration} below.
\end{proof}

\section{Quasi-coherent sheaves on fiber products}\label{section:fiber}

\subsection{Proof of Theorem~\ref{thm:fiber_product}}\label{section:main_proof}
 In this section we will show that the pseudofunctor which sends an Adams stack to its category of finitely presentable quasi-coherent sheaves sends fiber products to bicategorical pushouts. Recall that $\ca{AS}$ denotes the 2-category of Adams stacks and $\ca{RM}$ denotes  the 2-category of right exact symmetric monoidal categories.

\begin{dfn}\label{def:tensor_product_rex}
 Let $\ca{A}$, $\ca{B}$, and $\ca{C}$ be right exact monoidal categories, and let $f \colon \ca{C} \rightarrow \ca{A}$ and $g \colon \ca{C} \rightarrow \ca{B}$ be right exact symmetric strong monoidal functors. Then we write $\ca{A} \boxten{\ca{C}} \ca{B}$ for the codescent object (or geometric realization, see for example \cite{LACK_CODESCENT} for a definition) of the truncated pseudosimplicial diagram
 \[
 \xymatrix{\ca{A} \boxtimes \ca{C} \boxtimes \ca{C} \boxtimes \ca{B} \ar@<1ex>[r] \ar[r] \ar@<-1ex>[r] & \ca{A} \boxtimes \ca{C} \boxtimes \ca{B} \ar@<0.75ex>[r] \ar@<-0.75ex>[r] & \ca{A} \boxtimes \ca{B} \ar[l] }
 \]
 in $\ca{RM}$, with faces induced by $f$, $g$, and the identity on $\ca{C}$, and degeneracy given by the unit object in $\ca{C}$.
\end{dfn}

\begin{thm}\label{thm:pushouts}
 The pseudofunctors
 \[
  \QCoh \colon \ca{AS}^{\op} \rightarrow \ca{T} \quad\text{and} \quad
  \QCoh_{\fp} \colon \ca{AS}^{\op} \rightarrow \ca{RM}
 \]
  send fiber products to bicategorical pushouts. In particular, there is an isomorphism
\begin{equation}\label{eqn:fiber_product_as_tensor}
   \QCoh_{\fp}(X \pb{Z} Y) \simeq \QCoh_{\fp}(X) \boxten{\QCoh_{\fp}(Z)} \QCoh_{\fp}(Y)
\end{equation}
  of symmetric monoidal $R$-linear categories.
\end{thm}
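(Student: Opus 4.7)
The plan is to decompose the fiber product using the diagonal of $Z$ and reduce to two already-available results: products of Adams stacks correspond to tensor products (established in \cite{SCHAEPPI_TENSOR}), and base change along affine morphisms is governed by commutative algebras (Proposition~\ref{prop:affine_functor_affine_algebra}). A preliminary reduction from $\ca{RM}$ to $\ca{T}$ uses Proposition~\ref{prop:geometric_equals_weakly_Tannakian}: the biequivalence given by $\Ind$ sends the codescent object of Definition~\ref{def:tensor_product_rex} to the analogous bicategorical pushout in $\ca{T}$, since by Lemma~\ref{lemma:pregeometric_implies_finitary} the 1-cells out of either side classify the same compatible pairs of symmetric strong monoidal functors. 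It therefore suffices to prove the $\QCoh$ statement in $\ca{T}$.

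For the diagonal decomposition, use $X \pb{Z} Y \simeq (X \times Y) \pb{Z \times Z} Z$. The diagonal $Z \to Z \times Z$ is affine by Totaro's theorem (recalled before Proposition~\ref{prop:affine_target_implies_affine}) since $Z$ is an Adams stack. The earlier result of \cite{SCHAEPPI_TENSOR} identifies $\QCoh$ of products of Adams stacks with bicategorical pushouts over $\Mod_R$. Bicategorical composition of pushouts then yields an equivalence
\[
\QCoh(X) \sqcup_{\QCoh(Z)} \QCoh(Y) \simeq \QCoh(X \times Y) \sqcup_{\QCoh(Z \times Z)} \QCoh(Z),
\]
where the second pushout is along the affine map $\QCoh(Z \times Z) \to \QCoh(Z)$. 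Thus the problem reduces to the case of a fiber square with one affine leg.

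Suppose $f : V \to W$ is affine and $g : U \to W$ is arbitrary. Proposition~\ref{prop:affine_functor_affine_algebra} identifies $\QCoh(V) \simeq \QCoh(W)_A$ for an affine algebra $A$, and similarly $\QCoh(U \pb{W} V) \simeq \QCoh(U)_{g^*A}$ since affineness is preserved under base change. A direct check of universal properties shows
\[
\QCoh(U) \sqcup_{\QCoh(W)} \QCoh(W)_A \simeq \QCoh(U)_{g^*A}
\]
in $\ca{T}$: a tensor functor out of the left pushout is precisely a tensor functor $\QCoh(U) \to \ca{D}$ together with a $g^*A$-algebra structure on the unit of $\ca{D}$, matching the universal property of the category of modules. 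Combining this with the diagonal step proves the theorem for $\QCoh$, and the $\QCoh_{\fp}$ statement follows by the initial reduction.

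The main obstacle is the bicategorical bookkeeping in the diagonal step: verifying that the identification $X \pb{Z} Y \simeq (X \times Y) \pb{Z \times Z} Z$ lifts to a genuine bicategorical pushout identification up to coherent equivalence, and that bicategorical pushouts compose correctly in $\ca{T}$. Some care is also needed to confirm that $\Ind$ preserves the specific codescent pushouts of Definition~\ref{def:tensor_product_rex}, rather than merely some coarser invariant. Once these formal 2-categorical facts are settled, the geometric substance of the theorem reduces cleanly to the product case from \cite{SCHAEPPI_TENSOR} and the affine algebra machinery of Section~\ref{section:affine}.
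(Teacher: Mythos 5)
Your overall strategy --- reduce the $\QCoh_{\fp}$ statement to the $\QCoh$ statement via $\Ind$ and Lemma~\ref{lemma:pregeometric_implies_finitary}, rewrite $X\pb{Z}Y$ as $(X\times Y)\pb{Z\times Z}Z$, invoke the product result of \cite{SCHAEPPI_TENSOR} for $X\times Y$ and $Z\times Z$, and compute the remaining pushout along the affine leg as a category of modules --- is exactly the strategy of the paper; the codescent-versus-pushout comparison is Proposition~\ref{prop:pushout_in_general_2_category} and the module-category universal property you sketch is Lemma~\ref{lemma:module_category_universal_property} / Proposition~\ref{prop:pushout_along_affine}.

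There is, however, a gap in the affine-leg step. First, Proposition~\ref{prop:affine_functor_affine_algebra} does not give $\QCoh(V)\simeq\QCoh(W)_A$ for an affine morphism $f\colon V\rightarrow W$ of stacks: it only treats tensor functors whose \emph{target} is $\Mod_B$ for a commutative ring $B$. What you need is Proposition~\ref{prop:affine_morphism_implies_affine_functor}, a genuinely nontrivial result whose proof occupies \S\ref{section:affine_morphism_implies_affine_functor} and uses descent along a covering together with the projection formula. Second, and more substantively, the assertion that $\QCoh(U\pb{W}V)\simeq\QCoh(U)_{g^{\ast}A}$ ``since affineness is preserved under base change'' presupposes the base-change identification $g^{\ast}f_{\ast}\mathcal{O}\cong f'_{\ast}\mathcal{O}$ for affine morphisms of $\fpqc$-stacks; at this level of generality that is not available for free, and together with Proposition~\ref{prop:pushout_along_affine} it carries the real content of Corollary~\ref{cor:pushout_along_affine_functor}, so asserting it risks circularity. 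The paper sidesteps it: one shows that the categorical pushout $\QCoh(U)_{g^{\ast}A}$ is again a \emph{geometric} category (Proposition~\ref{prop:closure_under_affine_functors}), so that it and $\QCoh(U\pb{W}V)$ are both bicategorical pushouts of the same span lying in the image of the biequivalence of Theorem~\ref{thm:geometric_equals_qc}, hence equivalent. To close your argument you must either prove the relative-Spec base-change statement for Adams stacks directly, or route through Proposition~\ref{prop:closure_under_affine_functors} as the paper does.
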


 Note that this theorem immediately implies Theorem~\ref{thm:fiber_product}.

\begin{proof}[Proof of Theorem~\ref{thm:fiber_product}]
 The claim of Theorem~\ref{thm:fiber_product} is exactly the existence of the equivalence~\eqref{eqn:fiber_product_as_tensor}.
\end{proof}

 To prove the equivalence in Formula~\eqref{eqn:fiber_product_as_tensor} from the first assertion, it suffices to prove that the tensor product defined in Definition~\ref{def:tensor_product_rex} is equivalent to the bicategorical pushout in $\ca{RM}$. Using the fact that $\boxtimes$ is a bicategorical coproduct (see \cite{SCHAEPPI_TENSOR}), this follows from the following general fact about 2-categories. 

\begin{prop}\label{prop:pushout_in_general_2_category}
 Let $\ca{K}$ be a 2-category with finite bicategorical coproducts (denoted by $+$) and bicategorical codescent objects of truncated pseudosimplicial diagrams. Let $f \colon C \rightarrow A$ and $g \colon C \rightarrow B$ be 1-cells in $\ca{K}$. Then the codescent object of the pseudosimplicial diagram
\begin{equation}\label{eqn:codescent_diagram}
 \xymatrix{A + C + C + B \ar@<1ex>[r] \ar[r] \ar@<-1ex>[r] & A + C + B \ar@<0.75ex>[r] \ar@<-0.75ex>[r] & A + B \ar[l] }
\end{equation}
 is a bicategorical pushout of $f$ along $g$. The faces in Diagram~\eqref{eqn:codescent_diagram} are built from $f$, $g$ and identities (using at most one of $f$ and $g$), and the degeneracy is given by the coproduct inclusion.
\end{prop}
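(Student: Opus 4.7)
The plan is to compare the universal properties of the two constructions directly: both the bicategorical pushout of $f$ along $g$ and the bicategorical codescent object of~\eqref{eqn:codescent_diagram} represent 2-functors $\ca{K}^{\op} \to \Cat$ via an equivalence of hom-categories, so it suffices to exhibit a natural equivalence between the representing 2-functors. To this end I fix an object $Y \in \ca{K}$ and apply $\ca{K}(-, Y)$ to the pseudosimplicial diagram. The universal property of the bicategorical coproduct then gives equivalences $\ca{K}(A + C + \cdots + C + B, Y) \simeq \ca{K}(A, Y) \times \ca{K}(C, Y)^n \times \ca{K}(B, Y)$ under which the two bottom cofaces send $(h_A, h_B)$ to $(h_A, h_A \circ f, h_B)$ and $(h_A, h_B \circ g, h_B)$ respectively, the codegeneracy is the projection, and the three level-two cofaces are described componentwise by $f$ on the first $C$, the codiagonal on $C + C$, and $g$ on the second $C$.

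By the universal property of the codescent object, $\ca{K}(P, Y)$ is the bilimit of this pseudo-cosimplicial diagram, so I would unpack it as a triple $(h_A, h_B, \alpha)$ with $\alpha$ a 2-iso between the two bottom cofaces, subject to a cocycle at level two and a normalization with respect to the degeneracy. Decomposing $\alpha$ by the coproduct structure of $A + C + B$ yields three components: those on $\ca{K}(A, Y)$ and $\ca{K}(B, Y)$ are forced to be identities by the normalization condition, so $\alpha$ amounts to a single 2-iso $\alpha_C \colon h_A \circ f \cong h_B \circ g$. Using the explicit componentwise form of the three level-two cofaces, a direct calculation shows that the cocycle condition reads as a trivial identity on each slot of the four-fold product $\ca{K}(A, Y) \times \ca{K}(C, Y)^2 \times \ca{K}(B, Y)$, and is therefore automatically satisfied. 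Hence the bilimit consists precisely of the data of a bicategorical pushout cocone on $f$ and $g$, which establishes the equivalence of universal properties.

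The main obstacle is bookkeeping rather than any substantive difficulty: one must track the simplicial identities and normalization conditions carefully enough to confirm that no coherence data survives beyond the single 2-iso $\alpha_C$. The essential input is that each level-two coface uses at most one of $f$, $g$, or the codiagonal on $C + C$, so that $\alpha_C$ is inserted into either one or both of the $C$-slots of the four-fold product without any interaction between distinct coproduct factors; the cocycle then splits into componentwise trivial identities.
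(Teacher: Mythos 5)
Your proposal is correct and follows essentially the same route as the paper's proof: both identify a cocone under the codescent diagram with a pair of 1-cells out of $A$ and $B$ together with a single invertible 2-cell $h_A \circ f \cong h_B \circ g$, by decomposing along the coproduct structure, using the degeneracy/normalization condition to trivialize the components over $A$ and $B$, and observing that the cocycle condition over $A + C + C + B$ holds automatically. The paper states this more tersely ("subject to two conditions ... not subject to any conditions"), while you spell out the componentwise bookkeeping; the content is the same.
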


\begin{proof}
 To give a cocone on this diagram amounts to giving a 1-cell $x \colon A + B \rightarrow X$ and an invertible 2-cell
 \[
 \xymatrix{A+C+B \xtwocell[1,1]{}\omit{\alpha} \ar[r] \ar[d] & A+B \ar[d]^x \\ A+B \ar[r]_-{x} & X}
 \]
 subject to two conditions. Using the universal property of coproducts one finds that this is equivalent to giving two 1-cells $f^{\prime} \colon B \rightarrow X$ and $g^{\prime} \colon A \rightarrow X$, together with an isomorphism
 \[
 \xymatrix{C \xtwocell[1,1]{}\omit{\beta} \ar[r]^{f} \ar[d]_{g} & A \ar[d]^{g^{\prime}} \\ B \ar[r]_-{f^{\prime}} & X}
 \]
 not subject to any conditions. Thus the codescent object of Diagram~\eqref{eqn:codescent_diagram} is indeed a bicategorical pushout.
\end{proof}

 Before proving Theorem~\ref{thm:pushouts}, we point out the following important corollary.

\begin{cor}\label{cor:bilimits}
 The pseudofunctors $\QCoh(-)$ and $\QCoh_{\fp}(-)$ preserve finite weighted bilimits.
\end{cor}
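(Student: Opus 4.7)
The plan is to bootstrap from Theorem~\ref{thm:pushouts} (preservation of fiber products) together with the result from \cite{SCHAEPPI_TENSOR} that $\QCoh$ sends products of Adams stacks to bicategorical coproducts in $\ca{T}$ and $\ca{RM}$ (i.e.\ to the Kelly tensor product), and the observation that the biterminal Adams stack $\Spec R$ is sent to the biinitial tensor category $\Mod_R$. These three results together give preservation of a basic family of bilimits: biproducts, bipullbacks, and the biterminal. I would then invoke a standard 2-categorical generation result (see e.g.\ Street's treatment of bicategorical limits, or Kelly's enriched formulation applied to $\Cat$-enrichment) which asserts that every finite weighted bilimit in a 2-category can be constructed from a small list of basic bilimits: finite biproducts, bipullbacks, and bicotensors with the arrow category $\mathbf{2}$. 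Dually, finite weighted bicolimits are generated by finite bicoproducts, bipushouts, and bitensors with $\mathbf{2}$, with Proposition~\ref{prop:pushout_in_general_2_category} serving as the prototype of the kind of "codescent" reduction one needs.

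For the conical part of this generation, the work above suffices. The remaining ingredient is the bicotensor/bitensor with $\mathbf{2}$. Here the cleanest route is via the biequivalence from Theorem~\ref{thm:geometric_equals_qc}: $\QCoh$ restricts to a biequivalence between $\ca{AS}^{\op}$ and the full sub-2-category of geometric tensor categories in $\ca{T}$ (and similarly between $\ca{AS}^{\op}$ and the weakly Tannakian categories in $\ca{RM}$ via Proposition~\ref{prop:geometric_equals_weakly_Tannakian}). Since any biequivalence trivially preserves every bilimit and bicolimit that exists in its source, the content of the corollary reduces to a closure statement: the sub-2-category of geometric tensor categories is closed in $\ca{T}$ (resp.\ in $\ca{RM}$) under finite weighted bilimits. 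Closure under finite biproducts and bipullbacks is exactly what Theorem~\ref{thm:pushouts} and the earlier preservation result supply (the ambient bipushout and bicoproduct in $\ca{T}$ agree with $\QCoh$ of the corresponding fiber product or product in $\ca{AS}$, and hence are automatically geometric). Closure under bicotensors with $\mathbf{2}$ can be established by realizing $\ca{C}^{\mathbf{2}}$ as an iso-comma object, i.e.\ as a bipullback along the diagonal, which reduces to the bipullback case already handled.

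The main obstacle is precisely this bicotensor-with-$\mathbf{2}$ component, as neither Theorem~\ref{thm:pushouts} nor \cite{SCHAEPPI_TENSOR} speaks to it directly: the 2-cell aspect of weighted bilimits is not captured by fiber products and products alone. The biequivalence-based argument circumvents this by transferring the problem from explicit preservation to a closure statement, where only bipullback-style constructions intervene. In outline: conical bilimits are handled by the explicit theorems; the 2-cell data is handled by the biequivalence together with an iso-comma/bipullback reduction.
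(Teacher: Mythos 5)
There is a genuine gap, and it sits exactly where you locate the ``main obstacle.'' Your conical part is fine and matches the paper: Theorem~\ref{thm:pushouts} gives bipullbacks, the coproduct result from \cite{SCHAEPPI_TENSOR} gives biproducts, and $\QCoh(\Spec R)\simeq\Mod_R$ handles the biterminal object. The problem is your treatment of the 2-cell data. The bicotensor $X^{\mathbf{2}}$ with the arrow category is the \emph{comma} object of the identity along itself: it classifies \emph{arbitrary} 2-cells between 1-cells into $X$. The iso-comma object (a bipullback) classifies only \emph{invertible} 2-cells. These coincide precisely when every 2-cell into $X$ is invertible, and that is a genuinely nontrivial fact which your argument silently assumes rather than proves. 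It is the centerpiece of the paper's proof: monoidal natural transformations between tensor functors out of a geometric category are invertible because their components at objects with duals are invertible and such objects generate (``duals invert,'' cf.\ \cite[Proposition~5.2.3]{SAAVEDRA}). Without this input, identifying $\ca{C}^{\mathbf{2}}$ with a bipullback is simply false, and your reduction collapses.

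There is a second conflation worth flagging. Once invertibility of 2-cells \emph{is} established, the cotensor $X^{\mathbf{2}}$ becomes equivalent to $X^{\mathbf{I}}\simeq X$ (the walking isomorphism is equivalent to the point), so it is trivially preserved and needs no bipullback at all. The bipullback of the diagonal along itself is not $X^{\mathbf{2}}$ but $X^{\Aut}$, the inertia stack, i.e.\ the power by the one-object groupoid with automorphism group $\mathbb{Z}$ --- and this is where the actual content lies. The paper's route is: invertibility of 2-cells reduces powers by finitely presentable categories to powers by finitely presentable groupoids; every such groupoid is a finite colimit of copies of $\Aut$; and $X^{\Aut}$ is the bipullback of $\Delta$ along $\Delta$, which Theorem~\ref{thm:pushouts} handles. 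Your decomposition into ``conical bilimits plus cotensors with $\mathbf{2}$'' misses the $\Aut$-power entirely once $X^{\mathbf{2}}$ has been trivialized, so even after the invertibility fact is supplied your argument does not account for the genuinely non-conical finite bilimits. To repair the proof you need both the ``duals invert'' lemma and the groupoid/$\Aut$ reduction in place of the iso-comma step.
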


\begin{proof}
 Note that the $\QCoh\bigr(\Spec(R)\bigl) \simeq \Mod_R$ is the initial $R$-linear tensor category, so both pseudofunctors send the terminal object to the initial object. Thus it follows from Theorem~\ref{thm:pushouts} that $\QCoh(-)$ and $\QCoh_{\fp}(-)$ preserve finite conical limits (that is, they send them to finite conical colimits). Indeed, the bicategorical pullback diagram
 \[
 \xymatrix{ E \ar[r] \ar[d] & Y \ar[d]^{\Delta} \\ X \ar[r]_-{(f,g)} & Y\times Y }
 \]
 gives the bicategorical equalizer of $f,g \colon X \rightarrow Y$.
 
 To get preservation all weighted colimits it therefore suffices to show that the pseudofunctors also preserves powers (also known as cotensors) of finitely presentable categories (see \cite{STREET_CORRECTIONS}). Since natural transformations between tensor functors whose domain is a geometric category are always invertible (see \cite[Proposition~5.2.3]{SAAVEDRA}), it suffices to prove this for finitely presentable groupoids. This follows from the fact that ``duals invert:'' the component of a monoidal natural transformation at an object with dual is invertible, and the claim follows since any object can be built out of objects with duals using (finite) colimits. 

 Thus it suffices to show that the power $X^G$ is preserved where $X \in \ca{AS}$ and $G$ is a finitely presentable groupoid. Since every such groupoid can be built as a finite colimit from the groupoid $\Aut$ (consisting of a single object with automorphism group $\mathbb{Z}$), it suffices to consider the case $G=\Aut$. The power $X^{\Aut}$ classifies automorphisms of 1-cells with codomain $X$ and is also known as the \emph{inertia stack} of $X$. The inertia stack can be computed as pullback of the diagonal along itself. Thus preservation of pullbacks implies preservation of powers by finitely presentable groupoids. 
\end{proof}

 The proof of Theorem~\ref{thm:pushouts} follows the same strategy used in \cite{SCHAEPPI_TENSOR} to show that the pseudofunctor $\QCoh_{\fp}(-)$ sends products to coproducts. However, since general pushouts are not as tractable as coproducts, we prove this result in two steps. First we study pushouts along affine functors. The case of general fiber products reduces to this: the fiber product of $f$ and $g$ can be computed as the pullback
 \[
 \xymatrix{X \pb{Z} Y \ar[r] \ar[d] & Z \ar[d]^{\Delta} \\ X \times Y \ar[r]_{f\times g} & Z \times Z}
 \]
 of $f \times g$ along the diagonal, and Adams stacks have affine diagonal. More precisely, we use the following (sometimes technical) results.

 Recall from Definition~\ref{def:affine_functor} that an affine functor is a tensor functor which is equivalent to the free $A$-module functor $A \otimes - \colon \ca{C} \rightarrow \ca{C}_A$ for some commutative algebra $A$ in $\ca{C}$. The following proposition therefore allows us to compute pushouts of tensor categories along affine functors. We prove it in \S \ref{section:pushout_along_affine}.
 
\begin{prop}\label{prop:pushout_along_affine}
 Let $F \colon \ca{C} \rightarrow \ca{D}$ be a tensor functor, and let $A \in \ca{C}$ be a commutative algebra. Then the diagram
 \[
 \xymatrix{\ca{C}_A  \ar[r]^-{\overline{F}} & \ca{D}_{FA} \\ \ca{C} \ar[u]^{A\otimes -} \ar[r]_-F & \ca{D} \ar[u]_{FA\otimes-}}
 \] 
 is a bicategorical pushout, where $\overline{F}$ denotes the functor which sends an $A$-module $M$ to the $FA$-module $FM$.
\end{prop}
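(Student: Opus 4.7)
The strategy is to reduce the pushout claim to a universal property of the module category $\ca{C}_A$ in the 2-category $\ca{T}$. For any tensor category $\ca{E}$, I would first establish that the assignment $K \mapsto \bigl(K \circ (A \otimes -),\, K(\mu_A)\bigr)$ yields an equivalence of categories between $\ca{T}(\ca{C}_A, \ca{E})$ and the category whose objects are pairs $(H, m)$ consisting of a tensor functor $H \colon \ca{C} \to \ca{E}$ together with a morphism $m \colon HA \to I_{\ca{E}}$ of commutative algebras in $\ca{E}$. Here $\mu_A \colon A \otimes_{\ca{C}} A \to A$ is the multiplication, regarded as a morphism in $\ca{C}_A$ from the free $A$-module $A \otimes A$ to the unit $A \in \ca{C}_A$. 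A quasi-inverse sends $(H, m)$ to the tensor functor $M \mapsto HM \otimes_{HA} I_{\ca{E}}$, where $I_{\ca{E}}$ acquires an $HA$-algebra structure from $m$. Under this correspondence, $\overline{F} \colon \ca{C}_A \to \ca{D}_{FA}$ is associated to the pair $(F, \mu_{FA})$.

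Granting this universal property, verifying that the square is a bicategorical pushout becomes essentially a routine unwinding. A cocone with apex $\ca{E}$ consists of tensor functors $K_1 \colon \ca{D} \to \ca{E}$ and $K_2 \colon \ca{C}_A \to \ca{E}$ together with a monoidal isomorphism $\alpha \colon K_1 F \cong K_2 \circ (A \otimes -)$. Applying the universal property to $K_2$ replaces it with a pair $(H, m)$ where $H = K_2 \circ (A \otimes -)$ and $m \colon HA \to I_{\ca{E}}$. Transporting $m$ along $\alpha$ converts the cocone data into a tensor functor $K_1 \colon \ca{D} \to \ca{E}$ equipped with an algebra map $K_1(FA) \to I_{\ca{E}}$; but by the same universal property applied this time to $\ca{D}_{FA}$, such data classify exactly the tensor functors $\ca{D}_{FA} \to \ca{E}$. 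The 2-cells match under the same correspondence, so the two cocone categories are equivalent, which is the desired pushout property.

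The main technical obstacle is therefore the universal property of $\ca{C}_A$ in $\ca{T}$. I would deduce it from Beck's monadicity theorem applied to the adjunction $A \otimes - \dashv U \colon \ca{C}_A \to \ca{C}$, noting that $A \otimes -$ is cocontinuous as a left adjoint and that $\ca{C}_A$ inherits all colimits from $\ca{C}$, combined with the observation that commutativity of $A$ endows the induced monad with a symmetric monoidal structure. Tensor functors out of the Eilenberg--Moore category of a symmetric monoidal monad then correspond to tensor functors out of $\ca{C}$ equipped with a compatible algebra map to the unit; this is a folklore result which is worked out in detail in Brandenburg's thesis (to which the acknowledgments refer for an independent proof of the present proposition).
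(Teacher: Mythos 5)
Your proposal is correct and follows essentially the same route as the paper: the key step in both is the universal property identifying tensor functors $\ca{C}_A \to \ca{E}$ with pairs $(H,m)$ of a tensor functor $H\colon \ca{C}\to\ca{E}$ and an algebra map $m\colon HA \to I_{\ca{E}}$ (the paper's Lemma on module categories, proved there via the canonical coequalizer presentation of $A$-modules, i.e.\ exactly your monadicity argument, with the same quasi-inverse $M \mapsto I \ten{HA} HM$), after which the pushout follows by matching cocone data. The only difference is presentational: the paper makes your "routine unwinding" precise by exhibiting both cocone categories as strict pullbacks along isofibrations, which is how one verifies that the comparison respects 2-cells and is 2-natural in $\ca{E}$.
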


 If $f \colon X \rightarrow Z$ and $g \colon Y \rightarrow Z$ are morphisms in $\ca{AS}$, then
 \[
 \xymatrix{ \QCoh(X) \ar[r] & \QCoh(X \pb{Z} Y) \\ \QCoh(Z) \ar[u]^{f^{\ast}} \ar[r]_{g^{\ast}} & \QCoh(Y) \ar[u] }
 \]
 is a pushout square \emph{in the image of} $\QCoh(-)$ by Theorem~\ref{thm:geometric_equals_qc}. If $f^{\ast}$ is an affine functor, we can compute the pushout among \emph{all} tensor categories using Proposition~\ref{prop:pushout_along_affine}. Thus, to show that the diagram above is a pushout among all tensor categories it suffices to prove the following proposition, which we do in \S \ref{section:closure_under_affine_functors}.
 
 \begin{prop}\label{prop:closure_under_affine_functors}
 Let $\ca{C}$ be a geometric category, $\ca{D}$ an arbitrary tensor category, and let $F \colon \ca{C} \rightarrow \ca{D}$ be a cohomologically affine functor. Then $\ca{D}$ is geometric.
\end{prop}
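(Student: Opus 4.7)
The strategy is to identify $\ca{D}$ with a module category over $\ca{C}$ and then extend a faithfully flat covering of $\ca{C}$ along this identification. First, I would apply Proposition~\ref{prop:cohomologically_affine_implies_affine} to conclude that $F$ is affine, so that $\ca{D} \simeq \ca{C}_A$ for the commutative algebra $A = U(I)$. The remark following Notation~\ref{notation:module_category} then immediately shows that $\ca{D}$ is pre-geometric, leaving only the construction of a faithfully flat covering.

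Next, fix a faithfully flat covering $w \colon \ca{C} \to \Mod_B$ of $\ca{C}$, observe that $w(A)$ is a commutative $B$-algebra (and hence a commutative $R$-algebra), and define the candidate covering
\[
\overline{w} \colon \ca{C}_A \longrightarrow \Mod_{w(A)}, \qquad M \mapsto w(M),
\]
with $w(A)$-action induced from $w$. Faithful-exactness of $\overline{w}$ would follow from a diagram chase in the square of forgetful functors
\[
\xymatrix{\ca{C}_A \ar[r]^-{\overline{w}} \ar[d] & \Mod_{w(A)} \ar[d] \\ \ca{C} \ar[r]_-{w} & \Mod_B}
\]
using that the left vertical is faithful and exact (since $F$ is cohomologically affine), that $w$ is faithful and exact by hypothesis, and that the right vertical is faithful and reflects short exact sequences; so exactness and faithfulness of the composite $w \circ U$ transfer to $\overline{w}$.

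The hard part will be verifying that $\overline{w}$ is actually a tensor functor, i.e., symmetric strong monoidal and cocontinuous. Strong monoidality should follow from the coequalizer description of the tensor products in $\ca{C}_A$ and $\Mod_{w(A)}$ together with strong monoidality and cocontinuity of $w$; cocontinuity of $\overline{w}$ can itself be verified after applying the two forgetful functors, which preserve and reflect all small colimits. A cleaner alternative that avoids these direct checks is to invoke Proposition~\ref{prop:pushout_along_affine}, which identifies the above square as a bicategorical pushout in the $2$-category of tensor categories, and hence automatically produces $\overline{w}$ as a tensor functor. Either route yields the desired faithfully flat covering $\overline{w} \colon \ca{D} \to \Mod_{w(A)}$, so $\ca{D}$ is geometric.
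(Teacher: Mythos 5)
Your proof is correct, and while it shares its opening moves with the paper's argument, the construction of the covering takes a genuinely different route. Both proofs begin by invoking Proposition~\ref{prop:cohomologically_affine_implies_affine} to identify $\ca{D}$ with $\ca{C}_A$ for $A=U(I)$ and deduce that $\ca{D}$ is pre-geometric from the module-category description. For the covering, the paper works on the algebra side: it fixes a faithfully flat \emph{affine} algebra $A'\in\ca{C}$ via Corollary~\ref{cor:faithfully_flat_coverings_vs_algebras} and shows that $FA'$ is again a faithfully flat affine algebra in $\ca{D}$. The delicate point there is faithful flatness of $FA'$ (Lemma~\ref{lemma:faithfully_flat_stable}), which requires writing $0\to I\to A'\to A'/I\to 0$ as a filtered colimit of exact sequences of dualizable objects whose dual sequences are also exact, so that $F$ preserves exactness; affineness of $FA'$ is then a separate diagram chase. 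You instead base-change the given covering $w\colon\ca{C}\to\Mod_B$ along $A$ to get $\overline{w}\colon\ca{C}_A\to\Mod_{w(A)}$ and verify faithfulness and exactness through the square of forgetful functors: since $V\overline{w}\cong wU$ is faithful and exact and $V$ is faithful and reflects exactness (kernels and cokernels of $w(A)$-modules are computed in $\Mod_B$), $\overline{w}$ inherits both properties. This bypasses the filtered-colimit argument entirely, and your appeal to Proposition~\ref{prop:pushout_along_affine} is exactly the right way to avoid checking strong monoidality and cocontinuity of $\overline{w}$ by hand. (One small bookkeeping point: the forgetful functor $\ca{C}_A\to\ca{C}$ is automatically faithful and exact; what cohomological affineness buys is that $U$ \emph{is} this forgetful functor under the equivalence $\ca{D}\simeq\ca{C}_A$.) The two coverings agree up to equivalence --- both amount to $\ca{D}\simeq\ca{C}_A\to\ca{C}_{A\otimes A'}$ --- but your verification is the more elementary one; what the paper's route buys in exchange is Lemma~\ref{lemma:faithfully_flat_stable} as a standalone statement, which it reuses in the proof of Proposition~\ref{prop:affine_morphism_implies_affine_functor}.
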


\begin{rmk}
 Let $\ca{A}$ be a Tannakian category and let $A \in \Ind(\ca{A})$ be a commutative algebra. Proposition~\ref{prop:closure_under_affine_functors} implies that the category of $A$-modules is the category of quasi-coherent sheaves on an Adams stack. Moreover, note that tensor functors between module categories which are compatible with the free functors are induced by unique morphisms of algebras (see Lemma~\ref{lemma:module_category_universal_property}).

 This allows us to identify the category of affine $\ca{A}$-schemes defined in \cite[\S 7.8]{DELIGNE} (that is, the opposite of the category of commutative algebras in $\Ind(\ca{A})$) with a subcategory of Adams stacks over the gerbe associated to $\ca{A}$.
\end{rmk}

 As mentioned above, Propositions~\ref{prop:pushout_along_affine} and \ref{prop:closure_under_affine_functors} have the following consequence.
 
 \begin{cor}\label{cor:pushout_along_affine_functor}
 The pseudofunctor $\QCoh \colon \ca{AS}^{\op} \rightarrow \ca{T}$ preserves pullbacks along morphisms $f \colon X \rightarrow Z$ with the property that $f^{\ast}$ is an affine functor.
\end{cor}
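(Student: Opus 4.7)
The plan is to combine Propositions~\ref{prop:pushout_along_affine} and~\ref{prop:closure_under_affine_functors} with the biequivalence of Theorem~\ref{thm:geometric_equals_qc}. Fix $f \colon X \to Z$ in $\ca{AS}$ with $f^{\ast}$ affine and an arbitrary $g \colon Y \to Z$ in $\ca{AS}$; I want to exhibit $\QCoh(X \pb{Z} Y)$ as a bicategorical pushout of $f^{\ast}$ and $g^{\ast}$ in $\ca{T}$.

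First I would use Definition~\ref{def:affine_functor} (together with Proposition~\ref{prop:affine_functor_affine_algebra} and Remark~\ref{rmk:algebra_from_affine_functor}) to realize $f^{\ast}$, up to equivalence, as the free-module functor $A \otimes - \colon \QCoh(Z) \to \QCoh(Z)_{A}$ for a suitable commutative algebra $A \in \QCoh(Z)$. Applying Proposition~\ref{prop:pushout_along_affine} with $\ca{C} = \QCoh(Z)$, $\ca{D} = \QCoh(Y)$ and $F = g^{\ast}$ then identifies the bicategorical pushout of $f^{\ast}$ and $g^{\ast}$ in $\ca{T}$ with $\QCoh(Y)_{g^{\ast}A}$, the right-hand leg being the free $g^{\ast}A$-module functor. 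By Example~\ref{example:free_module_affine} that free-module functor is cohomologically affine, and since $\QCoh(Y)$ is geometric hence pre-geometric, Proposition~\ref{prop:closure_under_affine_functors} implies that the pushout $\QCoh(Y)_{g^{\ast}A}$ is itself a geometric tensor category.

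With the pushout in hand and geometric, I would transport the universal property across the biequivalence between $\ca{AS}^{\op}$ and the full sub-2-category of geometric tensor categories inside $\ca{T}$. Because the pushout already lies in this sub-2-category, its universal property in $\ca{T}$ restricts tautologically to the same universal property within geometric categories, so it is also a bicategorical pushout there. Any biequivalence preserves bicolimits, so pulling back under the biequivalence yields a bicategorical pullback in $\ca{AS}$, which by the universal property of $X \pb{Z} Y$ must be equivalent to it. The resulting equivalence $\QCoh(X \pb{Z} Y) \simeq \QCoh(Y)_{g^{\ast}A}$ is exactly the statement that $\QCoh$ sends the pullback square defined by $f$ and $g$ to a bicategorical pushout in $\ca{T}$.

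The substantive work has already been done in Propositions~\ref{prop:pushout_along_affine} and~\ref{prop:closure_under_affine_functors}, so the corollary itself is largely formal. The one spot I would verify with care is that the pushout square produced by Proposition~\ref{prop:pushout_along_affine} indeed matches the image under $\QCoh$ of the pullback square defining $X \pb{Z} Y$; this amounts to checking that $g^{\ast}A$ is the algebra classifying the base-changed affine morphism with target $Y$, a routine verification via Proposition~\ref{prop:affine_functor_affine_algebra} and the naturality of the right adjoint construction $A \mapsto U(I)$.
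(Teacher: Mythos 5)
Your argument is correct and follows the paper's proof essentially verbatim: Proposition~\ref{prop:pushout_along_affine} computes the pushout in $\ca{T}$, Proposition~\ref{prop:closure_under_affine_functors} shows it is geometric, and the biequivalence of Theorem~\ref{thm:geometric_equals_qc} identifies it with $\QCoh(X \pb{Z} Y)$ since both satisfy the same universal property among geometric categories. The final verification you flag is indeed routine and is not spelled out in the paper either.
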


\begin{proof}
 Both $\QCoh(X \pb{Z} Y)$ and the bicategorical pushout computed in Proposition~\ref{prop:pushout_along_affine} are bicategorical pushouts in the image of $\QCoh(-)$ (the former since $\QCoh(-)$ is an embedding, the latter by Proposition~\ref{prop:closure_under_affine_functors}). Since they have the same universal property, they must be equivalent.
\end{proof}

 Now, to conclude the proof of Theorem~\ref{thm:pushouts}, we would like to apply this reasoning to the diagonal morphism of an Adams stack. To do this, we need to know that a morphism $f$ of Adams stacks is affine if and only if $f^{\ast}$ is an affine functor. By Proposition~\ref{prop:cohomologically_affine_implies_affine}, it suffices to check that $f^{\ast}$ is cohomologically affine. For Artin stacks, this is proved in \cite[Proposition~3.9]{ALPER}, using some standard results like flat base change. We give a direct proof using the above corollary in \S \ref{section:affine_morphism_implies_affine_functor}.

\begin{prop}\label{prop:affine_morphism_implies_affine_functor}
 Let $f \colon X \rightarrow Y$ be an affine morphism between Adams stacks. Then the inverse image functor
\[
f^{\ast } \colon  \QCoh(Y) \rightarrow \QCoh(X)
\] 
 is an affine functor. 
\end{prop}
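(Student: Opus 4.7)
The plan is to reduce everything to Proposition~\ref{prop:cohomologically_affine_implies_affine}: since $\QCoh(Y)$ is geometric (hence pre-geometric) by Theorem~\ref{thm:geometric_equals_qc}, it suffices to prove that $f^{\ast}$ is cohomologically affine, i.e.\ that its right adjoint $f_{\ast}$ is faithful and right exact. I reduce this to the analogous statement for a morphism between affine schemes, and then descend along a faithfully flat cover of $Y$.

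Concretely, I pick a faithfully flat morphism $g \colon Z \to Y$ with $Z = \Spec B$ affine. Since $f$ is an affine morphism of stacks, the fiber product $X' \defl X \pb{Y} Z$ is itself affine, say $X' = \Spec B'$; write $\bar{f} \colon X' \to Z$ and $\bar{g} \colon X' \to X$ for the canonical projections, and note that $\bar{g}$ is again faithfully flat by stability of faithful flatness under base change. The inverse image $g^{\ast} \colon \QCoh(Y) \to \Mod_{B}$ is an affine functor by Proposition~\ref{prop:affine_target_implies_affine}, so Corollary~\ref{cor:pushout_along_affine_functor} identifies the fiber product square of stacks with a bicategorical pushout in $\ca{T}$. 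Combined with Proposition~\ref{prop:pushout_along_affine}, this gives an equivalence $\QCoh(X') \simeq \QCoh(X)_{f^{\ast} A}$, where $A \in \QCoh(Y)$ is the commutative algebra corresponding to $g$, in such a way that $\bar{g}^{\ast}$ is identified with the free module functor; in particular $\bar{g}^{\ast}$ is cohomologically affine. On the other hand, $\bar{f}^{\ast} \colon \Mod_{B} \to \Mod_{B'}$ is extension of scalars along the ring homomorphism classifying $\bar{f}$, and is cohomologically affine as well, its right adjoint being the exact and faithful restriction-of-scalars functor.

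The crux is flat base change along $g$: the canonical isomorphism $\bar{g}^{\ast} f^{\ast} \cong \bar{f}^{\ast} g^{\ast}$ coming from the commutative square admits a mate $g^{\ast} f_{\ast} \to \bar{f}_{\ast} \bar{g}^{\ast}$, which I claim is an isomorphism. I would verify this by presenting the Adams stack $Y$ via a flat Hopf algebroid and reducing the statement to ordinary flat base change over affine schemes via the comodule description of $\QCoh$. Granting this, the composite $g^{\ast} f_{\ast} \cong \bar{f}_{\ast} \bar{g}^{\ast}$ is faithful and right exact, as it is the composite of the faithful and right exact functor $\bar{f}_{\ast}$ with the faithful exact functor $\bar{g}^{\ast}$.

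Finally, I descend along $g$. Since $g$ is faithfully flat, $g^{\ast}$ itself is faithful and exact and hence reflects epimorphisms and zero objects. Given an epimorphism $p$ in $\QCoh(X)$, the map $g^{\ast} f_{\ast} p$ is epi by right exactness of the composite, so $f_{\ast} p$ is epi; similarly, if $f_{\ast} M = 0$ then $g^{\ast} f_{\ast} M = 0$, forcing $M = 0$ by faithfulness of the composite. Thus $f_{\ast}$ is faithful and right exact, so $f^{\ast}$ is cohomologically affine, and Proposition~\ref{prop:cohomologically_affine_implies_affine} yields that $f^{\ast}$ is affine. The main obstacle in this plan is the flat base change step, since flat base change along a morphism of Adams stacks is not developed in the earlier sections; every other step is an essentially immediate application of the machinery from Sections~\ref{section:geometric} and~\ref{section:fiber}.
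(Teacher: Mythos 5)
Your overall strategy coincides with the paper's: reduce to showing that $f^{\ast}$ is cohomologically affine, use Corollary~\ref{cor:pushout_along_affine_functor} and Proposition~\ref{prop:pushout_along_affine} to identify $\QCoh(X\pb{Y}Y_0)$ with $\QCoh(X)_{f^{\ast}A}$, establish a flat base change (Beck--Chevalley) isomorphism along the faithfully flat cover, and then descend faithfulness and right exactness of $f_{\ast}$. The final descent step and the identifications of $\bar{g}^{\ast}$ with the free $f^{\ast}A$-module functor and of $\bar{f}_{\ast}$ with a restriction-of-scalars functor are all correct.

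The gap is precisely the step you flag yourself: the base change isomorphism $g^{\ast}f_{\ast}\cong \bar{f}_{\ast}\bar{g}^{\ast}$ is asserted rather than proved, and it is the entire technical content of the proposition. Your proposed fix (present $Y$ by a flat Hopf algebroid and reduce to ordinary flat base change over affine schemes via the comodule description) is not an immediate reduction: to express $f_{\ast}$ in comodule terms one essentially needs to know that $\QCoh(X)$ is the category of modules over a quasi-coherent algebra on $Y$, which is the conclusion being proved, so the argument risks circularity, and the requisite base change for cotensor products over flat Hopf algebroids is nowhere developed in the paper. The paper instead proves the base change map internally. Under the identifications above it becomes the projection-formula map $\chi^{f^{\ast}}_{A,N}\colon A\otimes f_{\ast}N \rightarrow f_{\ast}(f^{\ast}A\otimes N)$. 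The composite $(f^{\ast}A\otimes -)\circ f^{\ast}$ and the functor $f^{\ast}A\otimes -$ both have pre-geometric domain and affine codomain, hence are coclosed by Propositions~\ref{prop:affine_target_implies_affine} and~\ref{prop:cohomologically_affine_implies_affine}; Lemma~\ref{lemma:composition_coclosed} then shows that $\chi^{f^{\ast}}_{A,VM}$ is invertible for $M$ in the affine codomain, where $V$ denotes the right adjoint of $f^{\ast}A\otimes -$; and since $f^{\ast}A$ is faithfully flat (Lemma~\ref{lemma:faithfully_flat_stable}) that adjunction is comonadic, so every object of $\QCoh(X)$ is an equalizer of objects in the image of $V$ and both sides of $\chi^{f^{\ast}}_{A,-}$ preserve equalizers. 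You should either reproduce this argument or actually carry out the Hopf-algebroid reduction; as written, the proof is incomplete at its crucial point.
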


 With all the ingredients in place, we are now ready to prove Theorem~\ref{thm:pushouts}.

\begin{proof}[Proof of Theorem~\ref{thm:pushouts}.]
 The (bicategorical) pullback of $f \colon X \rightarrow Z$ along $g\colon Y \rightarrow Z$ in the 2-category of Adams stacks can be computed as the pullback
 \[
 \xymatrix{X \pb{Z} Y \ar[r] \ar[d] & Z \ar[d]^{\Delta} \\ X \times Y \ar[r]_{f\times g} & Z \times Z}
 \]
 of $f\times g$ along the diagonal of $Z$ in the 2-category of \emph{all} $\fpqc$-stacks. Indeed, since $Z$ is an Adams stack, it is the stack associated to a flat affine groupoid. Therefore it has affine diagonal $\Delta$, and it follows that $X \pb{Z} Y$ is again an Adams stack (see \cite[Corollary~4.7]{SCHAEPPI_TENSOR}).
 
 Moreover, $\Delta^{\ast}$ is an affine functor by Proposition~\ref{prop:affine_morphism_implies_affine_functor}. By Corollary~\ref{cor:pushout_along_affine_functor}, $\QCoh(-)$ preserves the above pullback diagram. It remains to check that the resulting pushout in $\ca{T}$ is the pushout of $f^{\ast}$ along $g^{\ast}$. To do this, it suffices to check that $\QCoh(Z\times Z)$ is equivalent to the bicategorical coproduct of $\QCoh(Z)$ with itself, and that $\Delta^{\ast}$ is the codiagonal morphism.
 
 We know from \cite[Theorems~1.7 and 5.2]{SCHAEPPI_TENSOR} that $\QCoh_{\fp}(-)$ sends binary products to binary coproducts in the 2-category $\ca{RM}$ of right exact symmetric monoidal categories. Note that tensor functors out of a pre-geometric category have to preserve finitely presentable objects (see Lemma~\ref{lemma:pregeometric_implies_finitary}). Using this, it follows that $\QCoh(Z\times Z)$ is a bicategorical coproduct in the 2-category $\ca{T}$ of tensor categories. This concludes the proof that $\QCoh(-)$ preserves fiber products.
 
 To prove that $\QCoh_{\fp} \colon \ca{AS}^{\op} \rightarrow \ca{RM}$ has the same property, we can again appeal to Lemma~\ref{lemma:pregeometric_implies_finitary}: the equivalence
 \[
 \ca{RM}(\QCoh_{\fp}(X),\ca{A}) \simeq  \ca{T}\bigr(\QCoh(X),\Ind(\ca{A})\bigl)
 \]
 shows that $\QCoh_{\fp}(-)$ preserves all the limits that $\QCoh(-)$ preserves. 

 As already mentioned at the beginning of this section, the equivalence
 \[
 \QCoh_{\fp}(X \pb{Z} Y) \simeq \QCoh_{\fp}(X) \boxten{\QCoh_{\fp}(Z)} \QCoh_{\fp}(Y)
 \]
 of symmetric monoidal categories now follows from Proposition~\ref{prop:pushout_in_general_2_category}.
\end{proof}

 As a corollary of the proof of Theorem~\ref{thm:pushouts} we obtain a proof of Serre's criterion for affine morphisms between Adams stacks.

\begin{cor}\label{cor:serres_criterion}
 A morphism of Adams stacks is affine if and only if $f^{\ast}$ is a cohomologically affine functor, that is, if $f_{\ast}$ is exact and faithful.
\end{cor}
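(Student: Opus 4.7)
My plan is to prove both directions using results already established. The forward direction is immediate: by Proposition~\ref{prop:affine_morphism_implies_affine_functor}, an affine morphism $f$ induces an affine functor $f^{\ast}$, which is in particular cohomologically affine; that is, $f_{\ast}$ is faithful and right exact, hence exact since it is also left exact as a right adjoint.

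For the converse, I would first translate the categorical hypothesis into an algebra. Suppose $f^{\ast}$ is cohomologically affine. Since $\QCoh(Y)$ is geometric, hence pre-geometric, Proposition~\ref{prop:cohomologically_affine_implies_affine} applies and tells us that $f^{\ast}$ is an affine functor: setting $A := f_{\ast}(I)$ where $I$ denotes the unit of $\QCoh(X)$, Remark~\ref{rmk:algebra_from_affine_functor} identifies $A$ as a commutative algebra in $\QCoh(Y)$ and supplies a tensor equivalence $\QCoh(X) \simeq \QCoh(Y)_{A}$ under which $f^{\ast}$ corresponds to the free module functor $A \otimes -$.

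To conclude, I verify the defining property of an affine morphism, namely that $X \pb{Y} T$ is an affine scheme for every affine scheme $T$ equipped with a morphism $g \colon T \to Y$. Since $Y$ is an Adams stack its diagonal is affine, and consequently every morphism from an affine scheme to $Y$ is automatically affine; so $g^{\ast}$ is an affine functor by Proposition~\ref{prop:affine_morphism_implies_affine_functor}. Applying Corollary~\ref{cor:pushout_along_affine_functor} and Proposition~\ref{prop:pushout_along_affine} to the pullback of $f$ along $g$ then produces
\[
\QCoh(X \pb{Y} T) \simeq \QCoh(T)_{g^{\ast} A}.
\]
Because $T$ is affine, $\QCoh(T)$ is a module category over a commutative $R$-algebra, and hence so is $\QCoh(T)_{g^{\ast} A}$. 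Theorem~\ref{thm:geometric_equals_qc} then identifies $X \pb{Y} T$ with an affine Adams stack, so $f$ is an affine morphism.

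The main obstacle is largely one of assembly: the substantive work has been carried out by Proposition~\ref{prop:cohomologically_affine_implies_affine} (passing from ``cohomologically affine'' to ``affine functor'') and Corollary~\ref{cor:pushout_along_affine_functor} (computing pullbacks along affine morphisms as pushouts of tensor categories). The only delicate point is to ensure that the pullback computation applies for every affine $T \to Y$, which relies on the fact that morphisms from affine schemes to $Y$ are automatically affine, itself a consequence of the affineness of the diagonal of an Adams stack.
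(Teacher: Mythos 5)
Your proof is correct and follows essentially the same route as the paper: the forward direction via Proposition~\ref{prop:affine_morphism_implies_affine_functor}, and the converse by upgrading $f^{\ast}$ to an affine functor via Proposition~\ref{prop:cohomologically_affine_implies_affine} and then computing $\QCoh(X \pb{Y} T)$ as $\QCoh(T)_{g^{\ast}A}$ using Proposition~\ref{prop:pushout_along_affine}. The only cosmetic difference is that the paper identifies $\QCoh(X \pb{Y} T)$ with the pushout by citing Theorem~\ref{thm:pushouts} directly, whereas you route through Corollary~\ref{cor:pushout_along_affine_functor} and the affineness of $g$ (which one could also get straight from Proposition~\ref{prop:affine_target_implies_affine}); both are valid.
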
 

\begin{proof}
 One direction follows from Proposition~\ref{prop:affine_morphism_implies_affine_functor}. To see the converse direction, let $f \colon X \rightarrow Y$ be a morphism of Adams stacks such that $f^{\ast}$ is cohomologically affine. From Proposition~\ref{prop:cohomologically_affine_implies_affine} we know that $f^{\ast}$ is an affine functor, that is, it is equivalent to $A\otimes -$ for some commutative algebra $A \in \QCoh(Y)$.

 Let $g \colon Z \rightarrow Y$ be a morphism of Adams stack with affine domain $Z=\Spec(B)$. From Proposition~\ref{prop:pushout_along_affine}, applied to the functor $F=g^{\ast}$, it follows that the bicategorical pushout of $f^{\ast}$ along $g^{\ast}$ is given by $(\Mod_B)_{g^{\ast}A}$, that is, by the category of modules of a commutative $B$-algebra. Since $\QCoh(X \pb{Y} Z)$ is equivalent to this category (see Theorem~\ref{thm:pushouts}), this shows that $X \pb{Y} Z$ is affine. 
\end{proof}

\subsection{Bicategorical pushouts along affine functors}\label{section:pushout_along_affine}
 To prove Proposition~\ref{prop:pushout_along_affine} we have to check that a certain diagram in a 2-category is a bicategorical pushout diagram. In particular, we have to check that the isomorphisms we write down are natural (in the 2-categorical sense). This can either be done ``by hand,'' by checking that all the necessary diagrams commute. We have instead opted to use some more formal techniques that minimize this kind of computation. One of these uses the notion of isofibrations in the category of small categories. A functor $F \colon \ca{C} \rightarrow \ca{C}^{\prime}$ is called an \emph{isofibration} if any isomorphism $A^{\prime} \cong FA$ already lies in the image of $F$. For example, the forgetful functor from groups to sets is an isofibration since the group structure can be transferred along any bijection of sets.
 
 The standard fact about isofibrations that we will use is that the strict pullback along an isofibration is equivalent to the bicategorical pullback. This implies in particular that, given a diagram of functors and equivalences
 \[
 \xymatrix{ \ca{A} \ar[r]^-{F} \ar[d]^{\simeq} & \ca{B} \ar[d]^{\simeq} & \ca{C} \ar[d]^{\simeq} \ar[l]_{G} \\ 
  \ca{A}^{\prime} \ar[r]_{F^{\prime}}  & \ca{B}^{\prime} & \ca{C}^{\prime} \ar[l]^{G}}
 \]
 where both $F$ and $F^{\prime}$ are isofibrations, the induced functor between strict pullbacks $\ca{A} \pb{\ca{B}} \ca{C} \rightarrow \ca{A}^{\prime} \pb{\ca{B}^{\prime}} \ca{C}^{\prime}$ is again an equivalence.

 The key ingredient in proving Proposition~\ref{prop:pushout_along_affine} is the following lemma, which appears for example as \cite[Proposition~4.2]{BRANDENBURG_TENSOR}. Let $\ca{C}$ be a tensor category. The lemma states that the following two 2-functors
\[
 P,Q \colon \ca{T} \rightarrow \Cat
\]
 are equivalent.
 
\begin{dfn}\label{def:pseudofunctors_for_pushout}
 Fix a tensor category $\ca{C}$, and let $A \in \ca{C}$ be a commutative algebra in $\ca{C}$. The 2-functor $P_{\ca{C},A}$ sends a tensor category $\ca{E}$ to the category with objects the triples $(F,\overline{F},\sigma)$ where $\sigma$ is a natural isomorphism
 \[
 \xymatrix{& \ca{C}_A \ar[rd]^{\overline{F}} \\ \ca{C} \ar[ru]^{A \otimes -} \ar[rr]_-{F}  \xtwocell[0,2]{}\omit{^<-3>\sigma} && \ca{E}}
 \]
 of tensor functors, and morphisms $(F,\overline{F},\sigma) \rightarrow (G,\overline{G},\tau)$ the pairs of natural transformations $\alpha \colon F \Rightarrow G$ and $\overline{\alpha} \colon \overline{F} \Rightarrow \overline{G}$ which are compatible with $\sigma$ and $\tau$. The 2-functor structure on $P$ is given by composing with tensor functors $\ca{E} \rightarrow \ca{E}^{\prime}$.
 
 The 2-functor $Q_{\ca{C},A} \colon \ca{T} \rightarrow \Cat$ sends a tensor category $\ca{E}$ to the category with objects the pairs $(F, f)$ consisting of a tensor functor $F \colon \ca{C} \rightarrow \ca{E}$ and a morphism of commutative algebras $f \colon FA \rightarrow I$, and morphisms $(F,f) \rightarrow (G,g)$ the natural transformations $\alpha \colon F \Rightarrow G$ of tensor functors for which the diagram
 \[
 \xymatrix{FA \ar[rd]_{f} \ar[rr]^{\alpha_A} && GA \ar[ld]^{g}\\ & I}
 \]
 is commutative. A tensor functor $H \colon \ca{E} \rightarrow \ca{E}^{\prime}$ sends $(F,f)$ to $\bigr(HF,(\varphi_0^H)^{-1} Hf\bigl)$.
 \end{dfn}

 Note that for any commutative algebra $A$, the multiplication $\mu \colon A \otimes A \rightarrow A$ is a morphism of $A$-modules.

\begin{lemma}\label{lemma:module_category_universal_property}
 Let $\ca{C}$ be a tensor category, and let $A \in \ca{C}$ be a commutative algebra. The assignment which sends an object $(F,\overline{F},\sigma)$ of $P_{\ca{C},A}(\ca{E})$ to the pair $(F,f)$ where $f$ denotes the composite
 \[
 \xymatrix{FA \ar[r]^-{\sigma_A} & \overline{F}(A\otimes A) \ar[r]^-{\overline{F}(\mu)} & \overline{F}(A) \ar[r]^-{(\varphi_0^{\overline{F}})^{-1}} & I}
 \]
 gives a 2-natural equivalence between $P_{\ca{C},A}$ and $Q_{\ca{C},A}$.
\end{lemma}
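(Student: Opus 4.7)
The plan is to verify, for each tensor category $\ca{E}$, that the stated assignment defines an equivalence of categories $P_{\ca{C},A}(\ca{E}) \to Q_{\ca{C},A}(\ca{E})$, and then to observe 2-naturality in $\ca{E}$. First I would check that the composite $f = (\varphi_0^{\overline{F}})^{-1} \circ \overline{F}(\mu) \circ \sigma_A$ is a morphism of commutative algebras in $\ca{E}$. The essential input is that $\mu \colon A \otimes A \to A$ is the counit of the free-forgetful adjunction $(A \otimes -) \dashv U \colon \ca{C} \rightleftarrows \ca{C}_A$, hence a morphism of $A$-modules from the free $A$-module on $A$ to the unit $A$ of $\ca{C}_A$. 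Compatibility of $f$ with the unit and the multiplication of $FA$ then follows from the strong monoidality of $\overline{F}$, the monoidal naturality of $\sigma$, and the algebra axioms for $A$ in $\ca{C}$.

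To establish essential surjectivity, I would construct an explicit quasi-inverse. Given $(F,f) \in Q_{\ca{C},A}(\ca{E})$, the map $f$ endows $I$ with the structure of an $FA$-algebra, and for an $A$-module $(M,\alpha)$ I would define $\overline{F}(M)$ as the reflexive coequalizer in $\ca{E}$ of
\[
FA \otimes FM \rightrightarrows FM,
\]
one arrow being $F(\alpha) \circ (\varphi^F_{A,M})^{-1}$ and the other being $f \otimes \id_{FM}$ followed by the left unit isomorphism; a common section is given by $\eta_{FA} \otimes \id_{FM}$ composed with the inverse left unit. The symmetric strong monoidal structure on $\overline{F}$ arises from the canonical isomorphism $\overline{F}(M) \otimes \overline{F}(N) \cong \overline{F}(M \ten{A} N)$ obtained from the fact that tensor products in $\ca{E}$ preserve reflexive coequalizers. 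Cocontinuity of $\overline{F}$ follows since every colimit in $\ca{C}_A$ can be built from bar-type reflexive presentations together with colimits in $\ca{C}$; and the required isomorphism $\sigma \colon F \Rightarrow \overline{F} \circ (A \otimes -)$ arises from the observation that, when $M = A \otimes X$ is free, the defining coequalizer is canonically identified with $FX$.

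To close the argument, I would verify that the two constructions are mutually quasi-inverse. Starting from $(F, \overline{F}, \sigma)$, applying the coequalizer construction to the associated $(F,f)$ returns $\overline{F}$ up to canonical isomorphism, because every $A$-module admits a canonical reflexive presentation $A \otimes A \otimes M \rightrightarrows A \otimes M \twoheadrightarrow M$ as a coequalizer of free modules, and $\overline{F}$, being cocontinuous, preserves this presentation. Conversely, starting from $(F,f)$ and extracting the algebra map from the coequalizer reconstruction recovers $f$ by a direct computation with the case $M = A$. On morphisms, a natural transformation $\alpha \colon F \Rightarrow G$ satisfying the $Q$-compatibility condition uniquely determines $\overline{\alpha}$ via the universal property of the defining coequalizers, and this $\overline{\alpha}$ is automatically compatible with $\sigma$ and $\tau$. 2-naturality in $\ca{E}$ is immediate because tensor functors preserve both the defining composite and the reflexive coequalizers used in the inverse construction. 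The main obstacle is tracking the monoidal coherence of $\overline{F}$ throughout the inverse, but this is a standard monoidal-descent bookkeeping argument once the pointwise isomorphisms are in place.
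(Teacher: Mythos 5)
Your proposal is correct and takes essentially the same route as the paper: the quasi-inverse is the explicit coequalizer construction $\overline{F}M = I \ten{FA} FM$ (the coequalizer of $f\otimes FM$ and the action map $FA \otimes FM \rightarrow FM$), with $\sigma$ obtained by identifying this coequalizer with $FX$ on free modules $A\otimes X$; your use of reflexivity to organize the monoidal coherence and the mutual-inverse checks just fills in what the paper calls a ``lengthy but straightforward computation.'' One small slip: the first leg of your coequalizer should be $F(\alpha)\circ\varphi^F_{A,M}$ rather than $F(\alpha)\circ(\varphi^F_{A,M})^{-1}$, since $\varphi^F_{A,M}$ already goes from $FA\otimes FM$ to $F(A\otimes M)$.
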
 
 
\begin{proof}
 It is straightforward to check that the assignment is 2-natural. To see that it is an equivalence, we give an explicit construction of the inverse. Starting with a tensor functor $F \colon \ca{C} \rightarrow \ca{E}$ and a morphism $f \colon FA \rightarrow I$ of commutative algebras, we construct a lift $\overline{F} \colon \ca{C}_A \rightarrow \ca{E}$ as follows. Recall that for every $A$-module $M$ with action $\rho \colon A\otimes M \rightarrow M$, the diagram
\[
 \xymatrix{ A\otimes A \otimes M \ar@<0.5ex>[r]^-{\mu \otimes M} \ar@<-0.5ex>[r]_-{A\otimes \rho} & A \otimes M \ar[r]^-{\rho} & M } 
\]
is a coequalizer diagram in $\ca{C}_A$. Therefore we have no choice but to define $\overline{F}M$ to be the coequalizer of the diagram
\[
\xymatrix{FA \otimes FM \ar[rr]^-{f \otimes FM} \ar[rd]_{\varphi^F_{A,M}} && FM \ar[r]^-{\xi_M} & \overline{F}M \\ & F(A\otimes M) \ar[ru]_{F\rho}}
\]
 in $\ca{E}$. The mate of $\xi$ under the adjunction $A\otimes - \dashv U \colon \ca{C} \rightleftarrows \ca{C}_A$, that is, the composite
 \[
  \xymatrix{FC \ar[r]^-{F\eta_C} & FU(A\otimes C) \ar[r]^-{\xi_{ A \otimes C }} & \overline{F}(A\otimes C)} \smash{\rlap{,}}
 \]
 defines a natural transformation $\sigma \colon F \Rightarrow \overline{F}(A \otimes -)$. As one of the referees pointed out, this construction can be described more conceptually as the tensor functor which sends an $A$-module $M$ to $I \ten{F(A)} F(M)$, where $F(M)$ is endowed with the natural $F(A)$-module structure and $I$ is endowed with an $F(A)$-algebra structure via $f$. 

 A lengthy but straightforward computation shows that $\sigma$ is an isomorphism, that $\overline{F}$ is a tensor functor, and that this construction is indeed inverse to the assignment described above.
\end{proof}

\begin{proof}[Proof of Proposition~\ref{prop:pushout_along_affine}.]
 The claim follows formally from applying Lemma~\ref{lemma:module_category_universal_property} twice. First note that the 2-natural transformation
\[
e_1 \colon \ca{T}(\ca{D}_{FA},-) \Rightarrow P_{\ca{D},FA}(-)
\]
 which sends $H \colon \ca{D}_{FA} \rightarrow \ca{E}$ to the strictly commuting diagram
\[
 \xymatrix{& \ca{D}_{FA} \ar[rd]^{H} \\ \ca{D} \ar[ru]^{FA \otimes -} \ar[rr]_{H(FA \otimes -)} \xtwocell[0,2]{}\omit{^<-3>\id} && \ca{E} } 
\]
 is an equivalence. From Lemma~\ref{lemma:module_category_universal_property}, we get an equivalence
\[
 e_2 \colon P_{\ca{D},FA}(-) \Rightarrow Q_{\ca{D},FA}(-) \smash{\rlap{.}}
\]
 To finish the proof, we use the fact that both $P_{\ca{C},A}(\ca{E})$ and $Q_{\ca{C},A}(\ca{E})$ have a natural forgetful functor to $\ca{T}(\ca{C},\ca{E})$ (which sends and object $(G,\overline{G},\sigma)$ of $P_{\ca{C},A}(\ca{E})$ to $G$, and an object $(G,\varphi)$ of $Q_{\ca{C},A}(\ca{E})$ to $G$). Note that the diagram
\[
 \xymatrix{Q_{\ca{D},FA}(\ca{E}) \ar[r] \ar[d] & Q_{\ca{C},A}(\ca{E}) \ar[d] \\ \ca{T}(\ca{D},\ca{E}) \ar[r]_{\ca{T}(F,\ca{E})} & \ca{T}(\ca{C},\ca{E})}
\]
 is a strict pullback diagram. On the other hand, the strict pullback of the diagram
\[
 \xymatrix{ & P_{\ca{C},A}(\ca{E}) \ar[d] \\ \ca{T}(\ca{D},\ca{E}) \ar[r]_{\ca{T}(F,\ca{E})} & \ca{T}(\ca{C},\ca{E})} 
\]
 represents the pushout of $F \colon \ca{C} \rightarrow \ca{D}$ along $A \otimes - \colon \ca{C} \rightarrow \ca{C}_A$, so to conclude the proof we only need to show that these two pullbacks are equivalent. 

 The equivalence of Lemma~\ref{lemma:module_category_universal_property} is compatible with the forgetful functors, so it induces a comparison morphism between the two strict pullbacks. Since both the forgetful functors are isofibrations, this comparison morphism must also be an equivalence.
\end{proof}

\subsection{Closure of geometric categories under affine functors}\label{section:closure_under_affine_functors}

 Fix a geometric category $\ca{C}$, and let $F \colon \ca{C} \rightarrow \ca{D}$ be a cohomologically affine functor, with right adjoint denoted by $U$. Note that this functor is in fact affine by Proposition~\ref{prop:cohomologically_affine_implies_affine}. Thus the image of $F$ --- the subcategory of free modules --- is a generator of $\ca{D}$. It follows that the image of a generating set of duals in $\ca{C}$ forms a generating set of duals in $\ca{D}$. This shows that $\ca{D}$ is pre-geometric.

 To show that $\ca{D}$ is geometric is therefore equivalent to showing that $\ca{D}$ contains a faithfully flat affine algebra (see Corollary~\ref{cor:faithfully_flat_coverings_vs_algebras}). Let $A \in \ca{C}$ be a faithfully flat affine algebra. We claim that $FA$ is a faithfully flat affine algebra in $\ca{D}$.

\begin{lemma}\label{lemma:faithfully_flat_stable}
 Let $\ca{C}$ be a geometric category, and let $A \in \ca{C}$ be a faithfully flat affine algebra. Let $F \colon \ca{C} \rightarrow \ca{D}$ be a tensor functor with pre-geometric codomain. Then $FA$ is faithfully flat.
\end{lemma}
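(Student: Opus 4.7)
The plan is to verify the two conditions of Lemma~\ref{lemma:faithfully_flat_condition} for the algebra $FA$ in $\ca{D}$: that the unit $\eta' \colon I \to FA$ is a monomorphism with flat cokernel. Since $F$ is strong monoidal and cocontinuous, we have $\eta' = F\eta$ and $\mathrm{coker}(\eta') \cong F(Q)$, where $0 \to I \to A \to Q \to 0$ is the flat presentation of $A$ supplied by Lemma~\ref{lemma:faithfully_flat_condition} applied in $\ca{C}$.

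The flatness of $F(A)$ and $F(Q)$ in $\ca{D}$ follows from the filtered-colimit description of flat objects in pre-geometric categories (the content of Lemma~\ref{lemma:flatness_from_filtration} below), which presents any flat object as a filtered colimit of objects with duals. Since $F$ is cocontinuous and preserves duals (by strong monoidality), the images $F(A)$ and $F(Q)$ are filtered colimits of dualizable objects in $\ca{D}$. Any object $V$ with a dual has $V \otimes -$ exact, since $V^{\vee\vee} \cong V$ makes $V \otimes -$ simultaneously left and right adjoint to $V^\vee \otimes -$; and flatness is stable under filtered colimits in the pre-geometric category $\ca{D}$, so both $F(A)$ and $F(Q)$ are flat.

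For the monomorphism of $F\eta$, I would exploit the algebra structure on $A$: the left unit law $\mu \circ (\eta \otimes A) = \id_A$ shows that $\eta \otimes A$ is a split monomorphism in $\ca{C}$ with retraction $\mu$, so $F\eta \otimes FA = F(\eta \otimes A)$ is a split monomorphism in $\ca{D}$ with retraction $F\mu$. Letting $K = \ker(F\eta)$ with inclusion $\iota \colon K \to I$, the vanishing of $F\eta \circ \iota$ implies that the composite $K \otimes FA \to FA \to FA \otimes FA$ (via $\iota \otimes FA$ followed by $F\eta \otimes FA$) is zero. Since $F\eta \otimes FA$ is monic, $\iota \otimes FA$ itself must be zero; combined with the flatness of $FA$, which makes $\iota \otimes FA$ a monomorphism, we conclude $K \otimes FA = 0$.

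The hard part will be extracting $K = 0$ from $K \otimes FA = 0$, which is where the \emph{faithful} component of faithful flatness must enter. I would close this using that $K$ is a subobject of $I$ together with pre-geometricity of $\ca{D}$: a nonzero subobject of $I$ admits an epi from an object with a dual $V$, and then the filtered-colimit presentation of $FA$ by dualizable objects (together with the fact that $A$ was already faithful in $\ca{C}$) produces a detector contradicting $K \otimes FA = 0$. This step is the main obstacle, since it is the only place where the faithful (as opposed to merely flat) character of $A$ is not automatic from the formal properties of $F$.
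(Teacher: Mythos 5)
Your reduction to Lemma~\ref{lemma:faithfully_flat_condition} is the same as the paper's, and the flatness half of your argument is essentially sound in spirit, although you misattribute the key input: Lemma~\ref{lemma:flatness_from_filtration} says nothing about presenting flat objects as filtered colimits of dualizable objects. What the paper actually uses (citing the proof of an external theorem) is the more specific statement that the whole sequence $0 \rightarrow I \rightarrow A \rightarrow A/I \rightarrow 0$ is a filtered colimit of short exact sequences $0 \rightarrow I \rightarrow A_i \rightarrow A_i/I \rightarrow 0$ of objects with duals; a general Lazard-type theorem for pre-geometric categories is not established anywhere in this paper, so you cannot lean on it as if it were.

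The real problem is the monomorphism half, and you have correctly identified it but not solved it. Your reduction lands on the implication $K \otimes FA = 0 \Rightarrow K = 0$, which \emph{is} the faithfulness of $FA$ --- precisely one of the things being proved --- so the argument as it stands is circular. Your sketch for closing it does not work either: the faithfulness of $A$ lives in $\ca{C}$, the kernel $K$ lives in $\ca{D}$, and the only functor available, $F$, points the wrong way, so there is no ``detector'' to transport the contradiction; moreover, the vanishing of a filtered colimit $\colim_i (K \otimes FA_i) = 0$ does not force any individual term to vanish. The idea you are missing is that the monomorphism should be established \emph{directly}, not via faithfulness: since $\ca{C}$ is geometric (not merely pre-geometric), each short exact sequence $0 \rightarrow I \rightarrow A_i \rightarrow A_i/I \rightarrow 0$ of dualizable objects becomes split after applying a faithfully flat covering $\ca{C} \rightarrow \Mod_B$, whence its dual sequence is also exact; a short exact sequence of dualizable objects with exact dual is preserved by \emph{every} tensor functor. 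Applying $F$ and passing to the filtered colimit (exact in the locally finitely presentable abelian category $\ca{D}$) exhibits $F\eta$ as a filtered colimit of monomorphisms, with flat cokernel, and Lemma~\ref{lemma:faithfully_flat_condition} finishes the proof with no circularity.
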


\begin{proof}
 We check that the conditions of Lemma~\ref{lemma:faithfully_flat_condition} are satisfied, that is, that the sequence
\[
 \xymatrix{0 \ar[r] & I \ar[r]^-{\eta } & FA \ar[r] & FA\slash I \ar[r] & 0}
\]
 is exact and that $FA \slash I$ is flat.

 From the proof of \cite[Theorem~1.3.2]{SCHAEPPI_STACKS}, we know that the sequence
\[
 \xymatrix{0 \ar[r] & I \ar[r]^-{\eta } & A \ar[r] & A\slash I \ar[r] & 0} 
\]
 in $\ca{C}$ is a filtered colimit of sequences
\begin{equation}\label{eqn:exact_sequence_of_duals}
 \xymatrix{0 \ar[r] & I \ar[r]^-{\eta } & A_i \ar[r] & A_i\slash I \ar[r] & 0}   
\end{equation}
 of objects with duals. Since $\ca{C}$ is geometric, it follows that the dual sequence of~\eqref{eqn:exact_sequence_of_duals} is also exact. As shown\footnote{This was only shown in the special case $\ca{D}=\Mod_B$, but the proof works at the desired level of generality.} in the proof of \cite[Theorem~1.3.2]{SCHAEPPI_STACKS}, this implies that $F$ preserves sequences of the form~\eqref{eqn:exact_sequence_of_duals}.

 The claim now follows from the two facts that $F$ preserves filtered colimits and that filtered colimits in $\ca{D}$ are exact.
\end{proof}

\begin{proof}[Proof of Proposition~\ref{prop:closure_under_affine_functors}.]
 With the notation introduced at the beginning of this section, we have already shown that $\ca{D}$ is pre-geometric and that $FA$ is faithfully flat (see Lemma~\ref{lemma:faithfully_flat_stable}). Thus it only remains to show that $FA$ is an affine algebra. Indeed, the existence of a faithfully flat affine algebra in $\ca{D}$ implies the existence of a faithfully flat covering, hence that $\ca{D}$ is geometric (see Corollary~\ref{cor:faithfully_flat_coverings_vs_algebras}).

 Recall that $FA$ is affine if and only if the functor
\[
\ca{D}(I,U_{FA}-) \colon \ca{D} \rightarrow \Mod_R 
\]
 is faithful and exact, where $U_{FA} \colon \ca{D}_{FA} \rightarrow \ca{D}$ denotes the forgetful functor (see Lemma~\ref{lemma:affine_algebra_characterization}). 

 Let $U_A \colon \ca{C}_A \rightarrow \ca{C}$ denote the corresponding forgetful functor for $\ca{C}$, and let $\overline{U} \colon \ca{D}_{FA} \rightarrow \ca{C}_A$ be the lift of the right adjoint $U$ of $F$. By construction, the diagram
\[
 \xymatrix{\ca{D}_{FA} \ar[r]^-{\overline{U}} \ar[d]_{U_{FA}} & \ca{C}_A \ar[d]^{U_A} \\ \ca{D} \ar[r]_-{U} & \ca{C} }
\]
 is commutative. Thus we get the sequence of natural isomorphisms
\begin{align*}
  \ca{D}(I,U_{FA}-) &\cong \ca{D}(FI,U_{FA}-) \\
& \cong \ca{C}(I,UU_{FA}-)\\
 &= \ca{C}(I,U_A \overline{U}-) \\
&= \ca{C}(I,U_A-) \circ \overline{U} \smash{\rlap{.}}
\end{align*}
 Since $A$ is an affine algebra, we have reduced the problem to showing that $\overline{U}$ is faithful and exact (see Lemma~\ref{lemma:affine_algebra_characterization}). But this is immediate from the faithfulness and exactness of $U$, which in turn follows from the fact that $F$ is cohomologically affine (see Definition~\ref{def:cohomologically_affine}).
\end{proof}

\subsection{Affine morphisms induce affine functors}\label{section:affine_morphism_implies_affine_functor}
 General pullbacks can be computed as pullbacks along the diagonal, and the diagonal of an Adams stack is an affine morphism. In our proof of Theorem~\ref{thm:pushouts}, we use the fact that the induced functor
\[
f^{\ast } \colon  \QCoh(Y) \rightarrow \QCoh(X)
\]
 of an affine morphism $f \colon X \rightarrow Y$ is an affine functor.

 This is well-known in the case of Artin stacks (see \cite[Proposition~3.9]{ALPER}). Since Adams stacks are more general --- while they are assumed to be quasi-compact, they might only have a \emph{flat} cover by a scheme, not subject to any finiteness conditions --- we provide a detailed proof below. To follow the proof of \cite[Proposition~3.9]{ALPER}, we need to show that $f^{\ast}$ satisfies the \emph{flat base change formula}, or \emph{Beck-Chevalley condition} (see Diagram~\ref{eqn:diagram_of_lifts} below), and then use faithfully flat descent (or, in categorical language, comonadicity of faithfully flat coverings of a tensor category) to reduce to the affine case.

\begin{lemma}\label{lemma:composition_coclosed}
 Let $F \colon \ca{C} \rightarrow \ca{D}$ and $ G \colon \ca{D} \rightarrow \ca{E}$ be tensor functors, with right adjoints $U$ and $V$ respectively. Then for all objects $C \in \ca{C}$ and $E \in \ca{E}$ the diagram
\[
\xymatrix{C \otimes UV(E) \ar[rd]_{\chi^{GF}_{C,E}} \ar[rr]^{\chi^F_{C,VE}} && U(FC \otimes VE) \ar[ld]^{U(\chi^{G}_{FC,E})} \\ & UV\bigl( GF(C) \otimes E \bigr) }
\]
 is commutative, where $\chi$ is the natural transformation defined in Diagram~\eqref{eqn:projection_formula}.
\end{lemma}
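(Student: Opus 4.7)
The plan is to reformulate the triangle by taking adjoint transposes under the composite adjunction $GF \dashv UV$, after which commutativity reduces to a short calculation with the standard identities for composite monoidal adjunctions. Observe that $\chi^F_{X,Y}$ is, by its very definition, the $F \dashv U$ transpose of
\[
\psi^F_{X,Y} := (FX \otimes \varepsilon^F_Y) \circ (\varphi^F_{X,UY})^{-1} \colon F(X \otimes UY) \to FX \otimes Y,
\]
and I would introduce the analogous $\psi^G$ and $\psi^{GF}$ for the other two tensor functors. Since hom-adjointness is a bijection, to prove the triangle commutes it suffices to show that the $GF \dashv UV$ transposes of its two sides coincide.

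By construction, the transpose of $\chi^{GF}_{C,E}$ is exactly $\psi^{GF}_{C,E}$. For the other side, I would compute the transpose in two stages, using the standard fact that the transpose under $GF \dashv UV$ is obtained by first taking the $F \dashv U$ transpose and then the $G \dashv V$ transpose. The $F \dashv U$ transpose of $U(\chi^G_{FC,E}) \circ \chi^F_{C,VE}$ simplifies to $\chi^G_{FC,E} \circ \psi^F_{C,VE}$ by one application of the naturality of $\varepsilon^F$, and a second application of the same principle produces the $G \dashv V$ transpose $\psi^G_{FC,E} \circ G(\psi^F_{C,VE})$.

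The problem therefore reduces to checking the identity
\[
\psi^{GF}_{C,E} = \psi^G_{FC,E} \circ G(\psi^F_{C,VE})
\]
in $\ca{E}$. Expanding both sides using the definition of $\psi$, this follows from the standard compatibilities $\varepsilon^{GF}_E = \varepsilon^G_E \circ G(\varepsilon^F_{VE})$ and $\varphi^{GF}_{X,Y} = G(\varphi^F_{X,Y}) \circ \varphi^G_{FX,FY}$ for the composite monoidal adjunction, together with one invocation of naturality of $(\varphi^G)^{-1}$ at the morphism $FC \otimes \varepsilon^F_{VE}$. The only mild obstacle is the bookkeeping: keeping track of which adjunction each transpose refers to, and matching the two-step transposing of the composite with the one-step transposing for $GF \dashv UV$. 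Once this adjoint reformulation is in place, no real difficulty remains.
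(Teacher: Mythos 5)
Your argument is correct, and every step checks out: $\chi^F_{X,Y}$ is indeed the $F\dashv U$ transpose of $\psi^F_{X,Y}=(FX\otimes\varepsilon^F_Y)\circ(\varphi^F_{X,UY})^{-1}$, the two-stage transposition agrees with the one-stage transposition for the composite adjunction $GF\dashv UV$, and the reduced identity $\psi^{GF}_{C,E}=\psi^G_{FC,E}\circ G(\psi^F_{C,VE})$ follows exactly as you say from $\varepsilon^{GF}_E=\varepsilon^G_E\circ G(\varepsilon^F_{VE})$, $\varphi^{GF}=G(\varphi^F)\circ\varphi^G$, and naturality of $(\varphi^G)^{-1}$ at $FC\otimes\varepsilon^F_{VE}$. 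The paper disposes of this lemma in one line, as a ``straightforward diagram chase'' that appeals to the fact that the unit and counit of a strong monoidal left adjoint are monoidal natural transformations; so it chases the triangle as drawn, on the right-adjoint side. Your route is a genuinely different organization of the same elementary verification: by passing to adjoint transposes (mates) you move the whole computation to the left-adjoint side, where the statement becomes the multiplicativity of the projection morphism $\psi$ under composition of tensor functors, and the only inputs are naturality of the monoidal constraints and the standard formulas for composite adjunctions --- monoidality of the unit and counit is never invoked. This buys a cleaner, more modular argument (the identity for $\psi$ is reusable and is the conceptually primary statement), at the cost of the transposition bookkeeping you flag; the paper's direct chase avoids that bookkeeping but leaves all details implicit. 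Either way the lemma stands.
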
 

\begin{proof}
 This follows from a straightforward diagram chase, using the fact that the unit and counit of a strong monoidal left adjoint functor are monoidal natural transformations.
\end{proof}

\begin{proof}[Proof of Proposition~\ref{prop:affine_morphism_implies_affine_functor}.]
 Let $f \colon X \rightarrow Y$ be an affine morphism. We want to show that $f^{\ast}$ is an affine functor. Since $\QCoh(Y)$ is geometric, any cohomologically affine functor is affine (see Proposition~\ref{prop:cohomologically_affine_implies_affine}). In other words, it suffices to show that the right adjoint of $f^{\ast}$ is faithful and exact.
 
 Pick a faithfully flat affine algebra $A \in \QCoh(Y)$, with corresponding faithfully flat covering $Y_0 \rightarrow Y$. By assumption, the pullback of $Y_0$ along $f \colon X \rightarrow Y$ is affine. From Corollary~\ref{cor:pushout_along_affine_functor} and Proposition~\ref{prop:pushout_along_affine} we know that $\QCoh(X \pb{Y} Y_0)$ is equivalent to $\bigl(\QCoh(X)\bigr)_{f^{\ast} A}$. In other words, we have shown that $f^{\ast} A$ is an affine algebra. It is also faithfully flat by Lemma~\ref{lemma:faithfully_flat_stable}.

 The composite
\[
 \xymatrix@!C=55pt{\QCoh(Y) \ar[r]^-{f^{\ast}} & \QCoh(X) \ar[r]^{f^{\ast}A \otimes -} & \Mod_B }
\]
 is a tensor functor with (pre-)geometric domain and affine codomain, so it is affine by Proposition~\ref{prop:affine_target_implies_affine} and hence coclosed by Proposition~\ref{prop:cohomologically_affine_implies_affine}. The same reasoning applies to the functor $f^{\ast} A \otimes -$. Applying Lemma~\ref{lemma:composition_coclosed} to this composite, we find that the morphism
\begin{equation}\label{eqn:chi_is_iso}
 \chi^{f^{\ast}}_{A,VM} \colon A \otimes f_{\ast} VM \rightarrow f_{\ast}(f^{\ast} A \otimes VM) 
\end{equation}
 is invertible for all $M \in \Mod_B$, where $V$ denotes the right adjoint of $f^{\ast} A \otimes -$. Since $f^{\ast}A$ is faithfully flat, the adjunction $f^{\ast} A \otimes - \dashv V$ is comonadic. It follows that every object of $\QCoh(X)$ can be written as equalizer of objects in the image of $V$. Since both the domain and codomain of the morphism~\ref{eqn:chi_is_iso} preserve equalizers, it follows that
\[
 \chi^{f^{\ast}}_{A,N} \colon A \otimes f_{\ast} N \rightarrow  f_{\ast} (f^{\ast}A \otimes N)
\]
 is an isomorphism for all $N \in \QCoh(X)$. This implies that the diagram
\begin{equation}\label{eqn:diagram_of_lifts}
\vcenter{ \xymatrix{ \QCoh(Y)_A  & \QCoh(X)_{f^{\ast} A} \ar[l]_-{\overline{f_{\ast}}} \\ \QCoh(X) \ar[u]^{A\otimes -}  & \QCoh(Y) \ar[l]^{f_{\ast}} \ar[u]_{f^{\ast} A \otimes -}  
}}
\end{equation}
 commutes up to isomorphism. (This is the Beck-Chevalley condition alluded to at the beginning of this section.)

 The functor $\overline{f_{\ast}}$ is right adjoint to the tensor functor $\overline{f^{\ast}}$, which is a tensor functor between two categories of modules over commutative $R$-algebras. Such tensor functors are induced by morphisms of algebras (this is  not hard to see directly, but it also follows from Lemma~\ref{lemma:module_category_universal_property}). Thus the right adjoint is given by the corresponding forgetful functor, so it is in particular faithful and exact.

 To conclude the proof, note that the above reasoning and the natural isomorphism in Diagram~\ref{eqn:diagram_of_lifts} imply that $A \otimes f_{\ast}(-)$ is faithful and cocontinuous. Since $A$ is faithfully flat, it follows that $f_\ast$ is faithful and cocontinuous. This shows that the functor $f^{\ast}$ is indeed cohomologically affine, and therefore affine (see Proposition~\ref{prop:cohomologically_affine_implies_affine}).
\end{proof}

\section{Geometric categories in characteristic zero}\label{section:description}

\subsection{Overview}
 Throughout this section we assume that $R$ is a $\mathbb{Q}$-algebra. In fact, since the degenerate case $R=0$ is easy to deal with, we can assume that $\mathbb{Q} \subseteq R$. The goal of this section is to provide an intrinsic characterization of geometric categories in characteristic zero. To do this, we need to construct a faithfully flat covering, which by Corollary~\ref{cor:faithfully_flat_coverings_vs_algebras} amounts to the construction of a faithfully flat affine algebra $A \in \ca{C}$. We will follow Deligne's proof of the analogous theorem for Tannakian categories (\cite[Th\'eor\`eme~7.1]{DELIGNE}).
 
 By Lemma~\ref{lemma:affine_algebra_characterization}, we need to construct a faithfully flat commutative algebra such that $A$ is projective as an $A$-module, and such that $A$ is a generator of the category of $A$-modules. In fact, we can construct two separate faithfully flat algebras with the respective properties, and then take $A$ to be their tensor product. It turns out that the two constructions that Deligne used in \cite[\S 7]{DELIGNE} work in the context of general pre-geometric categories, though the proofs (especially of faithful flatness) have to be modified. Deligne does not rely on the notion of an affine algebra in $\ca{C}$, so there is also some work involved in checking that the resulting algebra is affine.
 
 The basic idea behind the construction is rather simple: if $\ca{C}$ is geometric, then an epimorphism in $\ca{C}$ whose codomain has a dual is ``locally split,'' that is, its image under a faithfully flat covering $\ca{C} \rightarrow \Mod_B$ is split. Deligne's idea is to construct an algebra $B$ which is universal with the property that a fixed epimorphism $p$ with codomain equal to the unit object is split after tensoring with $B$, and then checking that this algebra is faithfully flat. If we successively split more and more epimorphisms this way, we eventually obtain a faithfully flat algebra which is projective as a module over itself (see Proposition~\ref{prop:projective_algebra}).
 
 To get an algebra that is a generator, we use Deligne's second construction. For each object $X$ with integral dimension (rank), there exists a faithfully flat algebra $A$ such that $A \otimes X \cong A^{\oplus n}$ as $A$-modules (see \cite[Proposition~7.17]{DELIGNE}). Again applying this successively, we obtain an algebra $A$ which has this property for a generating set of objects of $\ca{C}$, and it follows that $A$ is a generator of $\ca{C}_A$.
 
 While Deligne's constructions work for general pre-geometric categories, the proofs have to be modified a bit. Proving that the first algebra is faithfully flat is more involved in the first case (in the situation Deligne considers, every non-zero object is faithfully flat). Luckily, Deligne's proof that the algebra is not zero requires only minor modifications to show that the criterion for faithful flatness of Lemma~\ref{lemma:faithfully_flat_condition} is satisfied. However, to get the construction to work in the first place, we need to impose an additional axiom that is automatically satisfied in the case Deligne considers. 
 
 For the second construction, faithful flatness is not an issue, since Deligne proves that the unit is split. His proof therefore works more or less verbatim in our setting. What causes some difficulties in the general case is that the rank of an object with a dual need not be an integer (for example, the rank of a locally free coherent sheaf is only locally constant). By suitably ``padding'' the object with the correct number of ``free'' summands, we can however show that every such object is a direct summand of an object with integral rank (see Lemma~\ref{lemma:integral_rank}).
 
 \subsection{Proof of Theorem~\ref{thm:description}}
 The goal of this section is to prove Theorem~\ref{thm:intrinsic}, which is a version of Theorem~\ref{thm:description} phrased in the language of geometric categories. We will defer the proofs of the two key propositions to the next two sections. Fix a tensor category $\ca{C}$, and let $X \in \ca{C}$ be an object with a dual. Then we define the \emph{trace} of an endomorphism $f$ of  $X$ to be the composite
 \[
 \xymatrix@!C=45pt{I \ar[r]^-{\mathrm{coev}} & X^{\vee} \otimes X \ar[r]^-{s_{X^{\vee},X}} & X \otimes X^{\vee} \ar[r]^{f\otimes \id_{X^{\vee}}} & X \otimes X^{\vee} \ar[r]^-{\mathrm{ev}} & I}
 \]
 in the endomorphism ring $\End(I)$. The \emph{rank} of $X$ is simply the trace of the identity on $X$ (note that what we call the rank was called the dimension of $X$ in \cite[\S 7]{DELIGNE}). We denote the rank of $X$ by $\rk(X)$. The two main tools for Deligne's construction are exterior and symmetric powers of an object $X$. Since we assume that $R$ contains $\mathbb{Q}$, these are direct summands of the $n$-fold tensor power of $X$. Explicitly, the two elements
 \[
 s  =\sum_{\sigma \in \Sigma_n} \sigma \quad \text{and} \quad a=\sum_{\sigma \in \Sigma_n} \sgn(\sigma) \sigma
 \]
 of the group algebra $\mathbb{Q}[\Sigma_n]$ define idempotents $s \slash n!$ and $a \slash n!$ of $X^{\otimes n}$ (where $\sigma$ acts on $X^{\otimes n}$ via the symmetry of the monoidal structure of $\ca{C}$). The splittings of these two idempotents are called the $n$-th \emph{symmetric} respectively the $n$-th \emph{exterior} power of $X$, and they are denoted by $\Sym^n(X)$ respectively $\Lambda^n (X)$. To disambiguate we will denote the corresponding constructions in the category $\ca{C}_A$ of $A$-modules for some commutative algebra $A \in \ca{C}$ by $\Sym^n_A$ and $\Lambda^n_A$. 
 
\begin{rmk}\label{rmk:free_algebras}
 The direct sum
 \[
\Sym(X) \defl \bigoplus_{n \in \mathbb{N}} \Sym^n(X) 
 \]
 is the (underlying object of the) free commutative algebra on $X$, and the $\mathbb{N}$-graded object
 \[
\Lambda(X) \defl \bigr (\Lambda^n(X)\bigl)_{n \in \mathbb{N}} 
 \]
 is the free commutative algebra on $X$ (thought of as an $\mathbb{N}$-graded object concentrated in degree 1) in the category of $\mathbb{N}$-graded objects in $\ca{C}$. The symmetry in this category is defined according to the usual sign conventions (that is, switching objects of odd degrees introduces a factor of $-1$). 
\end{rmk}
 
 \begin{thm}\label{thm:intrinsic}
 Suppose that $R$ is a $\mathbb{Q}$-algebra. Let $\ca{C}$ be an abelian tensor category which has the following properties.
 
 \begin{enumerate}
\item[(i)] There exists a generating set of objects with duals (in other words, the tensor category $\ca{C}$ is pre-geometric).
 
\item[(ii)] If $X \in \ca{C} $ has a dual and if $\rk(X)=0$, then $X=0$.

\item[(iii)] If $X \in \ca{C}$ has a dual, then there exists an integer $n \geq 0$ such that $\Lambda^n X=0$.
 
 \item[(iv)] If $X \in \ca{C}$ has a dual and $p \colon X \rightarrow I$ is an epimorphism, then the dual morphism $I \cong I^{\vee} \rightarrow X^{\vee}$ is a monomorphism whose cokernel has a dual.
 \end{enumerate}
 Then $\ca{C}$ is a geometric category. In particular, there exists an Adams stack $Y$ over $R$ such that $\ca{C} \simeq \QCoh(Y)$.
 \end{thm}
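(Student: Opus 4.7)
By Corollary~\ref{cor:faithfully_flat_coverings_vs_algebras}, the task reduces to constructing a faithfully flat affine algebra $A \in \ca{C}$. By Lemma~\ref{lemma:affine_algebra_characterization}, affineness amounts to $A$ being projective as an $A$-module together with $A$ generating $\ca{C}_A$. Following the overview, I will produce two faithfully flat algebras $A_1$ and $A_2$ satisfying these two conditions separately, and then take $A \defl A_1 \otimes A_2$: faithful flatness is stable under tensor products by Lemma~\ref{lemma:faithfully_flat_condition}, and both the projectivity and the generator property persist under base change along the tensor functors $\ca{C} \to \ca{C}_{A_i}$, so their product yields the desired affine algebra.

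For $A_1$ I adapt Deligne's first construction. Given any epimorphism $p \colon X \twoheadrightarrow I$ with $X$ dualizable, axiom~(iv) supplies a short exact sequence $0 \to I \to X^\vee \to Y \to 0$ with $Y$ dualizable, which lets me form the universal commutative algebra $B(p)$ over which $p$ acquires a section; concretely $B(p)$ is a quotient of $\Sym(X^\vee)$ carrying a canonical filtration whose associated graded is $\Sym(Y)$. Verifying faithful flatness of $B(p)$ via Lemma~\ref{lemma:faithfully_flat_condition} is the main new difficulty: each graded piece $\Sym^n(Y)$ is a direct summand of the dualizable $Y^{\otimes n}$ and hence flat, and the filtration then shows $B(p)/I$ is flat; the fact that $I \to B(p)$ is monic is the analogue of Deligne's non-vanishing lemma, where axioms~(ii) and (iii) enter via rank and exterior-power obstructions. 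Iterating this construction transfinitely over all epimorphisms with dualizable domain onto the unit of the current module category---and passing to filtered colimits at limit stages, which preserve faithful flatness---yields an algebra $A_1$ in which every such epimorphism splits. Since the dualizable objects generate $\ca{C}_{A_1}$ by Proposition~\ref{prop:closure_under_affine_functors} and Lemma~\ref{lemma:pregeometric_implies_finitary}, this forces $A_1$ to be projective in $\ca{C}_{A_1}$.

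For $A_2$ I use Deligne's second construction: for each dualizable $X$ of integer rank $n$, form the universal algebra $T(X)$ such that $T(X) \otimes X \cong T(X)^{\oplus n}$ as a $T(X)$-module. Deligne's proof that $T(X)$ is faithfully flat (via a splitting of the unit) transfers essentially unchanged to the pre-geometric setting, relying only on axiom~(iii) to cut off the relevant exterior algebras. To reduce the general case to the case of integer rank, I would prove a padding lemma: since $R \supseteq \mathbb{Q}$ and $\Lambda^n(X)=0$ for $n$ large, the rank of any dualizable $X$ satisfies a monic polynomial relation with integer coefficients, so its image in $\End(I)$ decomposes $\End(I)$ as a finite product of rings in each of which the rank is a non-negative integer; forming a direct sum of $X$ with appropriately many copies of the unit in each component exhibits $X$ as a direct summand of a dualizable object of integer rank. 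Iterating $T(-)$ transfinitely over a generating set of dualizable objects (which exists by axiom~(i) and Remark~\ref{rmk:duals_dense}) produces an algebra $A_2$ that is a generator of $\ca{C}_{A_2}$.

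The main obstacle is the faithful-flatness verification for $B(p)$ in the first step: in Deligne's Tannakian setting every nonzero object is automatically flat, so this check is essentially free, but in the general pre-geometric case the hypothesis of Lemma~\ref{lemma:faithfully_flat_condition} must be established directly, and this is precisely where axiom~(iv) becomes indispensable---without dualizability of the cokernel $Y$ of $p^\vee$, the filtration of $B(p)$ would have uncontrolled associated graded, leaving flatness out of reach. Once $A \defl A_1 \otimes A_2$ is constructed, Corollary~\ref{cor:faithfully_flat_coverings_vs_algebras} produces a faithfully flat covering of $\ca{C}$, Proposition~\ref{prop:geometric_equals_weakly_Tannakian} identifies $\ca{C}_{\fp}$ as weakly Tannakian, and Theorem~\ref{thm:geometric_equals_qc} yields an Adams stack $Y$ with $\ca{C} \simeq \QCoh(Y)$.
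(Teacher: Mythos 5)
Your overall strategy is the same as the paper's: run Deligne's two constructions separately to get a faithfully flat algebra $A_1$ that is projective over itself and a faithfully flat algebra $A_2$ that generates its module category, then tensor them together and invoke Corollary~\ref{cor:faithfully_flat_coverings_vs_algebras}. The treatment of $A_2$ (the padding lemma decomposing $\End(I)$ via the integral polynomial relation on the rank, then splitting off copies of the unit) matches the paper's Lemma~\ref{lemma:integral_rank} and Proposition~\ref{prop:generating_algebra}, and your identification of the flatness of $\mathrm{coker}(p^\vee)$ as the crux of the first construction is exactly right.

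Two points need repair. First, your construction of $A_1$ by transfinitely iterating over epimorphisms onto the unit ``of the current module category'' has no termination argument: each base change $\ca{C}_{A_\alpha} \to \ca{C}_{A_{\alpha+1}}$ can create new dualizable objects and new epimorphisms onto the unit, and nothing bounds the process. The paper avoids iteration entirely: it splits only the set of epimorphisms $p \colon X \to I$ with $X$ dualizable \emph{in $\ca{C}$} (a single filtered colimit of finite tensor products), and then proves projectivity of $A$ in $\ca{C}_A$ by a refinement argument --- given an epimorphism $q \colon M \to N$ in $\ca{C}_A$ and a map $I \to UN$, one pulls back, covers by a direct sum of dualizable objects of $\ca{C}$, and uses finite presentability of $I$ to extract a \emph{finite} subsum that still maps epimorphically onto $I$; this produces an epimorphism already handled by the construction. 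That refinement step is genuine content your outline lacks. Second, you attribute the monicity of $I \to B(p)$ to axioms~(ii) and~(iii) via ``rank and exterior-power obstructions.'' In the paper's generalization this is not how it goes: monicity follows from axiom~(iv) itself (which asserts $p^\vee$ is monic) together with the flatness of dualizable objects and exactness of filtered colimits; axioms~(ii) and~(iii) are used only in the second construction (to kill the complement $N$ of $A^{\oplus n}$ in $A \otimes X$ and to obtain integral ranks). Deligne's original non-vanishing argument via ranks does not transfer to the weakly Tannakian setting, which is precisely why axiom~(iv) was added to the hypotheses.
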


\begin{rmk}
 As we will see in the proof, it suffices that Condition~(iv) holds for objects $X$ constituting a generating set of $\ca{C}$, and that the cokernel in question is flat. However, as we have seen in the introduction, the stronger condition is a necessary condition.
\end{rmk}

\begin{rmk}
 Deligne considered the case where every finitely presentable object of the tensor category $\ca{C}$ has a dual, and where the endomorphism ring of the unit object is a field. These assumptions and Condition~(iii) imply the other conditions. It would be interesting to know whether or not conditions (i)-(iv) are independent in the general case. 

 As a referee pointed out, Conditions~(i) and (iv) alone do not imply (ii): in the category of $\mathbb{Z}$-graded abelian groups (with symmetry involving minus signs), the rank is given by the alternating sum of the ranks of the individual pieces. This pre-geometric tensor category clearly satisfies Condition (iv) since the unit object is projective.
 \end{rmk}
 
 Note that Theorem~\ref{thm:intrinsic}, which is phrased using the language introduced in \S \ref{section:geometric}, immediately implies the corresponding description result for weakly Tannakian categories.

\begin{proof}[Proof of Theorem~\ref{thm:description}]
 By Proposition~\ref{prop:geometric_equals_weakly_Tannakian}, passage to ind-objects (respectively restriction to finitely presentable objects) gives an equivalence between weakly Tannakian categories and geometric categories. Under this equivalence, the conditions of Theorem~\ref{thm:intrinsic} and Theorem~\ref{thm:description} correspond to each other.
\end{proof}

 \begin{rmk}
  Our assumption that $R$ is a non-zero $\mathbb{Q}$-algebra means that for any non-zero symmetric monoidal $R$-linear category, the endomorphism ring $\End(I)$ of the unit object contains $\mathbb{Q}$. This follows from the fact that any non-zero ring homomorphism whose domain is a field is injective. Theorem~\ref{thm:intrinsic} is true in the case $\ca{C}=0$ (with $Y=\Spec(0)$), so we may assume without loss of generality that the tensor category $\ca{C}$ is non-zero, and hence that $\mathbb{Q} \subseteq \End(I)$.
 \end{rmk}

 As already mentioned in the overview of this section, our proof of Theorem~\ref{thm:intrinsic} relies on the same two constructions as Deligne's proof of the analogous result for Tannakian categories in \cite[\S 7]{DELIGNE}. The first key result is \cite[Lemme~7.14]{DELIGNE}, and we will prove its generalization in \S \ref{section:splitting_epis_proof}.
 
 \begin{prop}\label{prop:splitting_epis}
 Suppose that $R$ is a $\mathbb{Q}$-algebra. Let $\ca{C}$ be an abelian tensor category which satisfies Conditions~(i) and (iv) of Theorem~\ref{thm:intrinsic}, let $X$ be an object with a dual, and let $p \colon X \rightarrow I$ be an epimorphism. Then there exists a faithfully flat commutative algebra $A \in \ca{C}$ such that $A \otimes p$ is split as a morphism of $A$-modules.
 \end{prop}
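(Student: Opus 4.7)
The plan is to follow Deligne's construction in \cite[\S 7]{DELIGNE}, replacing the step where Deligne invokes the assumption that every non-zero object of a Tannakian category is faithfully flat with an explicit associated-graded argument.

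I would first construct the universal algebra $A$ splitting $p$. By duality and adjunction, an $A$-module section of $A \otimes p \colon A \otimes X \to A$ is the same datum as a morphism $\alpha \colon X^\vee \to A$ in $\ca{C}$ whose composite with $p^\vee \colon I \to X^\vee$ is the unit $\eta \colon I \to A$. The universal such algebra is
\[
A \defl \Sym(X^\vee) \ten{\Sym(I)} I,
\]
where the morphism $\Sym(I) \to \Sym(X^\vee)$ is induced by $p^\vee$ and $\Sym(I) \to I$ is the augmentation; informally, $A = \Sym(X^\vee)/(p^\vee - 1)$. By construction $A \otimes p$ automatically admits an $A$-module section, so the content of the proposition reduces to verifying that $A$ is faithfully flat.

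To do so I would apply Lemma~\ref{lemma:faithfully_flat_condition}: it suffices to show that the unit $\eta\colon I \to A$ is a monomorphism and that $A/I$ is flat. Condition~(iv) supplies the short exact sequence $0 \to I \xrightarrow{p^\vee} X^\vee \to K \to 0$ with $K$ admitting a dual. Equip $A$ with the increasing filtration $F^n A$ defined as the image of $\Sym^{\leq n}(X^\vee) = \bigoplus_{k \leq n}\Sym^k(X^\vee)$, and establish by induction on $n$ that
\[
F^n A / F^{n-1} A \cong \Sym^n(K)
\]
(with the convention $F^{-1} A = 0$). The $n=0$ case then gives $F^0 A \cong I$, so $\eta$ is a monomorphism. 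For flatness, $A/I \cong \colim_n F^n A / F^0 A$ is a filtered colimit of iterated extensions of the objects $\Sym^k(K)$ for $k\geq 1$; each such $\Sym^k(K)$ is a direct summand of $K^{\otimes k}$ in characteristic zero, hence has a dual and is therefore flat. Extensions and filtered colimits of flat objects in the Grothendieck abelian category $\ca{C}$ remain flat, so $A/I$ is flat and faithful flatness of $A$ follows.

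The main obstacle is the identification of the associated graded, which amounts to a Koszul-type regularity statement for the element $p^\vee \in \Sym(X^\vee)$. The cokernel description is the easier half: tensoring $0 \to I \to X^\vee \to K \to 0$ with the flat object $\Sym^{n-1}(X^\vee)$ and composing with the natural surjection $X^\vee \otimes \Sym^{n-1}(X^\vee) \to \Sym^n(X^\vee)$ yields a right-exact sequence identifying the cokernel of multiplication by $p^\vee$ as $\Sym^n(K)$. The harder half is showing that multiplication by $p^\vee$ is a monomorphism $\Sym^{n-1}(X^\vee) \to \Sym^n(X^\vee)$, so that the Rees algebra of $F^\bullet A$ is isomorphic to the graded algebra $\Sym(X^\vee)$ itself (with the Rees variable $s$ corresponding to $p^\vee$). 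I would attack this by an inductive Rees-algebra argument combining the monomorphism property of $p^\vee$ from condition~(iv), the exactness of tensoring with the flat objects $\Sym^{n-1}(X^\vee)$, and the characteristic-zero description of $\Sym^n$ as a direct summand of the tensor power cut out by the symmetrizer idempotent.
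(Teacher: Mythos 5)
Your construction of $A = I \ten{\Sym(I)} \Sym(X^{\vee})$ and the argument that $A\otimes p$ splits are exactly those of the paper, and your overall strategy for faithful flatness (verify the criterion of Lemma~\ref{lemma:faithfully_flat_condition}: $\eta$ monic, $A\slash I$ flat) is also the right one. The problem is that you have routed the verification through a Koszul-type regularity statement --- that multiplication by $p^{\vee}$ gives a \emph{monomorphism} $\Sym^{n-1}(X^{\vee}) \rightarrow \Sym^{n}(X^{\vee})$ for every $n$, so that the filtration $F^{n}A$ has associated graded $\Sym^{n}(K)$ --- and this is precisely the step you do not prove. You correctly identify it as ``the harder half'' and only indicate that you ``would attack'' it by an inductive Rees-algebra argument; but in a general abelian tensor category one cannot argue with elements or local freeness, the extension $0 \rightarrow I \rightarrow X^{\vee} \rightarrow K \rightarrow 0$ need not split, and setting up the required filtration of $\Sym^{n}(X^{\vee})$ with graded pieces $\Sym^{n-j}(K)$ is genuine work. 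Since everything else in your write-up is routine, this unproven claim is the entire content of the proposition, so the proposal has a real gap. (Your ``easier half,'' the identification of the cokernel of multiplication by $p^{\vee}$ with $\Sym^{n}(K)$, is fine.)

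The paper shows that this regularity statement can be avoided altogether. By Lemma~\ref{lemma:graded_pushout}, $A$ is the filtered colimit of the $\Sym^{n}(X^{\vee})$ along multiplication by $p^{\vee}$, and the unit $\eta$ is the filtered colimit of the composites $I \rightarrow \Sym^{n}(X^{\vee})$ given by the $n$-fold product of $p^{\vee}$. Because monomorphisms and flat objects are stable under filtered colimits in a Grothendieck category, it suffices to show that each of these \emph{composites} is monic with flat cokernel --- there is no need for the individual transition maps to be monic. Each such composite is a retract (via the symmetrizer idempotent $s\slash n!$) of $(p^{\vee})^{\otimes n} \colon I \rightarrow (X^{\vee})^{\otimes n}$, and the latter is handled directly from Condition~(iv) of Theorem~\ref{thm:intrinsic}: tensoring the monomorphism $p^{\vee}$ with the flat objects $(X^{\vee})^{\otimes j}$ produces a chain of monomorphisms $I \rightarrow X^{\vee} \rightarrow \cdots \rightarrow (X^{\vee})^{\otimes n}$ with flat successive quotients $K \otimes (X^{\vee})^{\otimes j}$, and stability of flatness under extensions (Lemma~\ref{lemma:flatness_from_filtration}) finishes the argument. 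If you want to keep your associated-graded approach you must actually supply the proof of regularity; otherwise, replace it by the weaker statement about the composites, which your hypotheses already give you.
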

 
 We will use this result to show that there exists a faithfully flat algebra in $\ca{C}$ which is projective as a module over itself (see Proposition~\ref{prop:projective_algebra} below). To do this we need the following lemma.
 
 \begin{lemma}\label{lemma:faithfully_flat_stable_under_filtered_colimits}
 Let $\ca{C}$ be a pre-geometric category. Then faithfully flat commutative algebras in $\ca{C}$ are closed under filtered colimits.
 \end{lemma}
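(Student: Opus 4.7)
My plan is to reduce the claim to the characterization of faithful flatness given in Lemma~\ref{lemma:faithfully_flat_condition}: a commutative algebra $A \in \ca{C}$ is faithfully flat if and only if the unit $\eta \colon I \rightarrow A$ is a monomorphism with flat cokernel. Given a filtered diagram $(A_i)_{i \in \ca{I}}$ of faithfully flat commutative algebras, I would form $A \defl \colim_i A_i$, which inherits a commutative algebra structure from the $A_i$ since the tensor product on $\ca{C}$ preserves colimits in each variable, so that $A \otimes A \cong \colim_{i,j} A_i \otimes A_j$ and the compatible family of multiplications $A_i \otimes A_i \rightarrow A_i$ induces the required multiplication on $A$.

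Next I would verify the monomorphism condition. Since the inclusion $\ca{I} \rightarrow \ca{I}$ of the constant diagram with value $I$ into $(A_i)$ yields $\eta_A$ as $\colim_i \eta_{A_i}$, and since $\ca{C}$ is pre-geometric and therefore Grothendieck abelian, filtered colimits in $\ca{C}$ are exact. In particular they preserve the monomorphisms $\eta_{A_i}$, so $\eta_A$ is a monomorphism, and its cokernel is $\colim_i(A_i/I)$ by exactness.

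It then remains to check that this filtered colimit of flat objects is flat. For any monomorphism $X \rightarrowtail Y$ in $\ca{C}$, tensoring with each flat $A_i/I$ gives a monomorphism $(A_i/I) \otimes X \rightarrowtail (A_i/I) \otimes Y$, and since $\otimes$ commutes with filtered colimits and filtered colimits preserve monomorphisms, the induced map $(A/I) \otimes X \rightarrow (A/I) \otimes Y$ is again a monomorphism. Hence $A/I$ is flat, and Lemma~\ref{lemma:faithfully_flat_condition} yields that $A$ is faithfully flat.

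I do not expect a serious obstacle here: the argument is a straightforward combination of exactness of filtered colimits in a Grothendieck abelian category, cocontinuity of the tensor product (which is automatic in a closed symmetric monoidal category), and the criterion of Lemma~\ref{lemma:faithfully_flat_condition}. The only mildly delicate point is making sure that the algebra structure on $A$, the identification of $\eta_A$ with $\colim_i \eta_{A_i}$, and the identification of the cokernel of $\eta_A$ with $\colim_i (A_i/I)$ are all compatible, but these are formal consequences of cocontinuity of $\otimes$ and exactness of filtered colimits.
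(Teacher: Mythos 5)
Your proposal is correct and follows essentially the same route as the paper: both reduce to the criterion of Lemma~\ref{lemma:faithfully_flat_condition} and then use exactness of filtered colimits in a locally finitely presentable (Grothendieck abelian) category together with cocontinuity of the tensor product to see that the unit stays monic and its cokernel stays flat in the colimit. You simply spell out in more detail the steps the paper leaves implicit.
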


\begin{proof}
 Since filtered colimits are exact in a locally finitely presentable category, flat objects are closed under filtered colimits. The claim follows from the fact that a commutative algebra $A$ is faithfully flat if and only if the sequence
 \[
 \xymatrix{0 \ar[r] & I \ar[r] & A \ar[r] & A\slash I \ar[r] & 0} 
 \]
 consists of flat objects and is exact (see Lemma~\ref{lemma:faithfully_flat_condition}).
\end{proof} 
 
 \begin{prop}\label{prop:projective_algebra}
Suppose that $R$ is a $\mathbb{Q}$-algebra, and let $\ca{C}$ be an abelian tensor category which satisfies Conditions~(i) and (iv) of Theorem~\ref{thm:intrinsic}. Then there exists a faithfully flat algebra $A \in \ca{C}$ such that $A \in \ca{C}_A$ is projective.
 \end{prop}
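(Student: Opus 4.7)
I construct $A$ as a transfinite filtered colimit $A = \colim_{\lambda < \kappa} A_\lambda$ of faithfully flat commutative algebras in $\ca{C}$, where at each successor stage Proposition~\ref{prop:splitting_epis} is used to split all epimorphisms of a representative small class. Faithful flatness propagates through the construction by Lemma~\ref{lemma:faithfully_flat_stable_under_filtered_colimits}, together with the fact (easily extracted from Lemma~\ref{lemma:faithfully_flat_condition}) that finite tensor products of faithfully flat algebras remain faithfully flat. So $A$ will automatically be faithfully flat; the real content of the proof is to arrange that $A$ is projective as an object of $\ca{C}_A$.

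\textbf{The iteration and the projectivity check.} Set $A_0 = I$. At a successor stage $\lambda + 1$, I enumerate a representative small set of epimorphisms $p \colon X \to A_\lambda$ in $\ca{C}_{A_\lambda}$ with $X$ dualizable in $\ca{C}_{A_\lambda}$, and I apply Proposition~\ref{prop:splitting_epis}, carried out inside the pre-geometric tensor category $\ca{C}_{A_\lambda}$, to obtain for each $p$ a faithfully flat algebra $B_p$ in $\ca{C}_{A_\lambda}$ after tensoring with which $p$ splits. I take $A_{\lambda + 1}$ to be the tensor product of all these $B_p$ (viewed as a filtered colimit of finite sub-tensor-products), still faithfully flat in $\ca{C}_{A_\lambda}$ and hence in $\ca{C}$. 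At limit stages I take filtered colimits, and I choose $\kappa$ large enough that the construction stabilizes. To check that the resulting $A$ is projective, I split an arbitrary epi $q \colon N \to A$ in $\ca{C}_A$ as follows: since $\ca{C}_A$ is pre-geometric and $A$ is finitely presentable as an $A$-module (being the unit), there exists a dualizable $X \in \ca{C}_A$ and a morphism $\epsilon \colon X \to N$ with $q\epsilon$ already epi. Because $X$ together with its duality data is finitely presentable, it descends: there is a stage $\lambda$, a dualizable $X_\lambda \in \ca{C}_{A_\lambda}$, and an epimorphism $p_\lambda \colon X_\lambda \to A_\lambda$ whose base change along $A_\lambda \to A$ yields $q\epsilon$ (epi-ness of $p_\lambda$ follows since the faithfully flat map $A_\lambda \to A$ reflects epimorphisms). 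By construction $p_\lambda$ splits after tensoring with $A_{\lambda + 1}$, and pushing this splitting forward to $A$ produces a section $\sigma \colon A \to X$ of $q\epsilon$ in $\ca{C}_A$; the composite $\epsilon \sigma$ is then a section of $q$.

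\textbf{Main obstacle.} The delicate point is that Proposition~\ref{prop:splitting_epis} is being invoked not in $\ca{C}$ itself but inside $\ca{C}_{A_\lambda}$, so I must verify that its hypotheses --- conditions (i) and (iv) of Theorem~\ref{thm:intrinsic} --- transfer from $\ca{C}$ to $\ca{C}_{A_\lambda}$ for every faithfully flat $A_\lambda$ encountered during the iteration. Pre-geometricity (i) is immediate from the remark following Notation~\ref{notation:module_category}. For (iv), given an epi $X \to A_\lambda$ in $\ca{C}_{A_\lambda}$ with $X$ dualizable, one must show that the dual $A_\lambda \to X^\vee$ is mono with dualizable cokernel in $\ca{C}_{A_\lambda}$ --- equivalently, that the kernel of $X \to A_\lambda$ is dualizable. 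The plan is to trivialize $X$ by a further faithfully flat extension of $A_\lambda$ so that it becomes of the form $A_\lambda \otimes V$ for some $V$ dualizable in $\ca{C}$, reducing the question to condition (iv) of $\ca{C}$ combined with the flatness of $A_\lambda$ (which ensures that the dualized short exact sequence remains exact after base change). This transfer of condition (iv) is the main technical hurdle of the argument.
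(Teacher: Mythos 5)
There is a genuine gap, and it sits exactly where you flag it: the transfer of Condition~(iv) from $\ca{C}$ to $\ca{C}_{A_\lambda}$ is never carried out, and your sketched plan for it does not work with the hypotheses available. ``Trivializing'' an arbitrary dualizable $A_\lambda$-module $X$ so that it becomes $A_\lambda \otimes V$ with $V$ dualizable in $\ca{C}$ is essentially the content of the generator construction (Propositions~\ref{prop:unit_direct_summand} and \ref{prop:generating_algebra}), which requires Conditions~(ii) and (iii) of Theorem~\ref{thm:intrinsic}; the present proposition assumes only (i) and (iv). Even granting flatness of $A_\lambda$, there is no reason the kernel of an epimorphism from an arbitrary dualizable $A_\lambda$-module onto $A_\lambda$ is controlled by Condition~(iv) of $\ca{C}$, which only speaks about epimorphisms onto the unit $I$ from objects dualizable \emph{in} $\ca{C}$. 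So your induction cannot get past its first successor stage as written.

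The paper's proof shows that the whole transfinite iteration is unnecessary. One splits, in a single step, the (representative set of) epimorphisms $p \colon X \to I$ in $\ca{C}$ itself with $X$ dualizable, taking $A$ to be the filtered colimit of finite tensor products of the algebras furnished by Proposition~\ref{prop:splitting_epis}; faithful flatness follows from Lemma~\ref{lemma:faithfully_flat_stable_under_filtered_colimits}. The move you are missing is the reduction in the projectivity check: given an epimorphism $q \colon M \to N$ of $A$-modules and a morphism $I \to UN$ in $\ca{C}$ (with $U$ the forgetful functor), one pulls $Uq$ back along $I \to UN$, covers the resulting epimorphism onto $I$ by a direct sum of objects with duals (Condition~(i)), and uses finite presentability of $I$ to cut down to a \emph{finite} sub-sum, producing an epimorphism $p \colon X \to I$ in $\ca{C}$ with $X$ dualizable in $\ca{C}$ together with a compatible map $X \to UM$. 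The adjunction $A \otimes - \dashv U$ then converts the splitting of $A \otimes p$ into the required lift of $I \to UN$ through $UM$. This reduction replaces epimorphisms onto $A$ in $\ca{C}_A$ by epimorphisms onto $I$ in $\ca{C}$, so Proposition~\ref{prop:splitting_epis} only ever needs to be applied in $\ca{C}$ and the problem of transferring Condition~(iv) never arises.
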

 
\begin{proof}
 Since there is only a set of isomorphism classes of finitely presentable objects in $\ca{C}$, there is also only a set of isomorphism classes of epimorphisms $p \colon X \rightarrow I$ where $X$ has a dual. From Proposition~\ref{prop:splitting_epis} we know that there is a set of faithfully flat commutative algebras such that each of these epimorphisms is split after base change to one of them. By taking the filtered colimit of all finite tensor products of these algebras, we obtain a single commutative algebra $A$ with the property that $A \otimes p$ is split as a morphism of $A$-modules for \emph{all} such epimorphisms. This algebra is faithfully flat by Lemma~\ref{lemma:faithfully_flat_stable_under_filtered_colimits}, and we claim that $A$ is projective as an $A$-module.
 
 To see this, we need to show that $\ca{C}_A(A,-) \cong \ca{C}(I,U-)$ is right exact, where $U \colon \ca{C}_A \rightarrow \ca{C}$ denotes the forgetful functor. Let $q \colon M \rightarrow N$ be an epimorphism in $\ca{C}_A$. We need to show that for every morphism $I \rightarrow UN$ in $\ca{C}$, there exists a lift to $UM$. We claim that there exists an object $X$ with a dual, together with an epimorphism $p \colon X \rightarrow I$ and a morphism $X \rightarrow UM$ in $\ca{C}$ such that the diagram
 \[
 \xymatrix{X \ar[d]_p \ar[r] & UM \ar[d]^{Uq} \\ I \ar[r] & UN}
 \]
 is commutative. We can prove this claim as follows.
 
 First note that we can find a commutative diagram as above where instead of $X$ we have a (possibly infinite) direct sum $Z$ of objects with duals. Indeed, the pullback of $Uq$ to $I$ is still an epimorphism, and we can write this pullback as a quotient of such a direct sum of objects with duals by Condition~(i) of Theorem~\ref{thm:intrinsic}. It remains to show that there exists a finite direct sum $X \subseteq Z$ such that the restriction of $Z \rightarrow I$ to $X$ is still an epimorphism. To see this, we first write $Z$ as directed union of the finite direct sums $Z_i$ of its summands. Let $Z_i \rightarrow I_i \rightarrow I$ be the image factorization of the restriction of the epimorphism $Z \rightarrow I$ to $Z_i$. Since $\ca{C}$ is Grothendieck abelian, the filtered colimit of the $Z_i \rightarrow I_i$ provides an image factorization of this epimorphism, hence we must have $I=\colim_i I_i$. The fact that $I$ is finitely presentable (see Definition~\ref{def:tensor_category}) implies that $I=I_i$ for some index $i$. Thus $X=Z_i$ gives the desired object with dual.

 Since $A \otimes -$ is left adjoint to $U$, the above diagram is equal to the outer rectangle in the diagram
 \[
 \xymatrix{ X \ar[r]^-{\eta \otimes X} \ar[d]_p & U(A \otimes X) \ar[d]^{U(A\otimes p)} \ar[r] & U(A \otimes UM) \ar[d]^{U(A \otimes U q)} \ar[r]^-{U \varepsilon_M} & UM \ar[d]^{U q} \\ I \ar[r]_-{\eta \otimes I} & U(A \otimes I) \ar[r] & U(A \otimes UN) \ar[r]_-{U \varepsilon_N} & UN}
 \]
 where $\eta$ denotes the unit of $A$. Thus it suffices to show that there exists a lift
 \[
 \xymatrix{& U(A \otimes X) \ar[d]^{U(A \otimes p)} \\ I \ar@{-->}[ru] \ar[r]_-{\eta \otimes I} & U(A \otimes I)}
 \]
 in $\ca{C}$. Again using the adjunction $A \otimes - \dashv U$, this corresponds to a splitting of $A \otimes p$ as a morphism of $A$-modules, which exists by construction of $A$.
\end{proof}

 To show that there exists a faithfully flat commutative algebra $A$ in $\ca{C}$ with the property that $A$ is a generator of the category $\ca{C}_A$ of $A$-modules, we use the following construction of Deligne. In this case, the proof from \cite[\S 7]{DELIGNE} works almost verbatim. We prove the following proposition in \S \ref{section:generator}.
 
 \begin{prop}\label{prop:unit_direct_summand}
 Suppose that $R$ is a $\mathbb{Q}$-algebra. Let $\ca{C}$ be an abelian tensor category satisfying Conditions~(i)-(iii) of Theorem~\ref{thm:intrinsic}. Let $X$ be an object with a dual in $\ca{C}$, and let $M$ be a direct summand of $A \otimes X$ in the category $\ca{C}_A$ of $A$-modules. If $\rk(M)$ is a natural number greater than zero, there exists a faithfully flat $A$-algebra $B$ such that $B$ is a direct summand of $B \otimes_A M$ as a $B$-module.
 \end{prop}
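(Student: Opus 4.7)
The plan is to adapt the construction from \cite[Proposition~7.17]{DELIGNE} to the pre-geometric setting. Since $M$ is a direct summand of the dualizable object $A \otimes X$ in $\ca{C}_A$, it is itself dualizable; write $M^\vee$ for its dual, with evaluation and coevaluation maps $\ev \colon M^\vee \ten{A} M \to A$ and $\coev \colon A \to M \ten{A} M^\vee$. The algebra $B$ will be the universal $A$-algebra equipped with both a ``generic section'' of $M$ and a ``generic functional'' on $M$ whose pairing equals the unit.

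Explicitly, I would let $T = \Sym_A(M \oplus M^\vee)$ be the free commutative $A$-algebra on $M \oplus M^\vee$ and let $\alpha \in T$ be the image of the element $\coev(1) \in M \ten{A} M^\vee$ under the canonical inclusion $M \ten{A} M^\vee \hookrightarrow \Sym^2_A(M \oplus M^\vee) \hookrightarrow T$. Set $B \defl T / (\alpha - 1)$. The splitting property is then immediate: the inclusions $M \hookrightarrow T \to B$ and $M^\vee \hookrightarrow T \to B$ yield, via $A$-linear extension and the dualizability of $M$, $B$-linear maps $r \colon B \ten{A} M \to B$ and $s \colon B \to B \ten{A} M$ respectively, and the relation $\alpha = 1$ translates into $r \circ s = \id_B$, exhibiting $B$ as a direct summand of $B \ten{A} M$ in $\ca{C}_B$.

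The real work is to prove that $A \to B$ is faithfully flat. By Lemma~\ref{lemma:faithfully_flat_condition}, it suffices to show that this map is a monomorphism whose cokernel is flat. The algebra $T$ carries a natural $\mathbb{Z}$-grading with $M$ in degree $+1$ and $M^\vee$ in degree $-1$, and since $\alpha$ is homogeneous of degree zero, $B$ inherits this grading. Each homogeneous piece of $T$ decomposes as a directed colimit of summands of tensor powers of $M$ and $M^\vee$, and under $\rk(M) = n$ together with Condition~(iii) of Theorem~\ref{thm:intrinsic} one has $\Lambda^{n+1}_A M = 0$. Following Deligne's calculation in \cite[Lemme~7.15]{DELIGNE}, this provides enough control on the graded pieces of $B$ to exhibit $A$ as a split $A$-module summand of $B$ and to write the cokernel as a directed colimit of flat objects. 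The principal obstacle is precisely this flatness verification: Deligne's Tannakian setting makes the rank a global integer and reduces the key identities to representation theory of $GL_n$, whereas here $\End_{\ca{C}}(I)$ is merely a commutative ring, so one must check that the relevant ``universal'' constructions in $\ca{C}_A$ retain enough structure for the filtration argument to terminate as in the Tannakian case.
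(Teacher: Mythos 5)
Your construction of $B$ agrees with the paper's (the quotient $T/(\alpha-1)$ is exactly the pushout $A \ten{\Sym_A(A)} \Sym_A(M \oplus M^{\vee})$ used there), and the derivation of the splitting $r \circ s = \id_B$ from the relation $\alpha = 1$ is the same universal-property argument. The problem is that the faithful flatness verification --- which you correctly identify as the real work --- is not actually carried out, and the tools you point to are not the ones that make it work. First, the vanishing $\Lambda^{n+1}_A M = 0$ plays no role in this proposition; it is used later (in the proof of Proposition~\ref{prop:generating_algebra}) to kill the complementary summand $N$, not to establish flatness here. Second, routing the argument through Lemma~\ref{lemma:faithfully_flat_condition} (monomorphism with flat cokernel) is the harder path: unlike in Proposition~\ref{prop:splitting_epis}, there is no axiom guaranteeing that the unit $A \rightarrow B$ is a monomorphism with flat cokernel, and the only way to see this is to first produce a retraction --- at which point one may as well conclude directly, since a flat algebra whose unit is split as a morphism of $A$-modules is automatically faithfully flat. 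Flatness of $B$ itself is cheap (it is a filtered colimit of flat objects, by Lemma~\ref{lemma:graded_pushout}); the entire difficulty is the splitting of the unit.

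The missing idea is the explicit retraction. Writing the degree-zero part of $B$ (in your $\mathbb{Z}$-grading) as the colimit of the objects $\Sym^n_A(M) \ten{A} \Sym^n_A(M^{\vee})$ along multiplication by the twisted coevaluation, one defines $\tau_n$ as the evaluation pairing $\Sym^n_A(M) \ten{A} \Sym^n_A(M^{\vee}) \rightarrow A$ divided by $\rk\bigl(\Sym^n_A(M)\bigr) = \binom{\rk(M)+n-1}{n}$; this is where the hypothesis that $\rk(M)$ is a positive natural number enters, since it makes these ranks positive integers and hence invertible in $\End(I) \supseteq \mathbb{Q}$. One must then check that the $\tau_n$ are compatible with the colimit maps, which comes down to a count of permutations $\rho \in \Sigma_{n+1}$ according to whether $\rho(1)=n+1$ (such summands contribute $\rk(M)\cdot\tau'_n$) or not (contributing $\tau'_n$), yielding the factor $(\rk(M)+n)/(n+1)$ that matches the ratio of consecutive ranks. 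Without this construction, or an equivalent one, the assertion that $A$ is a split $A$-module summand of $B$ is unsupported, so the proof is incomplete.
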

 
 In order to show that the above proposition implies the existence of the desired kind of faithfully flat algebra, we first need the following two results. First note that $\Lambda^n(X)$ and $\Sym^n(X)$ have a dual if $X$ does, since both are direct summands of $X^{\otimes n}$.
 
\begin{lemma}\label{lemma:trace_of_exterior_power}
 Let $\ca{C}$ be a tensor category and let $X \in \ca{C}$ be an object with a dual. Then the formulas
\begin{equation}\label{eqn:trace_of_exterior_power}
 \rk\bigr(\Lambda^n (X)\bigl) = \rk(X) \bigr(\rk(X) - 1 \bigl) \bigr(\rk(X) - 2 \bigl) \cdots \bigr(\rk(X) - n + 1 \bigl) \slash n! 
\end{equation}
and
\begin{equation}
 \rk \bigr(\Sym^n (X) \bigl) = \bigl( \rk(X) +n-1 \bigr) \bigl( \rk(X) +n-2 \bigr) \cdots \rk(X) \slash n!
\end{equation}
 hold in the ring of endomorphisms $\End(I)$ of the unit object $I$ of $\ca{C}$.
\end{lemma}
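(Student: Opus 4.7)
My plan is to reduce both formulas to a single trace computation on $X^{\otimes n}$ combined with classical generating-function identities for permutations counted by their number of cycles.

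The starting observation is that for any idempotent $e \colon Y \to Y$ in an $R$-linear symmetric monoidal category, if $e$ splits as $Y \xrightarrow{r} Z \xrightarrow{i} Y$ with $ri = \id_Z$ and $ir = e$, then the rank of $Z$ (when $Y$ has a dual, hence so does $Z$) equals the trace of $e$ acting on $Y$. Applying this to the idempotents $a/n!$ and $s/n!$ on $X^{\otimes n}$ gives
\[
 \rk\bigl(\Lambda^n(X)\bigr) = \tfrac{1}{n!}\sum_{\sigma \in \Sigma_n} \sgn(\sigma)\, \mathrm{tr}\bigl(\sigma \colon X^{\otimes n}\to X^{\otimes n}\bigr),
\]
and similarly for $\Sym^n(X)$ without the sign.

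The key step is then to show that for each permutation $\sigma \in \Sigma_n$ with cycle decomposition consisting of $c(\sigma)$ disjoint cycles, $\mathrm{tr}(\sigma \mid X^{\otimes n}) = \rk(X)^{c(\sigma)}$. By permuting the tensor factors using the symmetry, any $\sigma$ is conjugate (hence its trace equal) to a direct product of cyclic permutations acting on $X^{\otimes n_1} \otimes \cdots \otimes X^{\otimes n_{c(\sigma)}}$. Since traces are multiplicative under tensor products of endomorphisms, it suffices to show that the trace of a single cyclic permutation $\tau_m \colon X^{\otimes m} \to X^{\otimes m}$ equals $\rk(X)$. This is a direct graphical/string-diagram computation: the trace of $\tau_m$ contracts all $m$ strands into a single closed loop, which by definition of $\mathrm{coev}$ and $\mathrm{ev}$ equals $\rk(X)$. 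Formally one verifies it by induction on $m$ using the snake identities for the duality $X \dashv X^\vee$, or equivalently by rewriting $\mathrm{tr}(\tau_m)$ as $\mathrm{tr}\bigl(\id_X \colon X \to X\bigr)$ after sliding all the symmetries around the trace loop.

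With this in hand, both sums reduce to evaluating, at $t = \rk(X) \in \End(I)$, the universal polynomial identities
\[
 \sum_{\sigma \in \Sigma_n} \sgn(\sigma)\, t^{c(\sigma)} = t(t-1)(t-2)\cdots(t-n+1)
\]
and
\[
 \sum_{\sigma \in \Sigma_n} t^{c(\sigma)} = t(t+1)(t+2)\cdots(t+n-1)
\]
in $\mathbb{Z}[t]$. These are standard identities generating the signed and unsigned Stirling numbers of the first kind; they can be proved by a direct induction on $n$ (splitting the sum according to whether $n$ is a fixed point of $\sigma$ or lies in a cycle of length $\geq 2$), and then substituting $t = \rk(X)$ yields Formula~\eqref{eqn:trace_of_exterior_power} and its symmetric counterpart. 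The main obstacle is the trace-of-a-cycle identity; everything else is formal or combinatorial.
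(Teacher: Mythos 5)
Your proposal is correct and follows essentially the same route as the paper: both reduce the rank of $\Lambda^n(X)$ and $\Sym^n(X)$ to the trace of the idempotents $a/n!$ and $s/n!$ on $X^{\otimes n}$, and both rest on the key computation that the trace of a cyclic permutation of $X^{\otimes m}$ equals $\rk(X)$, so that $\mathrm{tr}(\sigma) = \rk(X)^{c(\sigma)}$. The only (immaterial) difference is the last step: you verify the Stirling-number polynomial identities directly by induction, whereas the paper observes that the resulting polynomial in $\rk(X)$ is universal and reads off its coefficients by evaluating in the category of $\mathbb{Q}$-vector spaces.
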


\begin{proof}
 This is proved in \cite[Formula~7.1.2]{DELIGNE}. To see that it works at the desired level of generality, we give a brief summary. First note that by cyclicity of traces, the trace of the identity of $\Lambda^n(X)$ is equal to the trace of the idempotent $a \slash n!$ of $X^{\otimes n}$. Similarly, the trace of the identity of $\Sym^n(X)$ is equal to the trace of the idempotent $s \slash n!$.

 Either as in the proof of \cite[Lemme~7.2]{DELIGNE} or, perhaps more straightforwardly, using string diagrams, one checks that the trace of a cyclic permutation of $X^{\otimes k}$ is equal to the trace of the identity of $X$, that is, to the rank $\rk(X)$ of $X$. Since the trace sends tensor products of morphisms to composites (this is again readily checked using string diagrams), it follows that the traces of the two idempotents
\[
 a \slash n!= \frac{1}{n!} \sum_{\sigma \in \Sigma_n} \sgn(\sigma) \sigma \quad \text{and} \quad s \slash n!= \frac{1}{n!}  \sum_{\sigma \in \Sigma_n} \sigma
\]
 are polynomials in $\rk(X)$ with natural number coefficients (independent of the $\mathbb{Q}$-linear tensor category $\ca{C}$). The claim follows from applying this fact in the case where $\ca{C}$ is the tensor category of $\mathbb{Q}$-vector spaces.
\end{proof}

 \begin{lemma}\label{lemma:integral_rank}
 Let $\ca{C}$ be an abelian tensor category which satisfies Condition~(iii) of Theorem~\ref{thm:intrinsic}, and let $X \in \ca{C}$ be an object with a dual. Then there exists an object $Y \in \ca{C}$ with a dual such that $X$ is a direct summand of $Y$ and $\rk(Y)$ is a natural number.
 \end{lemma}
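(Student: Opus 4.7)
My plan is to reduce the claim to a computation in the commutative $\mathbb{Q}$-algebra $\End(I)$ and then ``correct'' the rank of $X$ by adding summands of $I$ weighted by suitable idempotents.

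First, by Condition~(iii), pick $n \geq 0$ with $\Lambda^n X = 0$. Setting $r \defl \rk(X) \in \End(I)$, Lemma~\ref{lemma:trace_of_exterior_power} yields the relation
\[
r(r-1)(r-2)\cdots(r-n+1) = 0
\]
in $\End(I)$. Since $\mathbb{Q} \subseteq \End(I)$, the polynomial $f(t) = t(t-1)\cdots(t-n+1)$ has $n$ distinct rational roots, so by the Chinese Remainder Theorem $\mathbb{Q}[t]/\bigl(f(t)\bigr) \cong \mathbb{Q}^n$. Consequently the subring $\mathbb{Q}[r] \subseteq \End(I)$ is a quotient of $\mathbb{Q}^n$, and the primitive idempotents of $\mathbb{Q}^n$ map to pairwise orthogonal (possibly zero) idempotents $e_0, e_1, \ldots, e_{n-1} \in \End(I)$ with $\sum_i e_i = 1$ and
\[
r = \sum_{i=0}^{n-1} i \cdot e_i.
\]

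Next, each idempotent $e_i \colon I \rightarrow I$ splits in the abelian category $\ca{C}$, yielding a decomposition $I \cong I_0 \oplus I_1 \oplus \cdots \oplus I_{n-1}$ with $I_i$ the image of $e_i$. Since $I$ is its own dual, each summand $I_i$ has a dual as well. A short trace calculation (using cyclicity of the trace and the fact that the trace of an endomorphism of $I$ is that endomorphism itself) shows that $\rk(I_i) = e_i$.

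Finally, define
\[
Y \defl X \oplus \bigoplus_{i=0}^{n-1} I_i^{\oplus (n-1-i)}.
\]
Then $Y$ has a dual as a finite direct sum of objects with duals, and $X$ is by construction a direct summand. The computation
\[
\rk(Y) = r + \sum_{i=0}^{n-1}(n-1-i)\,\rk(I_i) = \sum_{i=0}^{n-1} i\,e_i + \sum_{i=0}^{n-1}(n-1-i)\,e_i = (n-1) \sum_{i=0}^{n-1} e_i = n-1
\]
shows that $\rk(Y) = n-1 \in \mathbb{N}$, completing the proof. The only nontrivial step is the extraction of the idempotent decomposition of $r$ from the vanishing of $f(r)$, which rests entirely on $\End(I)$ being a $\mathbb{Q}$-algebra and $f$ being separable over $\mathbb{Q}$; I do not anticipate any serious obstacle.
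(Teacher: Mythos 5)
Your proof is correct and follows essentially the same route as the paper: extract the polynomial relation on $\rk(X)$ from $\Lambda^n X = 0$, split the resulting idempotents of $\End(I)$ to decompose the unit object, and pad $X$ with copies of the summands $I_i$ to equalize the rank. The only cosmetic differences are that you work with $\mathbb{Q}[t]/(f(t))$ directly (allowing some idempotents to vanish) instead of $\mathbb{Q}[t]/\ker(\varphi)$, and you pad up to $n-1$ rather than to the maximum of the roots actually occurring; both variations are harmless.
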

 
 \begin{proof}
 By Condition~(iii) of Theorem~\ref{thm:intrinsic}, there exists a natural number $\ell$ such that $\Lambda^{\ell}(X)=0$. Without loss of generality we can assume $\ell >1$. From Lemma~\ref{lemma:trace_of_exterior_power} it follows that the equation
\begin{equation}\label{eqn:vanishing_polynomial}
 \rk(X) \bigr(\rk(X) - 1 \bigl) \bigr(\rk(X) - 2 \bigl) \cdots \bigr(\rk(X) - \ell + 1 \bigl)=0
\end{equation}
 holds in the ring of endomorphisms $\End(I)$ of the unit object. Since this ring can have zero divisors, we cannot conclude that the rank of $X$ is equal to a natural number. A guiding example to keep in mind is the case where $\ca{C}$ is a finite product of copies of the tensor category of $\mathbb{Q}$-vector spaces (respectively $R$-modules). In that case, the rank is given by a vector with natural number entries. We claim that this is the case for a general tensor category $\ca{C}$ as well.

 Consider the ring homomorphism
\[
 \varphi \colon \mathbb{Q}[t] \rightarrow \End(I)
\]
 which sends $t$ to $\rk(X)$ and let $A=\mathbb{Q}[t] \slash \ker(\varphi)$. Since the polynomial of Formula~\eqref{eqn:vanishing_polynomial} lies in the kernel of $\varphi$, it follows that this kernel is generated by a polynomial of the form $(t-r_1)\cdots (t-r_n)$, where all the $r_k$ are \emph{distinct} natural numbers. Thus $A$ is isomorphic to a finite product of copies of $\mathbb{Q}$, and under this isomorphism, the image of $t$ in $A$ is given by $(r_1, \ldots, r_n)$.

 The images $e_k \in \End(I)$ of the orthogonal idempotents in $A$ define a direct sum decomposition
\[
 I \cong \bigoplus_{k=1}^n I_k
\]
 of the unit object of $I$. By construction, the rank of $I_k$ is equal to the idempotent $e_k$ in $\End(I)$. Writing $X_k$ for $X \otimes I_k$ we have
 $\rk(X_k)=r_k e_k$. By adding a suitable number of copies of $I_k$, we get an object $Y_k$ which contains $X_k$ as a direct summand and such that $\rk(X_k)=r e_k$, where $r$ is the maximum of the $r_k$. The object
\[
 Y = \bigoplus_{k=1}^n Y_k
\]
 therefore has rank $r \in \mathbb{N}$, and it contains a copy of $X$ as a direct summand.
 \end{proof}

\begin{prop}\label{prop:generating_algebra}
 Suppose that $R$ is a $\mathbb{Q}$-algebra. Let $\ca{C}$ be an abelian tensor category satisfying Conditions~(i)-(iii) of Theorem~\ref{thm:intrinsic}. Then there exists a faithfully flat commutative algebra $A$ such that $A$ is a generator of the category $\ca{C}_A$ of $A$-modules.
\end{prop}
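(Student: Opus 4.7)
The plan is to iterate Proposition~\ref{prop:unit_direct_summand} to trivialize one object with a dual at a time, then assemble all these algebras via a filtered colimit of finite tensor products, relying on Lemma~\ref{lemma:faithfully_flat_stable_under_filtered_colimits} to preserve faithful flatness. The key technical move is to first use Lemma~\ref{lemma:integral_rank} to reduce to the case of objects whose rank is a genuine natural number, so that Proposition~\ref{prop:unit_direct_summand} applies at each stage of the iteration.

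Fix an object $X$ with a dual. By Lemma~\ref{lemma:integral_rank}, $X$ is a direct summand of some $Y$ with a dual and with $r \defl \rk(Y) \in \mathbb{N}$. If $r=0$, Condition~(ii) forces $Y=0$ and hence $X=0$, so there is nothing to do. Otherwise, I would construct inductively a sequence $I = A_0 \to A_1 \to \dotsb \to A_r$ of faithfully flat commutative algebras together with $A_i$-modules $M_i$ which are direct summands of $A_i \otimes Y$ and have $\rk(M_i)=r-i$. The inductive step is Proposition~\ref{prop:unit_direct_summand} applied to $M_i$ (valid since $r-i>0$ for $i<r$): it produces $A_{i+1}$ and a splitting $A_{i+1} \otimes_{A_i} M_i \cong A_{i+1} \oplus M_{i+1}$, so that $A_{i+1} \otimes Y \cong A_{i+1}^{\oplus(i+1)} \oplus M_{i+1}$. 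At the final stage $M_r$ has rank zero, hence vanishes by Condition~(ii), which gives $A_r \otimes Y \cong A_r^{\oplus r}$ in $\ca{C}_{A_r}$. Faithful flatness of the composite $I \to A_r$ follows because faithful flatness of algebras is stable under composition.

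Now choose a set of representatives $\{X_\lambda\}_{\lambda \in \Lambda}$ for the isomorphism classes of objects with duals in $\ca{C}$, let $A_\lambda$ be the algebra constructed above for $X_\lambda$, and define $A \defl \colim_S \bigotimes_{\lambda \in S} A_\lambda$ as $S$ ranges over the filtered poset of finite subsets of $\Lambda$. By Lemma~\ref{lemma:faithfully_flat_stable_under_filtered_colimits}, $A$ is faithfully flat. For each $\lambda$ the canonical map $A_\lambda \to A$ is a map of commutative algebras, so $A \otimes Y_\lambda \cong A \otimes_{A_\lambda} (A_\lambda \otimes Y_\lambda) \cong A^{\oplus r_\lambda}$ as $A$-modules, and $A \otimes X_\lambda$ is therefore a direct summand of a finite free $A$-module. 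Finally, since the forgetful functor $U \colon \ca{C}_A \to \ca{C}$ is faithful and right adjoint to $A \otimes -$, a standard argument shows that the image of a generating set is a generating set; thus $\{A \otimes X_\lambda\}_\lambda$ generates $\ca{C}_A$, and since each $A \otimes X_\lambda$ is a quotient of a free $A$-module, $A$ itself is a generator of $\ca{C}_A$.

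The only real obstacle in this proof is Proposition~\ref{prop:unit_direct_summand} (and the auxiliary Lemma~\ref{lemma:integral_rank}); once these are in place the argument above is essentially a bookkeeping exercise, with the one substantive point being the use of Condition~(ii) to kill the residual summand of rank zero at the last stage of the iteration.
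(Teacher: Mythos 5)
Your overall strategy---iterate Proposition~\ref{prop:unit_direct_summand} to trivialize a generating family of objects of integral rank (via Lemma~\ref{lemma:integral_rank}), assemble the resulting algebras by a filtered colimit of finite tensor products, and conclude with Lemma~\ref{lemma:faithfully_flat_stable_under_filtered_colimits}---is exactly the paper's. But there is a genuine gap at the one step you dismiss as bookkeeping: killing the residual summand $M_r$ of rank zero. You invoke Condition~(ii) of Theorem~\ref{thm:intrinsic}, but that condition is a hypothesis about objects of $\ca{C}$, whereas $M_r$ is an object of $\ca{C}_{A_r}$: it has a dual only as an $A_r$-module, and its rank is an element of $\End_{\ca{C}_{A_r}}(A_r)\cong\ca{C}(I,A_r)$, not of $\End(I)$. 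Since $M_r$ is merely a direct summand of $A_r\otimes Y$ and need not be extended from any dualizable object of $\ca{C}$, Condition~(ii) simply does not apply to it. (That Condition~(ii) passes to module categories $\ca{C}_A$ is only known a posteriori, once $\ca{C}$ is proved geometric---which is what is being established.)

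The paper closes this gap with Deligne's exterior-power argument, and this is the one substantive idea in the proof beyond Proposition~\ref{prop:unit_direct_summand}. From $A_r\otimes Y\cong A_r^{\oplus r}\oplus M_r$ and the fact that $\Lambda_{A_r}(-)$ is a free graded-commutative algebra functor (Remark~\ref{rmk:free_algebras}) one gets
\[
 \Lambda^{r+1}_{A_r}\bigl(A_r^{\oplus r}\oplus M_r\bigr)\cong\bigoplus_{i+j=r+1}\Lambda^i_{A_r}\bigl(A_r^{\oplus r}\bigr)\ten{A_r}\Lambda^j_{A_r}(M_r)\smash{\rlap{,}}
\]
and the summand with $i=r$, $j=1$ exhibits $M_r$ as a direct summand of $\Lambda^{r+1}_{A_r}(A_r\otimes Y)\cong A_r\otimes\Lambda^{r+1}(Y)$. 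Now $\Lambda^{r+1}(Y)$ lives in $\ca{C}$, has a dual there (being a summand of $Y^{\otimes(r+1)}$), and has rank zero by Lemma~\ref{lemma:trace_of_exterior_power} since $\rk(Y)=r$; so Condition~(ii), applied where it is actually assumed to hold, gives $\Lambda^{r+1}(Y)=0$ and hence $M_r=0$. With this step inserted, the remainder of your argument (stability of faithful flatness under composition and filtered colimits, and the standard fact that a left adjoint with faithful right adjoint carries a generating set to a generating set) is correct and agrees with the paper.
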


\begin{proof}
 We first claim that for each object $X \in \ca{C}$ with dual such that $n=\rk(X)$ is a positive integer, there exists a faithfully flat algebra $A$ with the property that $A\otimes X \cong A^{\oplus n}$ as $A$-modules. 

 Applying Proposition~\ref{prop:unit_direct_summand} $n$ times, we find that there exists a faithfully flat algebra $A$ and an $A$-module $N$ such that $A \otimes X \cong A^{\oplus n} \oplus N$, so it suffices to show that $N=0$. We can use Deligne's argument from \cite[\S 7]{DELIGNE}. Namely, using the fact that $\Lambda_A(-)$ is the free graded commutative algebra functor (see Remark~\ref{rmk:free_algebras}), we find that there is an isomorphism
\[
 \Lambda_A^{n+1}(A^{\oplus n} \oplus N) \cong \bigoplus_{i+j=n+1} \Lambda_A^i(A^{\oplus n}) \otimes_A \Lambda^j_A (N)
\]
 of $A$-modules. Since $\Lambda^n_A(A^{\oplus n})=A$, this shows that $\Lambda^1(N)=N$ is a direct summand of $\Lambda^{n+1}_A(A^{\oplus n} \oplus N)$, so it suffices to show that
\[
 \Lambda^{n+1}_A(A^{\oplus n} \oplus N) \cong  \Lambda^{n+1}_A(A \otimes X) \cong A \otimes \bigr(\Lambda^{n+1}(X) \bigl) 
\]
 is equal to zero. This follows from the fact that the rank of $\Lambda^{n+1}(X)$ is zero (see Formula~\eqref{eqn:trace_of_exterior_power}) and Condition~(ii) of Theorem~\ref{thm:intrinsic}. This concludes the proof of the first claim.

 By assumption, there is a generating set of $\ca{C}$ consisting of objects with duals (Condition~(i) of Theorem~\ref{thm:intrinsic}). By Lemma~\ref{lemma:integral_rank}, there exists a generating set of objects which also have integral rank. By taking a filtered colimit of finite tensor products, we obtain a commutative algebra $A$ with the property that $A \otimes X \cong A^{\oplus n}$ for each object $X$ of a generating set (with rank $n$ depending on $X$). This algebra is faithfully flat by Lemma~\ref{lemma:faithfully_flat_stable_under_filtered_colimits}, and $A$ is a generator of $\ca{C}_A$ since every $A$-module can be written as a quotient of  a free module.
\end{proof}

 \begin{proof}[Proof of Theorem~\ref{thm:intrinsic}.]
 Let $\ca{C}$ be a tensor categories satisfying Conditions~(i)-(iv) of Theorem~\ref{thm:intrinsic}. By Proposition~\ref{prop:projective_algebra}, there exists a faithfully flat algebra $A \in \ca{C}$ which is projective as an $A$-module, and by Proposition~\ref{prop:generating_algebra} there exists a faithfully flat algebra $A^{\prime}$ which is a generator of the category of $A^{\prime}$-modules. 

 Note that if the unit of a tensor category $\ca{D}$ is projective (or a generator), the same is true for all tensor categories of modules over commutative algebras in $\ca{D}$. Applying this reasoning to $\ca{D}=\ca{C}_A$ and $\ca{D}=\ca{C}_{A^{\prime}}$, we find that $A \otimes A^{\prime}$ is a projective generator of $\ca{C}_{A \otimes A^{\prime}}$. Thus $A \otimes A^{\prime}$ is a faithfully flat affine algebra (see Definition~\ref{def:affine_algebra}). This concludes the proof that $\ca{C}$ is geometric, with faithfully flat covering given by $\ca{C} \rightarrow \ca{C}_{A\otimes A^{\prime}}$ (see Corollary~\ref{cor:faithfully_flat_coverings_vs_algebras}).
 \end{proof}

\subsection{Constructing an algebra which is projective as a module}\label{section:splitting_epis_proof}
 In this section we prove that Deligne's construction of an algebra satisfying the conclusions of Proposition~\ref{prop:projective_algebra} still works under the weaker assumptions of Theorem~\ref{thm:intrinsic}. Deligne's proof that the resulting algebra is faithfully flat has to be modified, though. We will need the following lemmas.
 
 \begin{lemma}\label{lemma:pushout_of_algebras}
  Let $\ca{C}$ be a tensor category, $B \in \ca{C}$ a commutative algebra, and let $f \colon  I \rightarrow B$ be a morphism in $\ca{C}$. Let $\hat{f} \colon B \rightarrow B$ be the corresponding morphism of $B$-modules under the free $B$-module adjunction. Let 
  \[
  \xymatrix{\Sym(I) \ar[r] \ar[d] & B \ar[d]^{p}\\  I \ar[r] & I \ten{\Sym(I)} B}
  \]
 be the pushout in the category of commutative algebras in $\ca{C}$, where the two morphisms with domain $\Sym(I)$ correspond to $f$ and $\id_I$ under the adjunction of Remark~\ref{rmk:free_algebras}. Then the morphism of $B$-modules underlying $p$ is isomorphic to the cokernel of the morphism
  \[
 \hat{f}-\id_B \colon B \rightarrow B
  \]
 in $\ca{C}_B$.
 \end{lemma}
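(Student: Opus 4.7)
My plan is to identify the pushout $I \ten{\Sym(I)} B$ with the cokernel $Q$ of $\hat f - \id_B$ by showing that $Q$, equipped with its canonical commutative algebra structure, has the universal property of the pushout.

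First I would explain why the cokernel $q \colon B \to Q$ in $\ca C_B$ carries a unique algebra structure making $q$ an algebra morphism. The image of $\hat f - \id_B$ is a $B$-submodule of $B$, and in the commutative algebra $B$ every $B$-submodule is automatically an ideal: closure under multiplication on one side comes from $B$-linearity, and closure on the other follows from commutativity of $\mu$. The cokernel $Q$ is therefore a commutative algebra quotient of $B$, uniquely so.

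Next I would verify the cocone condition. The key identity is $\hat f \circ \eta_B = f$, which is precisely how $\hat f$ is extracted from $f$ under the adjunction $B \otimes - \dashv U$ between $\ca C_B$ and $\ca C$. Combined with the cokernel relation $q \circ (\hat f - \id_B) = 0$, precomposition with $\eta_B$ yields $q \circ f = q \circ \eta_B$. Since $q \circ \eta_B$ is the unit of the algebra $Q$, this is exactly the cocone condition on the span $I \leftarrow \Sym(I) \to B$ in the category of commutative algebras.

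For the universal property, I would translate both sides into the same condition on an algebra map $g \colon B \to C$. Via the $\Sym \dashv U$ adjunction, a cocone into a commutative algebra $C$ unwinds to the data of an algebra map $g \colon B \to C$ such that the two morphisms $g \circ f$ and $g \circ \eta_B = \eta_C$ from $I$ to $C$ agree (the component $I \to C$ is forced to be $\eta_C$). On the other hand, algebra maps $Q \to C$ correspond to algebra maps $g \colon B \to C$ satisfying $g \circ (\hat f - \id_B) = 0$ in $\ca C_B$; by the $B \otimes - \dashv U$ adjunction this vanishing is equivalent to $g \circ (f - \eta_B) = 0$, i.e., $g \circ f = \eta_C$. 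Both universal properties thus reduce to the same condition, and the identification sends $q$ to $p$ as required.

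The main obstacle is really just keeping the two adjunctions $\Sym \dashv U$ and $B \otimes - \dashv U$ straight and verifying that the condition $g \circ f = \eta_C$ appears on both sides. Once the compatibility $\hat f \circ \eta_B = f$ is recognized as the bridge between the two, the rest is formal manipulation with universal properties.
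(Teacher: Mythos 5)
Your argument is correct, but it proceeds differently from the paper. The paper computes the underlying object of the pushout directly: it writes $I \ten{\Sym(I)} B$ as the coequalizer of $I \otimes \Sym(I) \otimes B \rightrightarrows I \otimes B$, uses that $\Sym(I)$ is a countable direct sum of copies of $I$ to split the coequalizing condition into one condition per degree (namely that $\hat{f}^{\ten{B} n}$ and $\id_B$ are coequalized for every $n$), and then invokes the Eckmann--Hilton argument to identify $\hat{f}^{\ten{B} n}$ with the $n$-fold composite $\hat{f}\circ\cdots\circ\hat{f}$, so that the condition for $n=1$ implies all the others. You instead stay in the category of commutative algebras and show that the cokernel $Q$ represents the same functor as the pushout; the reduction to a single degree is then automatic, since an algebra map out of $\Sym(I)$ is determined by its adjunct $I \to C$. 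What your route buys is a cleaner universal-property argument with no explicit coequalizer computation; what it costs is the extra step of equipping $Q$ with an algebra structure. On that step, be a little careful: the lemma is stated for a general tensor category $\ca{C}$, which is not assumed abelian, so ``the image of $\hat{f}-\id_B$ is a $B$-submodule, hence an ideal'' is not literally available. This is easily repaired without images: since $q \colon B \to Q$ is an epimorphism in $\ca{C}_B$ and $-\ten{B}-$ preserves colimits in each variable, $q \ten{B} q$ is an epimorphism, $q \circ (\hat{f}-\id_B) = 0$ forces $Q \ten{B} (\hat{f}-\id_B) = 0$, and the multiplication of $B$ (which is the identity of $B \ten{B} B \cong B$) descends uniquely to $Q \ten{B} Q \to Q$, with the algebra axioms inherited along the epimorphisms $q^{\ten{B} n}$. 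With that adjustment your identification of the two universal properties via the condition $g \circ f = \eta_C$ is exactly right and yields the claim.
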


\begin{proof}
 Pushouts of commutative algebras are given by tensor products, and $I$ is the initial commutative algebra. Thus $p$ is given by the coequalizer of the diagram
 \[
 \xymatrix{I \otimes \Sym(I) \otimes B \ar@<0.5ex>[r] \ar@<-0.5ex>[r] & I \otimes B}
 \]
 in $\ca{C}$. Using the fact that $\Sym(I)$ is a countable direct sum of copies of $I$ and that the adjunction $B \otimes - \colon \ca{C} \rightleftarrows \ca{C}_B \colon U $ is monoidal we find that a morphism coequalizes the above diagram if and only if it coequalizes the diagram
 \[
 \xymatrix{B \ar@<0.5ex>[r]^{\hat{f}^{\otimes_B n}} \ar@<-0.5ex>[r]_{\id_B} & B}
 \]
 for all $n \in \mathbb{N}$. Since tensor products of endomorphisms of the unit object $B \in \ca{C}_B$ coincide with compositions (by the Eckmann-Hilton argument), we have $\hat{f}^{\otimes_B n} = \hat{f} \circ \ldots \circ \hat{f}$. Thus a morphism simultaneously coequalizes the above diagrams for all $n$ if and only if it coequalizes the diagram for $n=1$.
\end{proof}

We will apply this lemma in the case where the target is equal to a graded algebra. Then the computation of the pushout can be further simplified.

\begin{lemma}\label{lemma:graded_pushout}
 Suppose that in the situation of Lemma~\ref{lemma:pushout_of_algebras}, the algebra
 \[
 B = \bigoplus_{n \in \mathbb{N}} B_n
 \]
  is $\mathbb{N}$-graded\footnote{Here we are using the symmetry on graded objects that does \emph{not} introduce signs.}, and that $f \colon I \rightarrow B$ factors through one of the $B_n$. Then the underlying object of the algebra $I \ten{\Sym(I)} B$ is given by the direct sum
  \[
 \bigoplus_{i=0}^{n-1}  \colim_{k\in \mathbb{N}} B_{i+kn} \smash{\rlap{,}}
  \]
  where the morphisms $B_{i+kn} \rightarrow B_{i+(k+1)n}$ are given by the composite
\begin{equation}\label{eqn:connecting_morphism}
 \xymatrix{B_{i+kn} \ar[r]^-{\cong} & I \otimes B_{i+kn} \ar[r]^-{f\otimes \id} & B_n \otimes B_{i+kn} \ar[r]^-{\mu} & B_{i+(k+1)n}}
\end{equation} 
  in $\ca{C}$. In particular, if $B$ is flat, then so is $I \ten{\Sym(I)} B$.
\end{lemma}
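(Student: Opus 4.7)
The plan is to apply Lemma~\ref{lemma:pushout_of_algebras} to rewrite $I \ten{\Sym(I)} B$ as a cokernel, and then exploit the grading to split this cokernel into pieces, each of which is a sequential colimit. By Lemma~\ref{lemma:pushout_of_algebras}, the underlying $B$-module of $I \ten{\Sym(I)} B$ is the cokernel of $\hat{f} - \id_B \colon B \to B$ in $\ca{C}_B$. Since $\ca{C}$ is closed, tensoring with $B$ preserves all colimits, and consequently the forgetful functor $U \colon \ca{C}_B \to \ca{C}$ creates (hence preserves) colimits; so the cokernel may be computed directly in $\ca{C}$.

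Unwinding the adjunction, $\hat{f}$ is the composite $B \cong B \otimes I \xrightarrow{B \otimes f} B \otimes B \xrightarrow{\mu} B$, i.e.\ multiplication by $f$. Because $f$ factors through $B_n$, the restriction of $\hat{f}$ to $B_m$ lands in $B_{m+n}$ and, at $m = i+kn$, coincides with the morphism displayed in~\eqref{eqn:connecting_morphism}. Grouping summands by residue modulo $n$ gives a decomposition $B \cong \bigoplus_{i=0}^{n-1} C_i$ with $C_i \defl \bigoplus_{k \geq 0} B_{i+kn}$ which is preserved by $\hat{f}$ and (trivially) by $\id_B$; therefore the cokernel splits as the direct sum over $i$ of the cokernels of $\hat{f}|_{C_i} - \id_{C_i}$.

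Each of these summands is a sequential colimit, by the standard presentation in any cocomplete category of $\colim_k D_k$ (for a countable chain $D_0 \to D_1 \to \cdots$) as the cokernel of $\mathrm{shift} - \id$ on $\bigoplus_k D_k$. Applied to the chain $B_i \to B_{i+n} \to \cdots$ with connecting maps~\eqref{eqn:connecting_morphism}, this yields the asserted formula. For the flatness statement, if $B$ is flat then each $B_m$ is flat as a retract of $B$; in the locally finitely presentable abelian category $\ca{C}$, direct sums and filtered colimits of flat objects remain flat, since both operations preserve exactness.

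No step should present a serious obstacle: the argument is formal once the pushout has been reduced to a cokernel. The only checks of substance are that $\hat{f}$ coincides with the displayed connecting morphism and that the residue-class decomposition is stable under $\hat{f}$ — both immediate from the fact that $f$ is homogeneous of degree $n$.
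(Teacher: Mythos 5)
Your proposal is correct and follows essentially the same route as the paper: reduce via Lemma~\ref{lemma:pushout_of_algebras} to the cokernel of $\hat{f}-\id_B$, observe that $\hat{f}$ restricted to $B_{i+kn}$ is the connecting morphism~\eqref{eqn:connecting_morphism} so the cokernel splits over residue classes mod $n$ into telescopes computing the sequential colimits, and deduce flatness from closure of flat objects under retracts, finite direct sums, and filtered colimits. The paper phrases the colimit identification by checking the universal property of the coequalizer directly rather than citing the telescope presentation, but this is the same argument.
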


\begin{proof}
 With the notation of Lemma~\ref{lemma:pushout_of_algebras}, the restriction of the morphism $\hat{f}$ to $B_{i+kn}$ is given by Formula~\eqref{eqn:connecting_morphism}. A morphism $g \colon B \rightarrow C$ (which amounts to a collection of morphisms $g_j \colon B_j \rightarrow C$) coequalizes $\hat{f}$ and $\id_B$ if and only if the triangle
 \[
 \xymatrix{B_{i+kn} \ar[rd]_{g_{i+kn}} \ar[rr] && B_{i+(k+1)n} \ar[ld]^{g_{i+(k+1)n}}\\ & C}
 \]
 is commutative for all $k$ and all $i=0, \ldots, n-1$. 
 
 Flatness is immediate from the fact that finite direct sums and filtered colimits of flat objects in $\ca{C}$ are flat.
\end{proof}

 Using these facts we can show that Proposition~\ref{prop:splitting_epis} is true under our weakened assumptions.
 
\begin{proof}[Proof of Proposition~\ref{prop:splitting_epis}.]
 Let $p \colon X \rightarrow I$ be an epimorphism whose domain $X$ has a dual. We need to construct a faithfully flat algebra $A$ such that $A \otimes p$ is split as a morphism of $A$-modules.
 
 Let $p^{\vee} \colon I \rightarrow X^{\vee}$ be the dual morphism of $p$. Deligne showed that --- in the case where $\ca{C}_{\fp}$ is a rigid abelian category --- the pushout $A=I \ten{\Sym(I)} \Sym(X^{\vee})$ of the diagram
 \[
 \xymatrix{ \Sym(I) \ar[d] \ar[r]^{\Sym(p^{\vee})} & \Sym(X^{\vee}) \ar[d] \\ 
 I \ar[r] & I \ten{\Sym(I)} \Sym(X^{\vee})}
 \]
 has the desired properties. We claim that this is still true under our weakened hypotheses.
 
 First note that $A \otimes p$ is split as a morphism of $A$-modules: by construction, the diagram
 \[
 \xymatrix{ X^{\vee} \ar[r] &  \Sym(X^{\vee}) \ar[d] \\ I \ar[u]^{p^{\vee}} \ar[r]_-{\eta} & A}
 \]
 in $\ca{C}$ is commutative. Applying the free $A$-module functor to the left hand side, we get a commutative triangle
 \[
 \xymatrix{& A \otimes X^{\vee} \ar[rd] \\ A\otimes I \ar[rr]_-{\cong} \ar[ru]^{A \otimes p^{\vee}} && A }
 \]
 in the category $\ca{C}_A$ of $A$-modules. This shows that the dual $A \otimes p^{\vee}$ of $A\otimes p$ has a retraction, and we obtain the desired section of $A\otimes p$ by taking the dual of this retraction.
 
 It remains to check that the algebra $A$ is faithfully flat. We will check that the conditions of Lemma~\ref{lemma:faithfully_flat_condition} are satisfied, that is, that the sequence
 \[
 \xymatrix{0 \ar[r] & I \ar[r]^{\eta} & A \ar[r] & A \slash I \ar[r] & 0}
 \]
 is exact and that $A \slash I$ is flat. 
 
 From Lemma~\ref{lemma:graded_pushout} and Formula~\eqref{eqn:connecting_morphism} it follows that $A$ is flat and that the unit of $A$ is given by the filtered colimit of the diagram
 \[
\xymatrix{ I \ar[r] \ar[d] & I \ar[r] \ar[d] & I \ar[r] \ar[d] & \ldots \ar[d] \\ I \ar[r] & \Sym^1(X^{\vee}) \ar[r] & \Sym^2(X^{\vee}) \ar[r] & \ldots }
 \]
 where the vertical morphisms $I \rightarrow \Sym^{n}(X^{\vee})$ are given by the $n$-fold product of $p^{\vee}$. Since monomorphisms and flat objects are stable under filtered colimits, it suffices to show that each vertical morphism in the above diagram is monic and has a flat cokernel.
 
 Note that $(p^{\vee})^{\otimes n} \colon I \rightarrow (X^{\vee})^{\otimes n}$ commutes with the idempotent $s \slash n!$ on the codomain whose splitting is $\Sym^n(X^{\vee})$. This reduces the problem to showing that $(p^{\vee})^{\otimes n}$ is a monic with flat cokernel. Recall that Condition~(iv) of Theorem~\ref{thm:intrinsic} states that $p^{\vee}$ is monic with dualizable (hence flat) cokernel. Since tensoring with $X^{\vee}$ preserves monomorphisms and flat objects, we find that the successive quotients in the sequence
 \[
 \xymatrix@!C=50pt{I \ar[r]^{p^{\vee}} & X^{\vee} \ar[r]^-{p^{\vee} \otimes X^{\vee}} & (X^{\vee})^{\otimes 2} \ar[r]^-{p^{\vee} \otimes (X^{\vee})^{\otimes 2}} & \ldots \ar[r]^-{p^{\vee} \otimes (X^{\vee})^{\otimes n-1}} & (X^{\vee})^{\otimes n}}
 \]
 of monomorphisms are flat. The conclusion therefore follows from Lemma~\ref{lemma:flatness_from_filtration} below.
 \end{proof}

 \begin{lemma}\label{lemma:flatness_from_filtration}
  Let $\ca{C}$ be a pre-geometric tensor category. Then flat objects are stable under extensions. Moreover, if a monomorphism $X \rightarrow Y$ can be written as a composite
  \[
 \xymatrix{ X=F_0 \ar[r] & F_1 \ar[r] & \ldots \ar[r] & F_n=Y}
  \]
 of monomorphisms with the property that the successive quotients $F_i \slash F_{i-1}$ are flat, then $Y \slash X$ is flat.
 \end{lemma}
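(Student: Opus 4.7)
The plan is to deduce the filtration statement from the extension case by a direct induction on the filtration length, so the substantive work lies in proving that flat objects are closed under extensions in a pre-geometric category $\ca{C}$.

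For the extension case, I would start with a short exact sequence
\[
\xymatrix{0 \ar[r] & A \ar[r] & B \ar[r] & C \ar[r] & 0}
\]
in $\ca{C}$ with $A$ and $C$ flat, and fix an arbitrary monomorphism $i \colon M^{\prime} \rightarrow M$. Tensoring with $M^{\prime}$ and $M$ produces the commutative diagram
\[
\xymatrix{0 \ar[r] & A\otimes M^{\prime} \ar[r] \ar[d] & B \otimes M^{\prime} \ar[r] \ar[d] & C \otimes M^{\prime} \ar[r] \ar[d] & 0 \\ 0 \ar[r] & A \otimes M \ar[r] & B \otimes M \ar[r] & C \otimes M \ar[r] & 0}
\]
whose rows are short exact: right-exactness of the tensor product is automatic, and exactness on the left follows from the flatness of $C$ via the Tor long exact sequence, which is available because $\ca{C}$ is tame in the sense of Lurie by Remark~\ref{rmk:flat_resolutions_hence_tame}. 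The outer vertical arrows are monic by the flatness of $A$ and $C$, and the snake lemma applied to this diagram forces the middle vertical arrow $B \otimes i$ to be monic as well. Hence $B$ is flat.

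For the filtration statement I would induct on the length $n$. The case $n=1$ is immediate. For the inductive step, the filtration $F_0 \subseteq F_1 \subseteq \ldots \subseteq F_{n-1}$ has length $n-1$ and inherits the same hypotheses, so $F_{n-1}/F_0$ is flat by the inductive hypothesis. Applying the extension result to the short exact sequence
\[
\xymatrix{0 \ar[r] & F_{n-1}/F_0 \ar[r] & F_n/F_0 \ar[r] & F_n/F_{n-1} \ar[r] & 0}
\]
whose outer terms are both flat (the right-hand one by assumption, the left-hand one by induction) then yields that $Y/X = F_n/F_0$ is flat.

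The main obstacle I anticipate is the appeal to the Tor long exact sequence in a category that is only known to be abelian and pre-geometric. This is precisely what Remark~\ref{rmk:flat_resolutions_hence_tame} is designed to license: the existence of enough flat resolutions built from objects with duals ensures that the standard derived-functor machinery, and in particular the Tor long exact sequence used to guarantee exactness of the tensored rows, carries over without change.
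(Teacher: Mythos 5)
Your proposal is correct and follows essentially the same route as the paper: establish that a short exact sequence with flat cokernel stays exact after tensoring, apply the snake lemma to conclude closure of flat objects under extensions, and then induct along the filtration (the paper runs the induction downward, you run it upward on the length, which is an immaterial difference). The only presentational difference is that where you invoke the Tor long exact sequence as licensed by Remark~\ref{rmk:flat_resolutions_hence_tame}, the paper also unfolds that appeal explicitly, tensoring with a resolution by direct sums of objects with duals and using the long exact sequence in homology.
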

 
 \begin{proof}
 Recall that there are enough flat objects in a pre-geometric category to set up a theory of Tor-functors (see Remark~\ref{rmk:flat_resolutions_hence_tame}). Thus flatness is equivalent to the vanishing of Tor-functors, and the first claim follows from the long exact sequence for Tor-functors. More explicitly, we can argue as follows. We first claim that a short exact sequence
\[
 \xymatrix{0 \ar[r] & A \ar[r] & B \ar[r] & C \ar[r] & 0 }
\] 
 with $C$ flat is preserved by tensoring with any other object $M$ in $\ca{C}$. To see this, let
 \[
 \ldots \rightarrow M_i \rightarrow \ldots \rightarrow M_0 \rightarrow M
 \]
 be a resolution of $M$ such that each $M_i$ is a (possibly infinite) direct sum of objects with duals. Since $\ca{C}$ is Grothendieck abelian, we get an exact sequence
 \[
 \xymatrix{0 \ar[r] & A \otimes M_\ast \ar[r] & B \otimes M_\ast \ar[r] & C \otimes M_\ast \ar[r]  & 0 }
 \]
 of chain complexes in $\ca{C}$. The resulting long exact sequence in homology and the exactness of $C \otimes M_\ast$ above degree zero imply that
 \[
 \xymatrix{0 \ar[r] & A \otimes M \ar[r] & B \otimes M \ar[r] & C \otimes M \ar[r]  & 0 }
 \]
 is an exact sequence in $\ca{C}$.
 
 Now let $i \colon M \rightarrow N$ be a monomorphism, and assume that both $A$ and $C$ are flat. The snake lemma applied to the diagram
 \[
 \xymatrix{0 \ar[r] & A \otimes M \ar[d] \ar[r] & B \otimes M \ar[r] \ar[d]  & C \otimes M \ar[d] \ar[r]  & 0 \\
0 \ar[r] & A \otimes N \ar[r] & B \otimes N \ar[r] & C \otimes N \ar[r]  & 0    }
 \]
 implies that $B$ is flat as well.
 
 The second claim of the lemma now follows by downward induction. The object $F_n\slash F_{n-1}$ is flat by assumption, and if $F_n \slash F_{n-i}$ is flat, then so is $F_n \slash F_{n-i-1}$. Indeed, the sequence
 \[
 \xymatrix{0 \ar[r] & F_{n-i} \slash F_{n-i-1} \ar[r] & F_n \slash F_{n-i-1} \ar[r] & F_n \slash F_{n-i} \ar[r] & 0}
 \]
 is exact, $F_{n-i} \slash F_{n-i-1}$ is flat by assumption, and we just argued that flat objects in $\ca{C}$ are closed under extensions.
 \end{proof}

\subsection{Constructing an algebra which is a generator}\label{section:generator}

In order to prove Proposition~\ref{prop:unit_direct_summand}, we again use Deligne's construction (see \cite[Lemme~7.15]{DELIGNE}). In this case, both the construction and the proof Deligne gives work at the required level of generality. To keep the account self contained, we give an outline of the proof here.

\begin{proof}[Proof of Proposition~\ref{prop:unit_direct_summand}.]
Let $X \in \ca{C}$ be an object with a dual, and let $M$ be a direct summand of $A\otimes X$ in the category $\ca{C}_A$ of $A$-modules whose rank $\rk(M)$ is a natural number greater than zero. Our goal is to construct a faithfully flat commutative $A$-algbera $B$ such that $B$ is a direct summand of $B\otimes_A M$.

 To give such a direct summand amounts to giving a pair of morphisms
 \[
 \xymatrix{B \ar[r]^-{u} & B\otimes_A M \ar[r]^-{v} & B}
 \]
 of $B$-modules whose composite is the identity. 
 
 As in the proof of Proposition~\ref{prop:projective_algebra}, we claim that the universal algebra for which such a pair of morphisms exist is faithfully flat. First note that the universal algebra in question is given by the pushout $B=A \ten{\Sym_A(A)} \Sym_A(M \oplus M^{\vee})$ of the diagram
 \[
 \xymatrix{\Sym_A(A) \ar[r] \ar[d] & \Sym_A(M \oplus M^{\vee}) \ar[d] \\ A \ar[r] & A \ten{\Sym_A(A)} \Sym_A(M \oplus M^{\vee})}
 \]
 in the category of commutative $A$-algebras, where the unlabeled arrows correspond to $\id_A$ and the morphism
 \[
\xymatrix{  A \ar[r]^-{\coev} & M^{\vee} \ten{A} M  \ar[r]^-{s_{M^{\vee},M}} & M \ten{A} M^{\vee}\; \ar@{>->}[r] & \Sym_A(M) \ten{A} \Sym_A(M^{\vee}) }
 \]
 of degree two under the free commutative algebra adjunction. Indeed, to give a morphism $B \rightarrow C$ of commutative algebras amounts to giving morphisms $v \colon M \rightarrow C$ and $\overline{u} \colon M^{\vee} \rightarrow C$ such that the composite
 \[
 \xymatrix{A \ar[r]^-{\coev} & M^{\vee} \ten{A} M \ar[r]^-{\overline{u} \ten{A} v} & C \ten{A} C \ar[r]^-{\mu} & C}
 \]
 is equal to the unit $\eta \colon A \rightarrow C$. After applying the free $C$-module functor and taking the dual morphism of $\overline{u}$ we get the desired pair of morphisms that exhibit $C$ as direct summand of $C\otimes_A M$. We can in particular apply this reasoning to the identity on $B$.
 
 It remains to show that $B$ is a faithfully flat algebra. From Lemma~\ref{lemma:graded_pushout} (applied to the tensor category $\ca{C}_A$) we know that $B$ is a flat algebra. In \cite[Lemme~7.16]{DELIGNE}, Deligne showed that the unit $A \rightarrow B$ is split as a morphism of $A$-modules. His construction of a retraction still works under our weakened assumptions. Since a flat algebra with split unit is faithfully flat, this concludes the proof.

 To convince the reader that the construction indeed works under our weakened assumptions, we give a detailed account of Deligne's construction in Lemma~\ref{lemma:unit_has_section} below. 
\end{proof}

\begin{lemma}\label{lemma:unit_has_section}
 The unit $A \rightarrow A \ten{\Sym_A(A)} \Sym_A(M \oplus M^{\vee})$ is split as a morphism of $A$-modules.
\end{lemma}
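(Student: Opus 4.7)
The plan is to apply Lemma~\ref{lemma:graded_pushout} in the tensor category $\ca{C}_A$ to give an explicit description of $B = A \ten{\Sym_A(A)} \Sym_A(M \oplus M^{\vee})$ as an $A$-module, and then construct a retraction on each graded piece using normalized duality pairings.

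The coevaluation $\coev\colon A \to M \ten{A} M^{\vee} \hookrightarrow \Sym_A^2(M \oplus M^{\vee})$ factors through the degree-$2$ component of $\Sym_A(M \oplus M^{\vee})$, so Lemma~\ref{lemma:graded_pushout} presents $B$ as
\[
B \;\cong\; \colim_{k \geq 0} \Sym_A^{2k}(M \oplus M^{\vee}) \;\oplus\; \colim_{k \geq 0} \Sym_A^{2k+1}(M \oplus M^{\vee})
\]
with transition maps given by multiplication by $\coev$; the unit $A \to B$ is the inclusion of $\Sym_A^0 = A$ into the first summand at stage $k=0$. Using $\Sym_A(M \oplus M^{\vee}) \cong \Sym_A(M) \ten{A} \Sym_A(M^{\vee})$, I would define the retraction to vanish on the odd summand and on the subsummands $\Sym_A^p(M) \ten{A} \Sym_A^q(M^{\vee})$ with $p \neq q$, and to equal a normalized multiple
\[
r_k \;\defl\; \frac{1}{n(n+1) \cdots (n+k-1)} \, \pi_k
\]
of the canonical pairing $\pi_k\colon \Sym_A^k(M) \ten{A} \Sym_A^k(M^{\vee}) \to A$ (arising from the self-duality $\Sym_A^k(M^{\vee}) \cong (\Sym_A^k M)^{\vee}$) on the $p=q=k$ subsummand, where $n = \rk(M)$. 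Invertibility of the scalars $n(n+1) \cdots (n+k-1)$ in $\End_{\ca{C}_A}(A)$ holds because $n$ is by hypothesis a positive integer and $\ca{C}_A$ is $\mathbb{Q}$-linear.

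The main obstacle, which is what makes the family $(r_k)$ compatible with the transition maps and thereby produces a well-defined retraction $B \to A$, is the universal identity
\[
\pi_{k+1} \circ (\coev \cdot -) \;=\; (n+k) \cdot \pi_k
\]
of $A$-linear maps $\Sym_A^k(M) \ten{A} \Sym_A^k(M^{\vee}) \to A$. I would verify this by using the Hopf algebra duality between $\Sym_A(M)$ and $\Sym_A(M^{\vee})$: pairing a product $m^* w^* \in M^{\vee} \ten{A} \Sym_A^k(M^{\vee})$ against a product $m v'$ is computed through the $(1,k)$-component of the comultiplication on $\Sym_A^{k+1}(M^{\vee})$, which has one term splitting off the ``new'' factor $m^*$ (contributing $\rk(M) \cdot \pi_k(v', w^*)$ after applying $\coev$) and $k$ terms splitting off each of the ``old'' factors of $w^*$ (each contributing $\pi_k(v',w^*)$, for a total of $k\, \pi_k(v',w^*)$). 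Alternatively, one appeals to naturality of both sides in the dualizable object $M$ to reduce the check to a basis computation in the category of $n$-dimensional $\mathbb{Q}$-vector spaces, as in Deligne's \cite[Lemme~7.16]{DELIGNE}.
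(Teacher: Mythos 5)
Your proposal is correct and follows essentially the same route as the paper: decompose $B$ via Lemma~\ref{lemma:graded_pushout}, send the odd part and the off-diagonal summands to zero, and retract the diagonal colimit system $\Sym_A^k(M)\ten{A}\Sym_A^k(M^{\vee})$ by suitably normalized duality pairings, the crux being the scalar compatibility identity against multiplication by the twisted coevaluation. The paper normalizes by $\rk\bigl(\Sym_A^k M\bigr)=\binom{\rk(M)+k-1}{k}$ and obtains the scalar $\frac{\rk(M)+k}{k+1}$ for the restricted evaluation pairing $\ev\circ(\sigma_k\ten{A}\sigma_k)$ (which is $1/k!$ times your Hopf pairing, so the two identities agree) via an explicit count over $\Sigma_{k+1}$ in string diagrams --- the same combinatorics as your comultiplication count.
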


\begin{proof}
 From Lemma~\ref{lemma:graded_pushout} we know that $B \defl A \ten{\Sym_A(A)} \Sym(M \oplus M^{\vee})$ can be written as a direct sum of two $A$-modules obtained from the even and odd degrees of the algebra $\Sym_A(M \oplus M^{\vee})$. The unit factors through the even degree part, so we simply set our retraction to be zero on the odd degree part. 

 Using the isomorphism $\Sym_A(M \oplus M^{\vee}) \cong \Sym_A(M) \ten{A} \Sym_A(M^{\vee})$ and Formula~\eqref{eqn:connecting_morphism}, we find that it suffices to construct a retraction of the filtered colimit of the morphisms
\begin{equation}\label{eqn:multiplication_with_delta}
 \delta \colon  \Sym^n_A(M) \ten{A} \Sym^n_A(M^{\vee}) \rightarrow  \Sym^{n+1}_A(M) \ten{A} \Sym^{n+1}_A(M^{\vee}) 
\end{equation}
 given by multiplication with the ``twisted'' coevaluation $s_{M^{\vee},M} \circ \coev$. To do this, it suffices to construct a system of morphisms $\tau_n \colon \Sym^n_A(M) \ten{A} \Sym^n_A(M^{\vee}) \rightarrow A$ such that $\tau_0=\id_A$ and the triangle
\begin{equation}\label{eqn:tau_triangle}
\vcenter{
\xymatrix{ \Sym^n_A(M) \ten{A} \Sym^n_A(M^{\vee}) \ar[rd]_{\tau_n} \ar[rr]^{\delta} && \Sym^{n+1}_A(M) \ten{A} \Sym^{n+1}_A(M^{\vee}) \ar[ld]^{\tau_{n+1}} \\ & A}
}
\end{equation}
 is commutative for all $n \in \mathbb{N}$.

 The basic idea of the construction is simple. The morphism $\delta$ is given by a coevaluation and a symmetry, so it makes sense to try to construct $\tau$ by using evaluations. The composite of evaluation, symmetry, and coevalution is equal to the rank. By assumption, the rank of $M$ (and hence of $\Sym^n_A(M)$) is a non-zero natural number in $\mathbb{Q} \subseteq R$, so we can divide by it. This way we can hope to get the desired commutativity of the above triangle.
 
 More precisely, we let $\tau^{\prime}_n$ be the composite
 \[
 \xymatrix{\Sym^n_A(M) \ten{A} \Sym^n_A(M^{\vee}) \ar[r]^-{\sigma_n \otimes \sigma_n} & M^{\ten{A} n} \ten{A} (M^{\vee})^{\ten{A} n} \ar[r]^-{\ev} & A}
 \]
 where
 \[
 \xymatrix{ \Sym^n_A(M) \ar[r]^-{\sigma_n} & M^{\ten{A} n} \ar[r]^-{\pi_n} & \Sym^n_A(M)}
 \]
 denotes the splitting of the idempotent $s \slash n!$ (and similarly for $M^{\vee}$). The evaluation morphism of $M^{\ten{A} n}$ is built from $n$ copies of the evaluation morphism of $M$, and the morphism $\tau_n$ is given by $\tau^{\prime}_n \slash \rk(\Sym^n_A M)$. Note that this rank is indeed invertible since we have
  \[
  \rk(\Sym^n_A M)=\binom{\rk(M)+n-1}{n}
  \]
 by Lemma~\ref{lemma:trace_of_exterior_power}, which is a non-zero natural number by assumption on $M$.
 
 To check that Diagram~\eqref{eqn:tau_triangle} is commutative, we first note that
 \[
 \xymatrix{ M^{\ten{A} n} \ten{A} (M^{\vee})^{\ten{A} n} \ar[r] & M^{\ten{A} n+1} \ten{A} (M^{\vee})^{\ten{A} n+1} \ar[d]^{\pi_{n+1} \ten{A} \pi_{n+1}} \\ \Sym^n_A(M) \ten{A} \Sym^n_A(M^{\vee}) \ar[r]_{\delta} \ar[u]^{\sigma_n \ten{A} \sigma_n} &  \Sym^n_A(M) \ten{A} \Sym^n_A(M^{\vee})}
 \]
 is commutative, where the top morphism is given by left multiplication with the twisted coevaluation morphism
 \[
 \xymatrix{  A \ar[r]^-{\coev} & M^{\vee} \ten{A} M \ar[r]^-{s_{M^{\vee},M}} & M \ten{A} M^{\vee}}
 \]
 in the tensor product $T(M) \ten{A} T(M^{\vee})$ of the tensor algebra of $M$ and of $M^{\vee}$.  This is readily checked by precomposing with the epimorphism $\pi_n \ten{A} \pi_n$. 
 This reduces the problem to checking that the diagram
 \begin{equation}\label{eqn:section_diagram}
\vcenter{
 \xymatrix{ M^{\ten{A} n} \ten{A} (M^{\vee})^{\ten{A} n} \ar[r] & M^{\ten{A} n+1} \ten{A} (M^{\vee})^{\ten{A} n+1} \ar[d]^-{s \slash (n+1)! \ten{A} s \slash (n+1)!} \\ 
   \Sym^n_A(M) \ten{A} \Sym^n_A(M^{\vee}) \ar[d]_{\sigma_n \ten{A} \sigma_n} \ar[u]^{\sigma_n \ten{A} \sigma_n} &  M^{\ten{A} n+1} \ten{A} (M^{\vee})^{\ten{A} n+1}  \ar[d]^{\ev} \\
   M^{\ten{A} n} \ten{A} (M^{\vee})^{\ten{A} n} \ar[r]_-{\ev}  &  A }
}
 \end{equation}
 commutes up to a factor of
 \[
 \frac{\rk\bigl(\Sym^{n+1}_A (M)\bigr)}{ \rk\bigl(\Sym^{n}_A (M)\bigr)}=\frac{\rk (M) + n }{n+1}
 \]
 (in the sense that the top composite is equal to the above factor times the bottom composite).
  
 The fact that the symmetry $s_{M,M}$ is dual to $s_{M^{\vee},M^{\vee}}$ implies that
\[
 \ev \circ \id \ten{A} s_{M^{\vee},M^{\vee}} = \ev \circ s_{M,M} \ten{A} \id \smash{\rlap{,}} 
\]
 where $\ev$ denotes the evaluation morphism of $M \ten{A} M$. Applying this to the idempotent $s \slash (n+1)!$ on the $(n+1)$-fold tensor product of $M$, we find that the equation
 \begin{align*}
 \ev \circ \bigl( s \slash (n+1)! \ten{A} s \slash (n+1)! \bigr)
 &= \ev \circ \Bigl( \bigl( s \slash (n+1)! \bigr)^2 \ten{A} \id \Bigr) \\
 &= \ev \circ \bigl( s \slash (n+1)! \ten{A} \id \bigr) \\
 &= \frac{1}{(n+1)!} \sum_{\rho \in \Sigma_{n+1}} \ev \circ \rho \ten{A} \id 
 \end{align*}
 holds (where now $\ev$ denotes the evaluation morphism of the $(n+1)$-fold tensor product of $M$). Thus the top composite of Diagram~\eqref{eqn:section_diagram} decomposes into a corresponding sum indexed by $\rho \in \Sigma_{n+1}$. Each summand is given by the composite of the inclusion $\sigma_n \ten{A} \sigma_n \colon \Sym_A^n(M) \ten{A} \Sym_A^n(M^{\vee}) \rightarrow M^{\ten{A} n} \ten{A} (M^{\vee})^{\ten{A} n}$, followed by the morphism
\begin{center} 
\begin{tikzpicture}[y=0.80pt,x=0.80pt,yscale=-1, inner sep=0pt, outer sep=0pt, every text node part/.style={font=\scriptsize} ]
  \path[draw=black,line join=miter,line cap=butt,line width=0.650pt]
    (65.0000,932.3624) .. controls (65.0000,977.3624) and (95.0000,997.3624) ..
    (120.0000,997.3624) .. controls (145.0001,997.3624) and (175.0001,977.3624) ..
    (175.0001,932.3624);
  \path[draw=black,line join=miter,line cap=butt,line width=0.650pt]
    (95.0000,932.3624) .. controls (95.0000,952.3624) and (105.0000,967.3624) ..
    (120.0000,967.3624) .. controls (135.0001,967.3624) and (145.0001,952.3624) ..
    (145.0001,932.3624);
  \path[draw=black,line join=miter,line cap=butt,line width=0.650pt]
    (150.0001,932.3624) .. controls (150.0001,912.3624) and (150.0001,882.3624) ..
    (150.0001,857.3624);
  \path[draw=black,line join=miter,line cap=butt,line width=0.650pt]
    (175.0001,932.3624) .. controls (175.0001,917.3624) and (175.0001,877.3624) ..
    (175.0001,857.3624);
  \path[draw=black,line join=miter,line cap=butt,line width=0.650pt]
    (70.0000,932.3624) .. controls (70.0000,972.3624) and (95.0000,992.3624) ..
    (120.0000,992.3624) .. controls (145.0001,992.3624) and (170.0001,972.3624) ..
    (170.0001,932.3624);
  \path[draw=black,line join=miter,line cap=butt,line width=0.650pt]
    (90.0000,932.3624) .. controls (90.0000,957.3624) and (105.0000,972.3624) ..
    (120.0000,972.3624) .. controls (135.0001,972.3624) and (150.0001,957.3624) ..
    (150.0001,932.3624);
  \path[draw=black,line join=miter,line cap=butt,line width=0.650pt]
    (170.0001,932.3624) .. controls (170.0001,907.3624) and (170.0001,882.3624) ..
    (170.0001,857.3624);
  \path[draw=black,line join=miter,line cap=butt,line width=0.650pt]
    (95.0000,932.3624) .. controls (95.0000,897.3624) and (120.0000,902.3624) ..
    (120.0000,857.3624)(90.0000,932.3624) .. controls (90.4537,892.0432) and
    (116.0440,896.6232) .. (115.0000,857.3624)(70.0000,932.3624) .. controls
    (70.0000,892.3624) and (95.0000,897.3624) ..
    (95.0000,857.3624)(65.0000,932.3624) .. controls (65.0000,892.3624) and
    (85.0000,890.8981) .. (85.0000,872.3624) .. controls (85.0000,852.3624) and
    (55.0000,852.3624) .. (55.0000,872.3624) .. controls (55.0000,882.5317) and
    (63.8273,887.3482) .. (75.4991,890.4419)(79.1406,891.3358) .. controls
    (80.2007,891.5776) and (81.2777,891.8093) ..
    (82.3678,892.0331)(86.1256,892.7685) .. controls (90.7998,893.6453) and
    (95.6239,894.4315) .. (100.3072,895.3036)(104.0868,896.0368) .. controls
    (104.9939,896.2211) and (105.8923,896.4106) ..
    (106.7798,896.6067)(110.4418,897.4730) .. controls (130.2205,902.4372) and
    (145.0001,907.5468) .. (145.0001,932.3624);
  \begin{scope}[shift={(493.3071,-873.98425)}]
    \path[fill=black] (-406.3071,1726.3464) node[above right] (text8483-6) {$M
      \cdots$       };
    \path[fill=black] (-378.3071,1726.3464) node[above right] (text8483-6-8) {$M$
      };
    \path[fill=black] (-317.3071,1726.3464) node[above right] (text8483-6-8-3)
      {$M^{\vee}$       };
    \path[fill=black] (-353.3071,1726.3464) node[above right] (text8483-6-8-3-2)
      {$M^{\vee} \cdots$       };
    \path[fill=black] (-413.3071,1806.3464) node[rectangle, draw, line width=0.65pt,
      minimum width=12mm, minimum height=7mm, fill=white, inner sep=0.25mm] (text8483-6-8-3-1) {$\rho$
      };
  \end{scope}

\end{tikzpicture}

\end{center}
 in the string diagram notation of \cite{JOYAL_STREET_TENSOR} (with function composition read from top to bottom). Here the cap denotes the coevaluation and the cups denote the evaluation morphism of $M$. 
 
 There are now two cases. If $\rho(1)=n+1$, then this composite is equal to $\rk(M) \cdot \tau^{\prime}$. Indeed, in this case the twisted coevaluation and one of the evaluations match up, and we do get a copy of $\rk(M)$. Morever, the composite of the section $\sigma_n$ with any morphism built out of symmetries on $M^{\ten{A} n}$ is equal to $\sigma_n$ by definition of the idempotent $s \slash n!$, hence the summand is equal to $\rk(M)\cdot \ev \circ \sigma_n \ten{A} \sigma_n = \rk(M) \cdot \tau_A^{\prime}$, as claimed.
 
 If $\rho(1) \neq n+1$, then the triangular identities imply that the twisted coevaluation will cancel with one of the evaluation morphisms of $M$. Thus in the second case the summand is simply equal to $\tau_n^{\prime}$. It therefore suffices to count the number of times these two cases occur.
 
 There are exactly $n!$ permutations in $\Sigma_{n+1}$ such that $\rho(1)=n+1$. Thus the top composite of Diagram~\ref{eqn:section_diagram} is equal to
\begin{align*}
\frac{n!}{(n+1)!} \rk(M) \cdot \tau_n^{\prime} + \frac{(n+1)!-n!}{(n+1)!} \tau_n^{\prime} 
&=\biggl( \frac{\rk(M)}{n+1} + \frac{n}{n+1} \biggr) \tau^{\prime}_n \\
&=\frac{\rk(M)+n}{n+1} \tau^{\prime}_n \smash{\rlap{,}}
\end{align*}
 as claimed. It follows that the triangle of Diagram~\ref{eqn:tau_triangle} is indeed commutative, which shows that the morphisms $\tau_n$ assemble into the desired retraction of the unit $A \rightarrow B$ as a morphism of $A$-modules. 
\end{proof}

\section{Applications}

\subsection{Quasi-coherent sheaves on algebraic spaces}
 The notion of a \emph{neutral} Tannakian category gives a characterization of the categories of coherent sheaves on stacks at the ``group-like'' end of the spectrum of stacks, that is, of stacks of the form $\ast \slash G$ for flat affine group schemes $G$. The fact that $\QCoh_{\fp}(-)$ sends finite bilimits to finite bicolimits allows us to give the following characterization of categories of coherent sheaves of algebraic spaces.

\begin{dfn}\label{def:essentially_discrete}
 An object $A$ of a 2-category $\ca{K}$ is called \emph{essentially discrete} (respectively \emph{essentially codiscrete}) if the category $\ca{K}(K,A)$ (respectively the category $\ca{K}(A,K)$) is equivalent to a discrete category for all objects $K \in \ca{K}$.
\end{dfn}

\begin{example}
 A stack on the $\fpqc$-site $\Aff_R$ is essentially discrete if and only if it is equivalent to a sheaf of sets. An algebraic stack in the sense of Goerss and Hopkins is essentially discrete if and only if it is equivalent to an \emph{algebraic space}, by which we mean a sheaf associated to an internal equivalence relation
\[
 \xymatrix{R \ar@<0.5ex>[r] \ar@<-0.5ex>[r] & U}
\]
 in the $\fpqc$-site $\Aff_R$ with the property that $R$ is flat over $U$. Note that this is not the usual definition of an algebraic space (it includes the assumption that the space is quasi-compact, but relaxes the requirement that the maps in the equivalence relation are \'etale).
\end{example}

\begin{prop}\label{prop:space_like_codiscrete}
 Let $\ca{A}$ be a weakly Tannakian category. Then $\ca{A} \simeq \QCoh_{\fp}(X)$ for an algebraic space $X$ if and only if $\ca{A}$ is an essentially codiscrete object in the 2-category $\ca{RM}$ of right exact symmetric monoidal categories. In other words, if and only if every right exact symmetric strong monoidal functor with domain $\ca{A}$ and codomain a right exact symmetric monoidal category has no non-identity endomorphisms.
\end{prop}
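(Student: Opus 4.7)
The plan is to pass through the biequivalence between Adams stacks and weakly Tannakian categories from Proposition~\ref{prop:geometric_equals_weakly_Tannakian} and Theorem~\ref{thm:geometric_equals_qc}, together with the identification $\ca{RM}(\QCoh_{\fp}(X),\QCoh_{\fp}(Y)) \simeq \ca{AS}(Y,X)$ provided by Lemma~\ref{lemma:pregeometric_implies_finitary}. For the necessity direction, if $\ca{A} \simeq \QCoh_{\fp}(X)$ is essentially codiscrete in $\ca{RM}$, specializing $\ca{B} = \QCoh_{\fp}(Y)$ shows that $\ca{AS}(Y,X)$ is discrete for every $Y \in \ca{AS}$, so $X$ is essentially discrete and therefore an algebraic space by the example preceding the proposition.

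For the sufficiency direction, suppose $X$ is an algebraic space and let $\alpha \colon F \Rightarrow F$ be a monoidal endomorphism of some $F \colon \ca{A} \to \ca{B}$ in $\ca{RM}$. I would first extend via Lemma~\ref{lemma:pregeometric_implies_finitary} to a tensor functor $\tilde F \colon \QCoh(X) \to \Ind(\ca{B})$ with endomorphism $\tilde \alpha$ and, using density of duals in the pre-geometric category $\QCoh(X)$ (Remark~\ref{rmk:duals_dense}), reduce to proving $\tilde \alpha_V = \id_{\tilde F V}$ for each dualizable $V$. Choosing a faithfully flat affine cover $\pi \colon \Spec B_0 \to X$ with corresponding faithfully flat affine algebra $A \in \QCoh(X)$, Proposition~\ref{prop:pushout_along_affine} produces a lifted tensor functor $\bar{\tilde F} \colon \Mod_{B_0} \simeq \QCoh(X)_A \to \Ind(\ca{B})_{\tilde F A}$ fitting into a bicategorical pushout square with $L_{\tilde F A} \circ \tilde F \simeq \bar{\tilde F} \circ \pi^{\ast}$.

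Granted the key identity $L_{\tilde F A}(\tilde \alpha_V) = \id$, I would conclude as follows. The algebra $A$ is a filtered colimit of short exact sequences $0 \to I \to A_i \to A_i/I \to 0$ of duals preserved by $\tilde F$ (as in the argument recalled in Lemma~\ref{lemma:faithfully_flat_stable}), so $\eta \colon I \to \tilde F A$ is a monomorphism in $\Ind(\ca{B})$. Since $\tilde F V$ is dualizable and hence flat, $\eta \otimes \tilde F V \colon \tilde F V \to \tilde F A \otimes \tilde F V$ is also a monomorphism. Bifunctoriality rewrites $\tilde F A \otimes \tilde \alpha_V = \id$ as $(\eta \otimes \tilde F V) \circ \tilde \alpha_V = \eta \otimes \tilde F V$, and the monomorphism property forces $\tilde \alpha_V = \id$.

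The main obstacle is establishing the key identity $L_{\tilde F A}(\tilde \alpha_V) = \id$, which is where the algebraic-space hypothesis enters essentially. My plan is to combine two trivialities: first, tensor functors out of $\Mod_{B_0}$ admit no nontrivial monoidal endomorphisms, since the projective generator $B_0$ maps to the unit and right exactness propagates the identity to every module; second, monoidal endomorphisms of $\pi^{\ast} \colon \QCoh(X) \to \Mod_{B_0}$ correspond via the biequivalence of Theorem~\ref{thm:geometric_equals_qc} to 2-cells in $\ca{AS}(\Spec B_0, X)$, which is discrete because $X$ is essentially discrete. These trivialities should combine through Lemma~\ref{lemma:module_category_universal_property} and the 2-dimensional universal property of the bicategorical pushout of Proposition~\ref{prop:pushout_along_affine} to force the endomorphism of the common composition $\bar{\tilde F} \circ \pi^{\ast} \simeq L_{\tilde F A} \circ \tilde F$ induced by $\tilde \alpha$ to be the identity.
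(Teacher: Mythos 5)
Your necessity direction is fine, and the outer layers of the sufficiency direction (extending to $\tilde{F} \colon \QCoh(X) \to \Ind(\ca{B})$, reducing to dualizable $V$ by density, and the final cancellation against a monomorphism) are reasonable. But the step you yourself flag as the ``main obstacle'' --- establishing $\tilde{F}A \otimes \tilde{\alpha}_V = \id$ --- is a genuine gap, and the proposed combination of your two trivialities does not close it. The two-dimensional universal property of the pushout of Proposition~\ref{prop:pushout_along_affine} only controls 2-cells between functors \emph{out of} the pushout: such a 2-cell corresponds to a pair $(\phi,\psi)$ of endomorphisms of the two legs $\bar{\tilde{F}}$ and $\tilde{F}A\otimes -$, compatible after whiskering with $\pi^{\ast}$ and $\tilde{F}$ respectively. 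The 2-cell you need to kill, $(\tilde{F}A\otimes -) \ast \tilde{\alpha}$, is natural only in objects of $\QCoh(X)$ and is not of the form $\psi \ast \tilde{F}$ for any endomorphism $\psi$ of $\tilde{F}A\otimes -$, so the universal property never sees it. The alternative route through Lemma~\ref{lemma:module_category_universal_property} is circular: under the equivalence $P_{\QCoh(X),A} \simeq Q_{\QCoh(X),A}$, the endomorphism $\tilde{F}A\otimes\tilde{\alpha}$ of $\bar{\tilde{F}}\circ\pi^{\ast}$ lifts to an endomorphism of $\bar{\tilde{F}}$ (to which your first triviality would apply) precisely when $\mu\circ(\id_{\tilde{F}A}\otimes\tilde{\alpha}_A)=\mu$; precomposing with $\eta\otimes\id$ shows this is equivalent to $\tilde{\alpha}_A=\id_{\tilde{F}A}$, which is essentially the statement being proved. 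Meanwhile your second triviality concerns endomorphisms of $\pi^{\ast}$, but $\tilde{\alpha}$ is an endomorphism of $\tilde{F}$, not of $\pi^{\ast}$, and nothing in the argument transports it there --- so the hypothesis that $X$ is an algebraic space is never actually applied to $\tilde{\alpha}$. (A secondary issue: you invoke the argument of Lemma~\ref{lemma:faithfully_flat_stable} to see that $\eta \colon I \to \tilde{F}A$ is monic, but that lemma requires the codomain to be pre-geometric, whereas $\Ind(\ca{B})$ for an arbitrary $\ca{B} \in \ca{RM}$ need not even be abelian.)

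The paper avoids all of this by a purely formal argument. The pair $(\tilde{F},\tilde{\alpha})$ is classified by the power $X^{\Aut}$ (the inertia stack), i.e.\ by the \emph{copower} $\Aut\cdot\QCoh_{\fp}(X)$ in $\ca{RM}$: an object $A$ of a 2-category with powers is essentially discrete if and only if the canonical 1-cell $A \to A^{\Aut}$ is an equivalence, and dually for copowers. Since $X^{\Aut}$ is a finite bilimit (the pullback of the affine diagonal along itself), Corollary~\ref{cor:bilimits} identifies $\QCoh_{\fp}(X^{\Aut})$ with the copower $\Aut\cdot\QCoh_{\fp}(X)$, and since $\QCoh_{\fp}$ reflects equivalences, $X\simeq X^{\Aut}$ if and only if $\Aut\cdot\QCoh_{\fp}(X)\simeq\QCoh_{\fp}(X)$, which is exactly essential codiscreteness against \emph{every} $\ca{B}\in\ca{RM}$ at once. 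If you want to salvage your strategy, the affine morphism to push out along should be the diagonal $\Delta \colon X \to X\times X$, not the cover $\pi$; that is where the algebraic-space hypothesis enters.
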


\begin{proof}
 From the recognition theorem we know that $\ca{A} \simeq \QCoh_{\fp}(X)$ for an Adams stack $X$. Since the 2-category $\ca{AS}$ of Adams stacks has finite bilimits, the power (or cotensor) $X^{\Aut}$ of $X$ by the free automorphism $\Aut \in \Gpd$ exists. Note that $X^{\Aut}$ is often called the inertia stack of $X$. To give a 1-cell with target $X^{\Aut}$ amounts to giving a 1-cell with target $X$ together with an automorphism. Thus $X$ is essentially discrete if and only if the canonical morphism $X \rightarrow X^{\Aut}$ is an equivalence. Note that this provides a characterization of essentially discrete objects in any 2-category with powers.

 The claim therefore follows from the facts that $\QCoh_{\fp} \colon \ca{AS}^{\op} \rightarrow \ca{RM}$ sends finite bilimits to finite bicolimits and reflects equivalences.
\end{proof}

 As one of the referees pointed out, it would be interesting to have an intrinsic characterization of codiscrete symmetric monoidal categories. Moreover, it would be very interesting to find a criterion for when the algebraic space $X$ in Proposition~\ref{prop:space_like_codiscrete} is a scheme. 

\subsection{Finite \'etale morphisms}\label{section:etale}
 A number of properties of morphisms between algebraic stacks are defined by using pullbacks and corresponding properties of morphisms of schemes. We use the example of finite\footnote{Since we are working with stacks that are not necessarily locally noetherian, we require that finite morphisms are locally given by algebras whose underlying module is finitely presentable (not just finitely generated).} \'etale morphisms to illustrate how Theorem~\ref{thm:pushouts} can be used to identify some of the duals of these properties in the world of tensor categories. Because of Proposition~\ref{prop:pushout_along_affine}, this works particularly well for classes of affine morphisms.

\begin{dfn}\label{def:separable_algebra}
 Let $\ca{C}$ be a cosmos\footnote{A \emph{cosmos} is a complete and cocomplete symmetric monoidal closed category.}. An algebra $A \in \ca{C}$ is called \emph{separable} if $A$, considered as an $A$-$A$-bimodule, is \emph{small projective}, in the sense that the covariant ($\ca{C}$-enriched) functor from $A$-$A$-bimodules to $\ca{C}$ represented by $A$ preserves all weighted colimits. In other words, if $A$ lies in the Cauchy completion of $A \otimes A^{\op}$.
\end{dfn}


\begin{lemma}
 Let $\ca{C}$ be a cosmos. If $A \in \ca{C}$ is a commutative algebra, then $A$ is separable if and only if $A$ has a dual in the symmetric monoidal category $\ca{C}_{A\otimes A}$ of $A\otimes A$-modules (with tensor product $- \ten{A\otimes A}-$).
\end{lemma}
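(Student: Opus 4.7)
Plan: The strategy is to translate both sides through a standard piece of enriched category theory, namely the identification of the Cauchy completion of a commutative monoid with its dualizable modules. First, note that the commutativity of $A$ lets us identify the enveloping algebra $A \otimes A^{\op}$ with $A \otimes A$, a commutative algebra in the cosmos $\ca{C}$; consequently $\ca{C}_{A\otimes A}$ is itself a cosmos with symmetric monoidal structure $(-\ten{A\otimes A}-, A\otimes A)$, and the category of $A$-$A$-bimodules (with $\ca{C}$-enriched hom) coincides with $\ca{C}_{A\otimes A}$ viewed as the category of $\ca{C}$-enriched presheaves on the one-object $\ca{C}$-category determined by $A \otimes A$. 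Under this identification, Definition~\ref{def:separable_algebra} says precisely that $A$ lies in the Cauchy completion of $A \otimes A$.

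Next, I would invoke (and verify directly) the following enriched-categorical fact: for a commutative algebra $B$ in a cosmos $\ca{C}$, a $B$-module $M$ lies in the Cauchy completion of $B$ (equivalently, the $\ca{C}$-enriched representable $\ca{C}_B(M,-) \colon \ca{C}_B \to \ca{C}$ preserves all weighted colimits) if and only if $M$ is dualizable in the symmetric monoidal category $(\ca{C}_B, \ten{B}, B)$. The forward direction unpacks the Cauchy datum --- a $B$-module $M^\vee$ together with a unit $\eta \colon I \to M^\vee \ten{B} M$ in $\ca{C}$ and a counit $\varepsilon \colon M \otimes M^\vee \to B$ in the category of $B$-$B$-bimodules --- and uses the free-forgetful adjunction $\ca{C} \rightleftarrows \ca{C}_B$ (to reinterpret $\eta$ as a $B$-linear map $B \to M^\vee \ten{B} M$) combined with commutativity of $B$ (which forces the bimodule map $\varepsilon$ to factor through the coequalizer $M \otimes M^\vee \to M \ten{B} M^\vee$) to repackage it as genuine dualization data in $\ca{C}_B$. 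The reverse direction simply runs this process backwards, and the triangle identities translate under both constructions since the various tensor products in play agree up to the canonical quotient maps.

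Applying this equivalence to $B = A \otimes A$ and $M = A$ (with its canonical $(A\otimes A)$-module structure induced by the multiplication $\mu \colon A \otimes A \to A$) immediately yields the lemma. The main subtlety I expect to grapple with is verifying that the bicategorical Cauchy data and the monoidal dualization data are genuinely interconvertible: this is the one spot where the commutativity of $B$ is indispensable, since it is what makes the ``balanced'' and ``$B$-bilinear'' conditions collapse, and it is also what permits the canonical comparison $M \otimes M^\vee \to M \ten{B} M^\vee$ to mediate between an $A$-$A$-bimodule map and an $(A\otimes A)$-module map. Once this correspondence is laid out, the lemma is formal.
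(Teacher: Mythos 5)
Your proposal is correct and follows the same route as the paper: the paper's entire proof is a citation of \cite[Proposition~8.4.1]{SCHAEPPI} applied to the one-object $\ca{C}$-category with endomorphism algebra $A\otimes A$, which is precisely the enriched-categorical fact (Cauchy completion of a commutative algebra $B$ $=$ dualizable objects of $(\ca{C}_B,\ten{B})$) that you state and verify directly. Your unpacking of the Cauchy datum --- using the free--forgetful adjunction for the unit and commutativity of $B$ to descend the bimodule counit through $M\otimes M^{\vee}\rightarrow M\ten{B}M^{\vee}$ --- is exactly the content of the cited proposition, so you have simply supplied the argument the paper outsources.
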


\begin{proof}
 See for example \cite[Proposition~8.4.1]{SCHAEPPI}, applied to the case $\ca{V}=\ca{C}$ and $\ca{B}$ the 1-object $\ca{C}$-category with endomorphism algebra $A \otimes A$.
\end{proof}

\begin{rmk}
 Note that separable algebras are often defined by asking that the multiplication $A \otimes A^{\op} \rightarrow A$ is split as a morphism of $A\otimes A^{\op}$-modules. However, if the unit $I \in \ca{C}$ is not a projective object (which is often the case for categories of quasi-coherent sheaves), then this is a stronger condition than the requirement of Definition~\ref{def:separable_algebra}.
\end{rmk}

 \begin{dfn}\label{dfn:separable_projective}
 Let $\ca{C}$ be a cosmos. A commutative algebra $A \in \ca{C}$ is called \emph{projective separable algebra} if $A$ is separable and the underlying object $A \in \ca{C}$ has a dual.
 \end{dfn}

\begin{thm}\label{thm:etale_characterization}
 Let $F \colon \ca{C} \rightarrow \ca{D}$ be a tensor functor between geometric categories. Then $F$ is induced by a finite \'etale morphism of algebraic stacks if and only if the right adjoint $U$ of $F$ is faithful and exact (that is, $F$ is cohomologically affine) and the commutative algebra $U(I)$ is projective separable (where $I \in \ca{D}$ denotes the unit object).

 Moreover, the 2-category of finite \'etale morphisms over a fixed Adams stack $X$ is equivalent to the opposite of the category of projective separable algebras in $\QCoh(X)$.
\end{thm}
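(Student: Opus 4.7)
The proof will proceed by reducing to the affine case via Serre's criterion and then applying faithfully flat descent to transfer the classical characterization of finite \'etale algebras. First I would observe that if $F = f^{\ast}$, then $F$ being cohomologically affine is equivalent to $f$ being affine by Serre's criterion (Corollary~\ref{cor:serres_criterion}). Under this hypothesis, Proposition~\ref{prop:cohomologically_affine_implies_affine} together with Remark~\ref{rmk:algebra_from_affine_functor} identifies $\ca{D}$ with $\ca{C}_A$ for $A = U(I)$, and identifies $F$ with the free module functor $A \otimes - \colon \ca{C} \to \ca{C}_A$, corresponding geometrically to the relative spectrum $f \colon \Spec_X(A) \to X$. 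Thus it suffices to prove that this $f$ is finite \'etale if and only if $A$ is projective separable.

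To establish this reduced claim I would apply faithfully flat descent. Choose a faithfully flat affine algebra in $\ca{C}$ by Corollary~\ref{cor:faithfully_flat_coverings_vs_algebras}, giving a faithfully flat cover $q \colon \Spec(B_0) \to X$ corresponding to a tensor functor $q^{\ast} \colon \ca{C} \to \Mod_{B_0}$. By Corollary~\ref{cor:pushout_along_affine_functor} combined with Proposition~\ref{prop:pushout_along_affine}, the pullback $Y \pb{X} \Spec(B_0)$ has category of quasi-coherent sheaves equivalent to $(\Mod_{B_0})_{q^{\ast}A}$, so it is the relative spectrum of the commutative $B_0$-algebra $C = q^{\ast}A$. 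Since finite \'etaleness of morphisms of algebraic stacks is fpqc-local on the base, $f$ is finite \'etale if and only if $C$ is a finite \'etale $B_0$-algebra; the classical theorem of commutative algebra then identifies finite \'etale commutative $B_0$-algebras with projective separable commutative $B_0$-algebras. On the other hand, $q^{\ast}$ preserves duals (tensor functors always do) and hence preserves separability, while the converse direction --- that projective separability of $q^{\ast}A$ implies that of $A$ --- should follow from faithfully flat (equivalently, comonadic) descent of the evaluation and coevaluation data.

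The 2-categorical assertion then follows: the correspondence on objects is given by $f \mapsto f_{\ast}\mathcal{O}_Y = U(I)$, while the identification of morphisms uses Lemma~\ref{lemma:module_category_universal_property}, which shows that tensor functors $\ca{C}_{A_1} \to \ca{C}_{A_2}$ commuting with the respective free module functors correspond to algebra maps $A_2 \to A_1$ in $\ca{C}$ --- whence the opposite category. Natural transformations between such tensor functors are identities because the domain $\ca{C}$ is geometric, as invoked via \cite[Proposition~5.2.3]{SAAVEDRA} in the proof of Corollary~\ref{cor:bilimits}, so the hom-categories are discrete and the 2-category collapses to a 1-category.

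The main technical obstacle will be the faithfully flat descent of projective separability: showing that if $q^{\ast}A$ has a dual in $\Mod_{B_0}$ and in the category of $q^{\ast}A \ten{B_0} q^{\ast}A$-modules, then $A$ has a dual in $\ca{C}$ and in $\ca{C}_{A \otimes A}$ respectively. One expects this to follow from the comonadic structure of the adjunction $q^{\ast} \dashv q_{\ast}$ by descending the evaluation and coevaluation morphisms together with the triangle identities, but writing this out carefully in the enriched setting of Definition~\ref{def:separable_algebra} will require some care. A secondary point that needs checking is that the enriched notion of separable algebra used in Definition~\ref{def:separable_algebra} coincides with the classical notion of separable commutative algebra over a commutative ring in the case $\ca{D} = \Mod_{B_0}$.
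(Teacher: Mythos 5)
Your overall strategy is the same as the paper's: reduce to the affine case by pulling back along a faithfully flat affine cover, invoke the classical identification of finite \'etale algebras with projective separable algebras over a commutative ring (the paper cites Lenstra for this, so your ``secondary point'' is handled by a citation rather than a proof), and transfer the property across the cover by faithfully flat descent; the 2-categorical statement is likewise dispatched via Lemma~\ref{lemma:module_category_universal_property} in both treatments. The converse direction also matches: tensor functors preserve duals, hence preserve projective separability, so the pullback to any affine is finite \'etale.

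The one genuinely substantive step, however, is exactly the one you defer: faithfully flat descent of ``having a dual'' (and hence of projective separability), which you flag as ``the main technical obstacle'' without carrying it out. This is where the real content of the proof lives --- it is Proposition~\ref{prop:descent_for_duals} and Corollary~\ref{cor:separable_algebra_descent} in the paper --- so as written your argument is incomplete at its crux. Moreover, the route you sketch (descending the evaluation and coevaluation morphisms together with the triangle identities through the comonadic adjunction) is more awkward than what is actually needed. The paper instead first descends \emph{finite presentability}: if $B \otimes M$ is finitely presentable then so is $M$, proved using the equalizer $M \to B\otimes M \rightrightarrows B \otimes B \otimes M$. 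Since $\ca{C}$ is pre-geometric, a finitely presentable $M$ admits a presentation $D \to D' \to M \to 0$ by objects with duals; this exhibits $[M,N]$ as a finite limit of objects of the form $D^{\vee}\otimes N$, so the left exact functor $B\otimes -$ preserves the internal hom $[M,N]$. One then checks that the canonical map $[M,I]\otimes N \to [M,N]$ is an isomorphism after applying the faithful exact functor $B \otimes -$, which gives the dual of $M$ without ever constructing descent data. Applying this to $\ca{C}$ and to $\ca{C}_{A\otimes A}$ (with the faithfully flat algebra $B\otimes A \otimes A$) yields descent of projective separability. One small further slip: natural transformations between the tensor functors in question are \emph{invertible} (that is the content of the cited result of Saavedra), not automatically identities; the discreteness of the hom-categories comes from the compatibility constraint in Lemma~\ref{lemma:module_category_universal_property}, not from rigidity of the domain alone.
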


 In order to prove this we use faithfully flat descent for objects with duals in pre-geometric categories. As one of the referees pointed out, the assumption that the categories are pre-geometric is not really necessary, but it allows us to give a simple argument using internal hom-objects instead of having to construct descent data explicitly.

\begin{prop}\label{prop:descent_for_duals}
 Let $\ca{C}$ be a pre-geometric category, and let $B \in \ca{C}$ be a faithfully flat algebra. Then $M \in \ca{C}$ is finitely presentable if and only if $B \otimes M \in \ca{C}_B$ is finitely presentable, and $M \in \ca{C}$ has a dual if and only if $B \otimes M \in \ca{C}_B$ has a dual.
\end{prop}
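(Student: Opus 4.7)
The plan is to prove both statements by faithfully flat descent along $B$. The key structural fact is that, since $B$ is faithfully flat, the functor $L = B\otimes-\colon \ca{C} \to \ca{C}_B$ is faithful and exact, hence conservative, and the adjunction $L\dashv U$ (with $U$ the forgetful functor) is comonadic. Equivalently, for every $X \in \ca{C}$, the canonical diagram
\[
 X \to B\otimes X \rightrightarrows B\otimes B\otimes X
\]
is an equalizer in $\ca{C}$, and this equalizer is preserved by tensoring with any object (since the case $X=I$ is an equalizer of an exact sequence with flat quotient, by Lemma~\ref{lemma:faithfully_flat_condition}).

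For the statement about finite presentability, the forward direction is immediate: if $M$ is finitely presentable in $\ca{C}$, then $\ca{C}_B(B\otimes M,-)\cong \ca{C}(M,U-)$ preserves filtered colimits, because colimits in $\ca{C}_B$ are created by $U$. Conversely, assume $B\otimes M$ is finitely presentable in $\ca{C}_B$, and let $X = \colim_j X_j$ be a filtered colimit in $\ca{C}$. Applying the descent equalizer above and rewriting via the adjunction $L\dashv U$ gives
\[
 \ca{C}(M,X) \;\cong\; \mathrm{eq}\bigl(\ca{C}_B(B\otimes M, B\otimes X) \rightrightarrows \ca{C}_B(B\otimes M, B\otimes B\otimes X)\bigr).
\]
Since both $B\otimes -$ and $B\otimes B\otimes -$ preserve filtered colimits and $B\otimes M$ is finitely presentable, each term commutes with $\colim_j$. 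Because filtered colimits commute with finite limits in $\Set$, we conclude $\ca{C}(M,X)\cong \colim_j \ca{C}(M,X_j)$, so $M$ is finitely presentable.

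For the statement about duals, the forward direction is automatic since tensor functors preserve dualizable objects. For the converse, the central tool is the adjunction formula
\[
 U[LM,P]_{\ca{C}_B} \;\cong\; [M,UP]_{\ca{C}}
\]
for $P\in \ca{C}_B$, which follows by Yoneda from $\ca{C}_B(LX,[LM,P]_{\ca{C}_B}) \cong \ca{C}_B(L(X\otimes M),P)\cong \ca{C}(X\otimes M, UP)$. Set $M^{\vee}:=[M,I]_{\ca{C}}$; I would show the canonical evaluation $M^{\vee}\otimes M\to I$ is a dual pairing by verifying that $\phi_X\colon M^{\vee}\otimes X\to [M,X]_{\ca{C}}$ is an isomorphism for every $X$. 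Since $L$ is faithful and exact, it suffices to check that $L\phi_X$ is an isomorphism. Using the adjunction formula, $[M,B\otimes X]_{\ca{C}} \cong U[LM,LX]_{\ca{C}_B}$, and by the assumed dualizability of $LM$ the latter equals $U\bigl((LM)^{\vee}\otimes_B LX\bigr)$, which is cocontinuous in $X$. Applying the descent equalizer for $X$ and the fact that $[M,-]$ sends it to an equalizer (as a right adjoint) then reduces the problem to identifying $B\otimes [M,I]$ with $U(LM)^{\vee}=[M,B]_{\ca{C}}$, an identification produced by tracing through $L\phi_I$ and the mate of $L(\mathrm{ev})$ under the dualizability of $LM$.

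The main obstacle will be the last step of this chain: relating the internal hom $[M,I]$ taken in $\ca{C}$ to the internal hom $[LM,B]$ taken in $\ca{C}_B$, since there is no \emph{a priori} reason why $B\otimes[M,I]\to [M,B]$ should be invertible without already knowing that $M$ has a dual. Both the dualizability hypothesis on $LM$ (which supplies the needed cocontinuity/projection-formula behavior downstairs) and the descent equalizer for $I$ (which bridges $[M,B]$ and $[M,I]$ upstairs) are essential; combining these via the adjunction formula for internal homs is precisely the ``simple argument using internal hom-objects'' alluded to in the text, bypassing an explicit construction of descent data.
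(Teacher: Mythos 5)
Your treatment of the finite-presentability statement is correct and is essentially the paper's argument: both directions come down to the comonadic equalizer $X \rightarrow B\otimes X \rightrightarrows B\otimes B\otimes X$ (the paper phrases the converse elementwise, factoring a given $f \colon M \rightarrow \colim N_i$ after base change and descending the factorization through the equalizer, but the content is the same as your functorial version).

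The duals statement is where your argument has a genuine gap, and it is exactly the one you flag yourself: you need the canonical map $B\otimes[M,N] \rightarrow [B\otimes M, B\otimes N]_{\ca{C}_B}$ to be invertible, and neither the adjunction formula $U[LM,P]\cong[M,UP]$ nor the descent equalizer produces this on its own. Tracing through ``$L\phi_I$ and the mate of $L(\mathrm{ev})$'' does not close the circle, because identifying $B\otimes[M,I]$ with $(LM)^{\vee}$ amounts to descending $(LM)^{\vee}$ along $B$ --- i.e.\ to the explicit descent-datum argument you set out to avoid. The missing idea is to use what you have just proved together with pre-geometricity: $M$ is finitely presentable and the dualizable objects generate, so there is an exact sequence $D \rightarrow D^{\prime} \rightarrow M \rightarrow 0$ with $D, D^{\prime}$ dualizable. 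Hence $[M,N]$ is the kernel of $(D^{\prime})^{\vee}\otimes N \rightarrow D^{\vee}\otimes N$, a finite limit of objects that $B\otimes -$ visibly preserves; exactness (flatness) of $B\otimes -$ then gives $B\otimes[M,N]\cong[B\otimes M,B\otimes N]_{\ca{C}_B}$ for every $N$. With that in hand your strategy goes through: $B\otimes\phi_N$ becomes the canonical map $(B\otimes M)^{\vee}\ten{B}(B\otimes N)\rightarrow[B\otimes M,B\otimes N]_{\ca{C}_B}$, which is invertible because $B\otimes M$ has a dual, and conservativity of the faithful exact functor $B\otimes -$ shows $\phi_N$ is invertible, so $M$ has a dual. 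Note that this presentation step is the only place the pre-geometricity of $\ca{C}$ enters the duals argument, and it is the reason the finite-presentability statement must be proved first.
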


\begin{proof}
 The functor $B\otimes - \colon \ca{C}\rightarrow \ca{C}_B$ is a tensor functor, so it preserves duals. Its right adjoint is cocontinuous, hence finitary, so $B\otimes -$ preserves finitely presentable objects. This shows one direction.

 To see the converse, note that $B\otimes B \otimes M$ is finitely presentable as a $B\otimes B$-module if $B\otimes M$ is (by the above argument). Let $N$ be the filtered colimit of the $N_i$, and let $f \colon M \rightarrow N$. We need to show that $f$ factors through one of the $N_i$, and that any two such factorizations become equal after passage to a higher index.

 The morphisms $B\otimes f$ and $B\otimes B \otimes f$ factor through $B\otimes N_i$ and $B \otimes B \otimes N_j$ respectively. Passing to a higher index if necessary, we obtain factorizations for which the diagram
\[
 \xymatrix@!C=80pt{
B \otimes M \ar@<0.5ex>[r]^-{\eta \otimes B \otimes M} \ar[d]  \ar@<-0.5ex>[r]_-{B \otimes \eta \otimes M} & B \otimes B \otimes M \ar[d] \\
B \otimes N_i  \ar@<0.5ex>[r]^-{\eta \otimes B \otimes N_i} \ar@<-0.5ex>[r]_-{B \otimes \eta \otimes N_i} & B \otimes B \otimes N_i 
}
\]
 is serially commutative. Since
\[
 \xymatrix@C=50pt{M \ar[r]^-{\eta \otimes M} & B \otimes M \ar@<0.5ex>[r]^-{\eta \otimes B \otimes M} \ar@<-0.5ex>[r]_-{B \otimes \eta \otimes M} & B \otimes B \otimes M}
\]
 is an equalizer diagram for all $M \in \ca{C}$, we conclude that $f$ factors through $N_i$. To see that two such factorizations become equal after passage to a higher index we use the fact that $\eta \otimes N$ is monic.

 It remains to check that $M$ has a dual if $B\otimes M$ does. Here we use the fact that $\ca{C}$ is pre-geometric. Note that $\ca{C}_B$ is pre-geometric as well, so any object with dual is finitely presentable. The above argument implies that $M$ is finitely presentable as well. Therefore we can find an exact sequence
\[
 \xymatrix{D \ar[r] & D^{\prime} \ar[r] & M \ar[r] & 0}
\]
 in $\ca{C}$ where $D$ and $D^{\prime}$ have duals. It follows that for any $N \in \ca{C}$, the sequence
\[
 \xymatrix{0 \ar[r] & [M,N] \ar[r] & [D^{\prime},N]\cong (D^{\prime})^{\vee}\otimes N \ar[r] & [D,N]\cong D^{\vee} \otimes N}
\] 
 is exact, where $[-,-]$ denotes the internal hom functor. Left exactness of the tensor functor $B\otimes -$ therefore implies that the internal hom-object $[M,N]$ is preserved by $B\otimes -$. The claim now follows from the fact that $M$ has a dual if and only if the canonical morphism
\[
 [M,I]\otimes N \rightarrow [M,N]
\]
 is an isomorphism for all $N\in \ca{C}$.
\end{proof}

\begin{cor}\label{cor:separable_algebra_descent}
 Let $\ca{C}$ be a pre-geometric category, and let $B \in \ca{C}$ be a faithfully flat algebra. Then a commutative algebra $A \in \ca{C}$ is projective separable if and only if $B\otimes A \in \ca{C}_B$ is projective separable.
\end{cor}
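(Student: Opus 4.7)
The plan is to break the condition ``projective separable'' into its two constituent parts and apply Proposition~\ref{prop:descent_for_duals} to each separately, once to the category $\ca{C}$ itself and once to the module category $\ca{C}_{A \otimes A}$.

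First I would handle the underlying object: by Proposition~\ref{prop:descent_for_duals} applied to the faithfully flat algebra $B \in \ca{C}$, the object $A$ has a dual in $\ca{C}$ if and only if $B \otimes A$ has a dual in $\ca{C}_B$. This disposes of the ``projective'' part of projective separable.

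Next I would handle separability. By definition, $A$ is separable in $\ca{C}$ if and only if $A$, viewed as an object of $\ca{C}_{A \otimes A}$ via the multiplication, has a dual in $\ca{C}_{A \otimes A}$. Similarly, $B \otimes A$ is separable as a $B$-algebra if and only if it has a dual in $\ca{C}_{(B \otimes A) \ten{B} (B \otimes A)}$. The canonical isomorphism $(B \otimes A) \ten{B} (B \otimes A) \cong B \otimes A \otimes A$ of commutative $B$-algebras then identifies this category with $\ca{C}_{B \otimes A \otimes A}$, which in turn is the category of $(B\otimes A\otimes A)$-modules in $\ca{C}_{A \otimes A}$ (by the evident iterated base change). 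Under this identification, the dualizable object in question is precisely $(B \otimes A \otimes A) \ten{A\otimes A} A \cong B \otimes A$.

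Now the key step: apply Proposition~\ref{prop:descent_for_duals} to the pre-geometric category $\ca{C}_{A\otimes A}$ and the algebra $B \otimes A \otimes A$, viewed as an $A \otimes A$-algebra in $\ca{C}$. To justify this application I would verify that this algebra is faithfully flat in $\ca{C}_{A\otimes A}$: tensoring over $A \otimes A$ with it computes the same underlying object as $B \otimes -$, and since the forgetful functor $\ca{C}_{A \otimes A} \to \ca{C}$ reflects exactness and faithfulness of endofunctors built from tensoring with an object of $\ca{C}$, faithful flatness of $B \otimes A \otimes A$ in $\ca{C}_{A \otimes A}$ follows from faithful flatness of $B$ in $\ca{C}$. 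Proposition~\ref{prop:descent_for_duals} then yields that $A$ has a dual in $\ca{C}_{A \otimes A}$ if and only if $B \otimes A$ has a dual in $\ca{C}_{B \otimes A \otimes A}$, which by the previous paragraph is exactly the desired equivalence for separability. Combining both parts gives the corollary.

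The only mildly subtle point — which I anticipate to be the main piece of bookkeeping — is the identification of the ``outer'' module category $\ca{C}_{(B \otimes A) \ten{B}(B\otimes A)}$ (formed inside $\ca{C}_B$) with the iterated module category $(\ca{C}_{A \otimes A})_{B \otimes A \otimes A}$; everything else is then a direct application of Proposition~\ref{prop:descent_for_duals}.
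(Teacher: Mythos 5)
Your proposal is correct and follows essentially the same route as the paper: one application of Proposition~\ref{prop:descent_for_duals} to $B$ in $\ca{C}$ for the dualizability of the underlying object, and a second application to the pre-geometric category $\ca{C}_{A\otimes A}$ with the faithfully flat algebra $B\otimes A\otimes A \cong (B\otimes A)\ten{B}(B\otimes A)$ for separability. The extra bookkeeping you supply (identifying the iterated module categories and verifying faithful flatness of $B\otimes A\otimes A$ over $A\otimes A$) is exactly what the paper leaves implicit, and it checks out.
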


\begin{proof}
 By Proposition~\ref{prop:descent_for_duals}, the underlying object of $A$ has a dual in $\ca{C}$ if and only if the underlying object of $B\otimes A$ has a dual in $\ca{C}_B$. The same proposition, applied to the pre-geometric category $\ca{C}_{A\otimes A}$ and the faithfully flat $A\otimes A$-algebra $B\otimes A\otimes A \cong (B\otimes A) \ten{B} (B\otimes A)$ shows that $A$ is a separable algebra in $\ca{C}$ if and only if $B\otimes A$ is a separable algebra in $\ca{C}_B$.
\end{proof}

\begin{proof}[Proof of Theorem~\ref{thm:etale_characterization}.]
 Let $f \colon X \rightarrow Y$ be a finite \'etale morphism, and let $Y_0 \rightarrow Y$ be a faithfully flat covering by an affine scheme. Let $B \in \QCoh(Y)$ be the faithfully flat affine algebra corresponding to $Y_0 \rightarrow Y$. Since $f$ is affine, the functor $f^{\ast}$ is equivalent to tensoring with a commutative algebra $A \in \QCoh(Y)$. We need to show that $A$ is a projective separable algebra.

 To see this, note that the pullback of $f$ along the covering $Y_0 \rightarrow Y$ is a finite \'etale morphism between affine schemes, so it is given by tensoring with a separable projective algebra\footnote{In the case of affine schemes, the two definitions of separable projective algebras agree.} (see \cite[Propositions~6.9 and 6.11]{LENSTRA}). On the other hand, the inverse image functor of this pullback is given by $(B \otimes A) \ten{B} -$ (see Proposition~\ref{prop:pushout_along_affine} and Theorem~\ref{thm:pushouts}). By faithfully flat descent it follows that $A$ is indeed projective separable (see Corollary~\ref{cor:separable_algebra_descent}).

 Conversely, assume that $f^{\ast}$ is equivalent to tensoring with a projective separable algebra. Then the pullback of $f$ along any morphism $g\colon Z \rightarrow Y$ has the same property. Indeed, by Proposition~\ref{prop:pushout_along_affine} and Theorem~\ref{thm:pushouts}, the inverse image functor of the pullback is given by tensoring with the commutative algebra $g^{\ast} A$, and tensor functors preserve separable projective algebras. This applies in particular in the case where $Z$ is affine, hence $f$ is finite \'etale by \cite[Propositions~6.9 and 6.11]{LENSTRA}.

 The claim about the 2-category of finite \'etale morphisms over a fixed Adams stack $X$ follows from Lemma~\ref{lemma:module_category_universal_property}.
\end{proof}

\subsection{Infinite limits of Adams stacks in characteristic zero}\label{section:complete}

 We conclude with an application of the intrinsic description of geometric categories for the case that $R$ is a $\mathbb{Q}$-algebra given in Theorem~\ref{thm:description}. The fact that $\QCoh_{\fp}(-)$ preserves finite limits raises the question if the same is true for infinite limits. This would follow if we could show that both the 2-category of Adams stacks is closed under infinite limits in the 2-category of stacks on the $\fpqc$-site $\Aff_R$, and the 2-category of weakly Tannakian categories is closed under infinite colimits in the 2-category of right exact symmetric monoidal categories. 
 
  Here we only prove the latter, using Theorem~\ref{thm:description}. Note that this does imply that the 2-category of Adams stacks has infinite (bicategorical) limits, but it does not imply that the inclusion of Adams stacks in the 2-category of $\fpqc$-stacks preserves these infinite limits. It would be interesting to know if this is true, that is, if Adams stacks are closed under the formation of infinite products.

 \begin{thm}\label{thm:closure_under_colimits}
 Suppose that $R$ is a $\mathbb{Q}$-algebra. Then weakly Tannakian categories are closed under arbitrary bicategorical colimits in the 2-category $\ca{RM}$ of right exact symmetric monoidal $R$-linear categories.
 \end{thm}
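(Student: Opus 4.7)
The plan is to apply the intrinsic characterization provided by Theorem~\ref{thm:description}. Let $D \colon \mathcal{J} \to \ca{RM}$ be a diagram of weakly Tannakian categories with bicolimit $\ca{A}$ and structure 1-cells $F_j \colon D(j) \to \ca{A}$. We need to verify that $\ca{A}$ is ind-abelian and that it satisfies conditions (i)--(iv) of Theorem~\ref{thm:description}; once this is done, the theorem is immediate.

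First I would verify condition (i): the images $F_j(Y)$ of dualizable generators in the $D(j)$ form a jointly generating family of $\ca{A}$ by the universal property of the bicolimit, and each such image is dualizable since right exact symmetric strong monoidal functors preserve duals. Thus every object of $\ca{A}$ admits an epimorphism from a finite direct sum of such images. To verify (iii), given $X \in \ca{A}$ with a dual, choose such an epimorphism $Y \twoheadrightarrow X$ with $Y = \bigoplus_k F_{j_k}(Y_k)$ where each $Y_k \in D(j_k)$ is dualizable and satisfies $\Lambda^{n_k} Y_k = 0$ (which exists by (iii) in $D(j_k)$); then the binomial decomposition of exterior powers of direct sums (valid in characteristic zero) yields $\Lambda^N Y = 0$ for $N$ sufficiently large. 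Since $\Lambda^N$ is the splitting of the idempotent $a/N!$ on the right exact functor $(-)^{\otimes N}$, it preserves the epimorphism $Y \twoheadrightarrow X$ between dualizable objects, so $\Lambda^N X = 0$. Condition (iv) follows by the same descent strategy applied to the duals, monomorphisms, and cokernels of condition (iv), all of which are preserved by the tensor functors $F_{j_k}$. For (ii), given $X$ dualizable with $\rk X = 0$, a reduction along an epimorphism from a direct sum of images together with condition (ii) in each $D(j_k)$ (where the kernel summands forced by $\rk X = 0$ descend to zero) yields $X = 0$.

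The main obstacle is verifying that $\ca{A}$ is ind-abelian, since no abelian structure is intrinsic to the bicolimit in $\ca{RM}$. My plan is to identify $\Ind(\ca{A})$ with the bicolimit of the $\Ind(D(j))$ in the 2-category $\ca{T}$, which is justified by the natural equivalence of Lemma~\ref{lemma:pregeometric_implies_finitary} translating bicolimits in $\ca{RM}$ of weakly Tannakian categories into bicolimits in $\ca{T}$ of geometric categories. It then suffices to show that bicolimits in $\ca{T}$ of Grothendieck abelian geometric categories along cocontinuous tensor functors remain Grothendieck abelian. Any such bicolimit can be built from coproducts, pushouts, and filtered bicolimits: coproducts of geometric categories are handled in \cite{SCHAEPPI_TENSOR}, pushouts are handled by Theorem~\ref{thm:pushouts}, and filtered bicolimits of Grothendieck abelian categories along left exact cocontinuous functors remain Grothendieck abelian by a standard argument (the filtered bicolimit is itself cocomplete with exact filtered colimits and inherits a generator from any of the $\Ind(D(j))$). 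With ind-abelianness and conditions (i)--(iv) established, Theorem~\ref{thm:description} certifies that $\ca{A}$ is weakly Tannakian.
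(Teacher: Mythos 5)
Your overall strategy --- verifying the hypotheses of Theorem~\ref{thm:description} for the bicolimit --- is the right one, and your treatment of Condition~(iii) (cover $X$ by a direct sum $Y$ of images of dualizable objects, kill $\Lambda^N Y$ by the binomial decomposition, and use that $\Lambda^N$ preserves epimorphisms) is a correct and pleasant variant of what the paper does. But several other steps have genuine gaps. The paper first reduces to \emph{filtered} bicolimits (finite bicolimits are already covered by Theorem~\ref{thm:pushouts} together with the coproduct result of \cite{SCHAEPPI_TENSOR}), and then uses Lemma~\ref{lemma:filtered_bicolimits_in_rm} and Lemma~\ref{lemma:fp_means_fp} to \emph{lift} data from the colimit back to a finite stage: every object with a dual, every epimorphism between objects with duals, and every equation between morphisms of $\ca{A}$ already lives in some $\ca{A}_i$. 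Your proposal never performs this reduction and instead argues by \emph{preservation} under the legs $F_j$, which goes in the wrong direction. This is fatal for Condition~(ii): knowing only that some $Y=\bigoplus_k F_{j_k}(Y_k)$ surjects onto $X$ gives no control over $\rk(X)$, and the phrase ``the kernel summands forced by $\rk X=0$ descend to zero'' is not an argument. What is needed is that $X$ itself is the image of a dualizable object at a finite stage, where the vanishing of the rank can be detected because $\End(I_{\ca{A}})=\colim_i\End(I_{\ca{A}_i})$ in the filtered case. The same problem affects Condition~(iv): an epimorphism $p\colon X\to I$ in $\ca{A}$ must first be lifted to an epimorphism at a finite stage before Condition~(iv) in $\ca{A}_i$ can be invoked; this is exactly what the finite presentability of the free right exact symmetric monoidal category on an epimorphism accomplishes in the paper. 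Even Condition~(i) (``the images of the legs generate, by the universal property'') requires justification for an arbitrary bicolimit; it is immediate only after the reduction to the filtered case, where the bicolimit is computed as in $\Cat$.

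The ind-abelianness step is also problematic. The transition functors are only right exact (they are inverse image functors of arbitrary morphisms of Adams stacks), so your appeal to ``filtered bicolimits of Grothendieck abelian categories along left exact cocontinuous functors'' does not apply, and without left exactness a filtered colimit of abelian categories need not be abelian: the kernel of a morphism computed at one stage need not map to the kernel at a later stage, so kernels in the colimit may fail to exist or to behave correctly. This is precisely why the paper verifies the intrinsic conditions of \cite[Definition~1.1]{SCHAEPPI_TENSOR} --- finitely many conditions on finite diagrams expressed with cokernels and zero morphisms --- at finite stages via Lemma~\ref{lemma:fp_means_fp}, rather than invoking an abstract closure statement for Grothendieck abelian categories. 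To repair your argument you would need to (a) make the reduction to filtered bicolimits explicit, (b) prove that filtered bicolimits in $\ca{RM}$ are computed as in $\Cat$, and (c) establish the finite presentability of the relevant free right exact symmetric monoidal categories so that dualizable objects, epimorphisms, ranks, and the ind-abelianness equations can all be traced back to a finite stage.
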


 The following lemma can be checked directly by explicitly computing filtered bicolimits of right exact symmetric monoidal categories. Since this is a bit tedious, we will give a proof that uses some 2-categorical machinery instead. 

\begin{lemma}\label{lemma:filtered_bicolimits_in_rm}
 Filtered bicolimits in the category $\ca{RM}$ of right exact symmetric monoidal $R$-linear categories are computed as in the category of small categories. In particular, the forgetful functor
\[
 \ca{RM} \rightarrow \Cat
\]
 preserves filtered bicolimits.
\end{lemma}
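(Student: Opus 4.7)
The plan is to verify that for a filtered diagram $D \colon J \rightarrow \ca{RM}$, the bicolimit $\ca{A}$ computed in $\Cat$ carries a canonical right exact symmetric monoidal $R$-linear structure for which the universal pseudococone is a filtered bicolimit in $\ca{RM}$. Two strategies are available. One can invoke the general theory of finitary 2-monads: all of the defining operations of a right exact symmetric monoidal $R$-linear category (tensor, unit, addition, zero object, finite colimits) and their coherence axioms have finite arity, so $\ca{RM}$ is the 2-category of pseudoalgebras for a finitary 2-monad on $\Cat$, and for such 2-monads the forgetful 2-functor to $\Cat$ creates filtered bicolimits by standard two-dimensional monad theory. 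Alternatively one argues by hand, which is what I would sketch here since it is self-contained.

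First I would transport the structure. Any object, morphism, or finite diagram in $\ca{A}$ comes from some stage $D(j)$ by filteredness, and any two such choices agree after passing to a sufficiently late stage. I would use this to define $\otimes$, $I$, the associator, unitors, symmetry, finite colimits, and the $R$-linear enrichment on $\ca{A}$ by lifting them from a stage, with well-definedness supplied by the uniqueness half of filteredness. Right exactness of the tensor product transfers because finite colimits commute with filtered colimits in $\Cat$, and the required coherence diagrams commute in $\ca{A}$ because they commute at each $D(j)$.

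Next I would verify the universal property. Given $\ca{B} \in \ca{RM}$, I would show that the canonical comparison functor from $\ca{RM}(\ca{A},\ca{B})$ to the category of pseudococones $D \Rightarrow \ca{B}$ in $\ca{RM}$ is an equivalence. On objects, a pseudococone consists of right exact symmetric strong monoidal functors $D(j) \rightarrow \ca{B}$ together with invertible monoidal compatibility 2-cells; by the universal property of $\ca{A}$ in $\Cat$ these data assemble into a unique functor $\ca{A} \rightarrow \ca{B}$, and its monoidal structure morphisms $\varphi_0$ and $\varphi_{A,B}$ are produced by lifting the corresponding data at each finite stage, with the coherence axioms holding because they do at each stage. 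On morphisms, a monoidal natural transformation out of $\ca{A}$ is determined by its components at each $D(j)$, and the hexagon/pentagon/unit axioms descend by filteredness.

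The main obstacle is really a bookkeeping one: organizing the coherence data of pseudococones and of monoidal natural transformations so that the descent statements are transparent. This is precisely what the 2-monad approach hides, since it packages all coherence into the pseudoalgebra formalism once and for all and reduces the claim to the general principle that pseudoalgebras for a finitary 2-monad on $\Cat$ have their filtered bicolimits created by the forgetful 2-functor. In either approach the substantive input is the same: filtered colimits commute with finite limits (and with finite colimits) in $\Cat$.
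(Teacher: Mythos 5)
Your proposal is correct in substance but takes a genuinely different route from the paper. The paper explicitly declines the hands-on computation you sketch (calling it ``a bit tedious'') and instead factors the structure into layers: it first identifies $\ca{RM}$ with a slice 2-category over $\Mod_R^{\fp}$ to dispose of the $R$-linear enrichment (observing that $R$-linearity is a \emph{property} of a finitely cocomplete symmetric monoidal category with zero object admitting a right exact symmetric monoidal functor from $\Mod_R^{\fp}$, not extra structure), then treats the symmetric monoidal layer as a pseudomonad on finitely cocomplete pointed categories built from the Kelly tensor product --- whose closedness gives preservation of filtered bicolimits in each variable, whence the pseudomonad commutes with filtered bicolimits --- and finally cites Dubuc--Street for the base case that filtered bicolimits of finitely cocomplete categories are computed in $\Cat$. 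Your first suggested strategy (a single finitary 2-monad on $\Cat$ presenting all of $\ca{RM}$ at once, with the forgetful 2-functor creating filtered bicolimits of pseudoalgebras along pseudomorphisms) is close in spirit but asserts without proof that the whole package --- including the compatibility condition that tensoring is right exact in each variable, and the $R$-linearity --- assembles into one finitary 2-monad; this is true but is precisely the point the paper's Kelly-tensor-product formulation makes rigorous, and the creation statement for pseudoalgebras and pseudomorphisms (rather than strict ones) also deserves a citation. Your second, elementary strategy is self-contained and avoids these foundational commitments at the cost of the coherence bookkeeping you acknowledge; it would need to include, in effect, a reproof of the Dubuc--Street result that finite colimits in the colimit category are computed stagewise. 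Either route yields the lemma; the paper's buys brevity by outsourcing each layer to an existing result, while yours buys either maximal generality (the 2-monad route) or independence from the literature (the direct route).
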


\begin{proof}
 First note that the 2-category $\ca{RM}$ is equivalent to the slice of $\Mod_R^{\fp}$ over the 2-category of finitely cocomplete symmetric monoidal categories $\ca{A}$ with a zero object. Indeed, a symmetric monoidal right exact functor $\Mod^{\fp}_R \rightarrow \ca{A}$ in particular gives a morphism $R \rightarrow \ca{A}(I,I)$ of rings, and its existence implies that the binary coproduct of the unit with itself is a biproduct. Therefore such a category $\ca{A}$ is indeed $R$-linear in a unique way compatible with the monoidal structure.

 This reduces the problem to computing filtered bicolimits in the category of symmetric monoidal categories with finite colimits and zero object. Since the Kelly tensor product $-\boxtimes -$ of finitely cocomplete pointed categories is closed, it preserves filtered colimits in each variable. Using the fact that the diagonal of a filtered colimit is bifinal (this is readily checked using \cite[Proposition~A.2.2]{SCHAEPPI_STACKS}), it follows that the pseudomonad for finitely cocomplete symmetric monoidal pointed categories commutes with filtered bicolimits.

 From \cite[Theorems~2.4 and 2.5]{DUBUC_STREET} it follows that filtered bicolimits of categories with finite colimits and zero object (and functors between them that preserve finite colimits and zero objects) are computed as in the category of small categories.
\end{proof}

 The forgetful functor $\ca{RM} \rightarrow \Cat$ has a left biadjoint $F$. We call a right exact tensor category \emph{finitely presentable} if it is equivalent to a finite bicolimit of tensor categories of the form $F\ca{A}$ for a finite category $\ca{A}$.

\begin{example}\label{example:free_on_dual}
 The free right exact symmetric monoidal category on an object with a dual is finitely presentable. Indeed, this category can be obtained by taking the free right exact symmetric monoidal on two objects $A$ and $B$, then freely gluing in two morphisms $I \rightarrow B \otimes A$ and $A\otimes B \rightarrow I$, and then imposing the two triangle identities.
\end{example}

\begin{lemma}\label{lemma:fp_means_fp}
 If $\ca{A}$ is a finitely presentable right exact tensor category, then the 2-functor $\ca{RM}(\ca{A},-)$ preserves filtered bicolimits.
\end{lemma}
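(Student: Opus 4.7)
The plan is to reduce to the generating case $\ca{A}=F\ca{B}$ with $\ca{B}$ a finite category, and then chain together the biadjunction $F \dashv U$ with Lemma~\ref{lemma:filtered_bicolimits_in_rm} and the standard fact that in $\Cat$ filtered colimits commute with finite limits.

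First I would observe that the collection of 2-functors $\ca{RM}\to \Cat$ which preserve filtered bicolimits is closed under finite bilimits. This is because filtered bicolimits in $\Cat$ commute with finite bilimits (which at the level of small categories reduce to the classical fact that filtered colimits commute with finite limits in $\Set$, applied hom-wise and to isomorphism classes of objects). Since by definition $\ca{A}$ is a finite bicolimit of tensor categories of the form $F\ca{B}_i$ with $\ca{B}_i$ finite, we have an equivalence
\[
 \ca{RM}(\ca{A},-)\simeq \lim_i \ca{RM}(F\ca{B}_i,-)
\]
of 2-functors $\ca{RM}\to \Cat$, so it suffices to prove the statement when $\ca{A}=F\ca{B}$ for a single finite category $\ca{B}$.

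In this case the biadjunction $F \dashv U$ gives a pseudonatural equivalence
\[
 \ca{RM}(F\ca{B},-)\simeq \Cat(\ca{B}, U-)\smash{\rlap{.}}
\]
By Lemma~\ref{lemma:filtered_bicolimits_in_rm}, the forgetful functor $U$ preserves filtered bicolimits, so it remains to show that $\Cat(\ca{B},-)\colon \Cat \to \Cat$ preserves filtered bicolimits whenever $\ca{B}$ is a finite category. This is again the fact that filtered colimits in $\Cat$ commute with finite limits: for a finite category $\ca{B}$, the category $\Cat(\ca{B},\ca{C})$ can be constructed from $\ca{C}$ using only finitely many finite limits (a finite product indexed by objects of $\ca{B}$, together with finitely many equalizer-like constraints imposed by the morphisms of $\ca{B}$ and the naturality conditions).

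The main obstacle is bookkeeping around the bicategorical (as opposed to strict) nature of the colimits involved: one must be careful that the pseudonatural equivalences above really do respect the filtered bicolimit structure. This is handled by appealing, as in the proof of Lemma~\ref{lemma:filtered_bicolimits_in_rm}, to the results of \cite{DUBUC_STREET} which guarantee that filtered bicolimits of small categories are computed as filtered pseudocolimits, and hence interact with finite bilimits in the expected way.
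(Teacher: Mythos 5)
Your proof is correct and follows essentially the same route as the paper: reduce via commutation of filtered bicolimits with finite bilimits in $\Cat$ to the case $\ca{A}=F\ca{B}$ with $\ca{B}$ finite, then conclude by the biadjunction $F\dashv U$ together with Lemma~\ref{lemma:filtered_bicolimits_in_rm}. You simply spell out more of the details (in particular why $\Cat(\ca{B},-)$ preserves filtered bicolimits for finite $\ca{B}$) that the paper leaves implicit.
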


\begin{proof}
 Since filtered bicolimits commute with finite bilimits in $\Cat$, we can reduce to the case where $\ca{A}$ is free on a finite category. The claim therefore follows by adjunction from Lemma~\ref{lemma:filtered_bicolimits_in_rm}.
\end{proof}

\begin{proof}[Proof of Theorem~\ref{thm:closure_under_colimits}]
 Since we already know that weakly Tannakian categories are closed under finite bicolimits, it suffices to show closure under filtered bicolimits. In other words, we need to check that the filtered colimit $\ca{A} \in \ca{RM}$ of weakly Tannakian categories $\ca{A}$ satisfies the conditions of Theorem~\ref{thm:intrinsic}. 

 Any object $A\in\ca{A}$ is isomorphic to an object in the image of one of the right exact symmetric strong monoidal $R$-linear functors $\ca{A}_i \rightarrow \ca{A}$. Thus there exists an object with dual $D$ and an epimorphism $D \rightarrow A$ (because the same is true in $\ca{A}_i$), so Condition~(i) holds.

 To see that the other conditions hold, note that any object with a dual in $\ca{A}$ is isomorphic to an image of an object \emph{with dual} under one of the functors $\ca{A}_i \rightarrow \ca{A}$. This follows from Lemma~\ref{lemma:fp_means_fp}, applied to the free right exact symmetric monoidal category on an object with a dual (which is finitely presentable by Example~\ref{example:free_on_dual}).  
 
 Since large exterior powers of objects with duals in $\ca{A}_j$ are zero, the same must be true in $\ca{A}$, so $\ca{A}$ satisfies Condition~(ii). Similarly, if for some object with dual $A \in \ca{A}$ the rank of the image in $\ca{A}$ is equal to zero, the same must be true in some $\ca{A}_j$. Since $\ca{A}_j$ is weakly Tannakian, this implies that the image of $A$ in $\ca{A}$ is zero. This shows that Condition~(iii) holds.

 To see that Condition~(iv) holds, note that the property of being an epimorphism can be checked via the equation asserting that the identity and the zero morphism of the cokernel are equal. This allows us to build a free right exact symmetric monoidal on an epimorphism as a finite bicolimit of the free right exact symmetric monoidal category on a single morphism.

 From Lemma~\ref{lemma:fp_means_fp} it follows that an epimorphism between objects with duals in $\ca{A}$  is isomorphic to the image of an epimorphism between duals under one of the functors $\ca{A}_i \rightarrow \ca{A}$. So Condition~(iv) must hold in $\ca{A}$ since it holds in $\ca{A}_i$.

 Finally, we need to check that $\ca{A}$ is ind-abelian. By \cite[Definition~1.1]{SCHAEPPI_TENSOR}, this amounts to checking that two conditions for finite diagrams, expressed using only cokernels and zero morphisms, hold in $\ca{A}$ if they hold in all the $\ca{A}_i$. This again follows readily from Lemma~\ref{lemma:fp_means_fp}.
\end{proof}

\bibliographystyle{amsalpha}
\bibliography{fiber}

\end{document}